\documentclass[onefignum,onetabnum]{siamart251216}

\usepackage{amsfonts,amssymb,mathtools}
\usepackage{booktabs}
\usepackage{enumitem}
\usepackage{graphicx}
\usepackage{microtype}
\usepackage{xcolor}
\usepackage{url}

\headers{Feedback Cycles in Exploratory Equilibria}{Chen-Hung Wu}

\title{Feedback Cycles in Exploratory Equilibria\thanks{\funding{This work received
no external funding.}}}
\author{Chen-Hung Wu\thanks{Independent Researcher, Redwood City, CA
  (\email{chenhung.wu1572@gmail.com},
  \url{https://orcid.org/0009-0008-1026-1083}).}}

\hypersetup{
  pdftitle={Feedback Cycles in Exploratory Equilibria},
  pdfauthor={Chen-Hung Wu},
  pdfsubject={Exploratory equilibria in time-inconsistent stochastic control},
  pdfkeywords={stochastic control, reinforcement learning, equilibrium HJB,
    entropy regularization, Volterra operator, feedback cycles}
}

\newsiamthm{assumption}{Assumption}
\newsiamremark{remark}{Remark}
\newsiamremark{example}{Example}
\AddToHook{env/assumption/begin}{\crefalias{theorem}{assumption}}
\crefname{assumption}{Assumption}{Assumptions}
\Crefname{assumption}{Assumption}{Assumptions}

\newcommand{\A}{\mathsf A}
\newcommand{\Pp}{\mathbb P}
\newcommand{\E}{\mathbb E}
\newcommand{\R}{\mathbb R}
\newcommand{\KL}{\operatorname{KL}}
\newcommand{\osc}{\operatorname{osc}}
\newcommand{\cG}{\mathcal G}
\newcommand{\cQ}{\mathcal Q}
\newcommand{\cK}{\mathcal K}
\newcommand{\cS}{\mathcal S}
\newcommand{\cX}{\mathcal X}
\newcommand{\cY}{\mathcal Y}
\newcommand{\norm}[1]{\left\lVert #1\right\rVert}

\begin{document}
\maketitle

\begin{abstract}
Entropy regularization smooths equilibrium policies in time-inconsistent stochastic
control.  At low temperature, the same Gibbs response can strongly amplify errors in
learned rewards and dynamics.  We show that the derivative of an exploratory equilibrium
is governed by a backward Volterra--parabolic resolvent.  Along an aligned positive
mode, a lower bound has the same exponential order.  A block
decomposition identifies the source of the amplification: causal paths contribute powers
of $1/\tau$, whereas a positive feedback cycle can produce exponential growth.

At fixed temperature, a local equilibrium branch is twice differentiable with respect to
finite-dimensional model parameters, which yields a function-valued delta method.  A
bounded uniformly elliptic diffusion realizes this path--cycle distinction in every
finite dimension.  Closing one positive cycle changes the root-$n$ linear-response boundary from
a power law to order $1/\log n$; along the cyclic Perron mode, right-endpoint
discretization is relatively consistent exactly when $N\tau^2\to\infty$.  An affine model
also gives an exact nonlinear transition at the Lambert-$W$ temperature
$\beta T/W(\beta T\sqrt n)$.  Numerical calculations illustrate these rates.
\end{abstract}

\begin{keywords}
time-inconsistent stochastic control, reinforcement learning, exploratory equilibrium
HJB equation, entropy regularization, Volterra operator, feedback cycles, statistical
conditioning, Mittag--Leffler function
\end{keywords}

\begin{AMS}
93E20, 49L20, 68T05, 60H30, 65M12, 62M05
\end{AMS}

\section{Introduction}\label{sec:introduction}

Entropy regularization replaces a point action by a relaxed policy and turns policy
improvement into a Gibbs map.  In continuous-time reinforcement learning (RL), this
gives a smooth Hamiltonian and a tractable policy iteration.  Convergence results for
time-consistent exploratory HJB equations include
\cite{tang2022exploratory_hjb_sicon,huang2025convergence,tran2025policy,
ma2025convergence}.  Entropy annealing and robustness have also been studied for
time-consistent continuous-time control
\cite{SethiSiskaZhang2025,CaoAceroSiskaZhang2026}.

For a time-inconsistent objective, a policy chosen at $(t,x)$ need not remain optimal
at a later time.  One instead seeks an equilibrium among temporal selves, described by
an extended HJB system
\cite{Yong2012,Bjork2017,BjorkKhapkoMurgoci2021,lei_nonlocal_2023,
lei_nonlocality_2024}.  That system is nonlocal: the policy at time $t$ depends on a
diagonal derivative of an auxiliary value function whose reference variables are frozen
at $(t,x)$.

Huang, Yu, and Zhang~\cite{HuangYuZhang2026} recently proposed an exploratory
equilibrium HJB (EEHJB) system for a broad class of these problems, proved
fixed-temperature convergence of policy iteration, and constructed a classical
solution.  A related vanishing-entropy result obtains unregularized equilibria from
subsequential EEHJB limits \cite{WangYuZhangZhou2026}.  These works address
well-posedness, fixed-model policy iteration, and the zero-temperature limit, but not
how learned-model or policy-evaluation errors are amplified as the temperature falls.
A fixed-temperature iteration can converge quickly while its equilibrium is poorly
conditioned.

Our analysis is self-contained relative to that well-posedness theory.  We state the
fixed-point system and its data hypotheses in
\cref{sec:model,ass:eehjb-data,ass:eehjb-envelope}, construct the mild evaluation map
and local equilibrium branch directly, and solve the explicit models separately.

Adjacent perturbation theory covers time-consistent relaxed control
\cite{reisinger2021regularity} and equilibrium-induced values for time-inconsistent
stopping \cite{BayraktarWangZhou2023,BayraktarWangZhou2022}.  Nonlocal equilibrium HJB
equations, equilibrium dynamic programming, and related BSDE representations are
studied in \cite{lei2023wellposedness,PossamaiRodriguezPolo2026}.  Relaxed equilibria
and sample-based learning for time-inconsistent control and decision problems appear in
\cite{BayraktarHuangWangZhou2025,dai2023learning,LesmanaPun2025,GuoHuangYu2026}.
None of these results gives a temperature-explicit derivative for a continuous-time
exploratory control equilibrium and propagates it through a model estimator.

The organizing point is that low-temperature conditioning is governed not only by the
local $1/\tau$ Gibbs factor, but also by the topology and smoothing of causal returns.
A future policy changes the continuation score of every earlier self.  In discrete time
this derivative is strictly triangular.  In continuous time it becomes a backward
Volterra operator with infinitely many nonzero causal powers.  The series converges for
each $\tau>0$, but its norm may grow exponentially as $\tau\downarrow0$.

Treating future decision nodes as agents with logit responses goes back to agent
quantal response equilibrium in extensive-form games \cite{McKelveyPalfrey1998}.
Static graph QRE can also exhibit Ising-type phase changes
\cite{LeonidovSavvateevSemenov2019}.  The analogy here is limited: temporal selves form
a continuum coupled by a controlled diffusion, and the directed graph describes a
causal derivative rather than equilibrium multiplicity.

Let $\alpha$ denote the smoothing order of the causal kernel.  The equilibrium derivative
is bounded by $C\tau^{-1}E_\alpha(\kappa T^\alpha/\tau)$.  If the forcing and return
kernel preserve an aligned sign, the lower bound has the same small-temperature
exponent.  Without that sign condition, negative feedback may damp the response.

The block form shows where repeated amplification enters.  On a directed acyclic graph,
the Neumann series stops at the longest path.  Under aligned forcing, a positive
$q$-cycle with total smoothing order $\beta$ contributes the lower factor
$E_\beta(C/\tau^q)$ and forces divergence below the scale
$(\log n)^{-\beta/q}$.  When the corresponding one-step upper bound matches the cycle
return, this scale is the linear-response boundary.

To connect conditioning with estimation, we treat a finite-dimensional coefficient
family at fixed $\tau>0$.  The mild EEHJB evaluation map is $C^2$, its causal derivative
is invertible, and the resulting equilibrium branch satisfies a function-valued delta
method.  The parabolic argument gives the general upper scale $e^{C/\tau^2}$; whether a
state-dependent model attains this rate remains open.

Two exact models show what the causal rates mean.  In a bounded uniformly elliptic
diffusion of arbitrary finite dimension, the tied-policy response is
\[
 \frac v\tau\exp\left\{\left(-\nu I+\frac v\tau K\right)(T-t)\right\}.
\]
For $K\ge0$, a graph of longest path $L$ has susceptibility
$\Theta(\tau^{-(L+1)})$, while a positive cycle produces an $e^{C/\tau}$ factor
along a Perron direction.  The model also has a nonlinear counterpart: exponentially
small signals vanish on a DAG, whereas cooperative feedback selects extreme actions
under the positive-row-sum, support, and short-horizon conditions of
\cref{cor:nonlinear-graph}.  In the affine model,
root-$n$ reward noise has the exact critical temperature
\[
 \tau_n^\star=\frac{\beta T}{W(\beta T\sqrt n)}
       \sim\frac{2\beta T}{\log n}.
\]
Here $W$ denotes the principal real branch of the Lambert $W$ function.
At this scale the initial policy has a nondegenerate random limit although every fixed
later policy converges to the reference mixture.

Time discretization retains this graph dependence.  Along a cyclic Perron mode, a
uniform $N$-step rule has vanishing relative error exactly when
$N\tau^2\to\infty$.  For a fixed DAG, $N\to\infty$ suffices without coupling $N$ to
$\tau$.  The general stability and delta-method results keep $\tau>0$ fixed; joint
small-temperature limits are stated only when an explicit bound or exact model supports
them.

The paper follows this causal chain.  \Cref{sec:model} derives the Gibbs fixed point
from infinitesimal spikes.  \Cref{sec:volterra} proves the abstract resolvent and graph
dichotomy.  \Cref{sec:stability} establishes EEHJB stability and the fixed-temperature
delta method.  \Cref{sec:exact} realizes the sharp path--cycle and nonlinear statistical
transitions, and \cref{sec:discretization} closes with discretization and reproducible
numerical illustrations.

\section{Time-inconsistent relaxed control and the EEHJB score map}
\label{sec:model}

Fix a horizon $T>0$, state dimension $d$, and a compact metric action space $\A$ equipped with
a reference probability measure $\mu$ of full support.  A Markov relaxed policy is a
kernel $\pi(t,x,da)$ absolutely continuous with respect to $\mu$.  Put
\[
 \bar b^\pi(t,x)=\int_\A b(t,x,a)\pi(t,x,da),
 \qquad
 \bar r_{\tau,y}^\pi(t,x)=\int_\A r(\tau,y,t,x,a)\pi(t,x,da).
\]
Finite action spaces are included: if $\mu$ gives positive mass to each action, every
policy has a density with respect to $\mu$, and the integrals and relative entropy
below are finite sums.
For a known, uncontrolled diffusion matrix $\sigma(t,x)$, the exploratory state is
\begin{equation}\label{eq:state-sde}
 dX_s^\pi=\bar b^\pi(s,X_s^\pi)\,ds+\sigma(s,X_s^\pi)\,dW_s,
 \qquad X_t^\pi=x.
\end{equation}
The self whose reference time-state is $(\tau,y)$ evaluates a continuation from $(t,x)$
by
\begin{align}
 V^\pi(\tau,t,y,x)
  :=\E_{t,x}\bigg[&\int_t^T
   \Big\{r(\tau,y,s,X_s^\pi,A_s)
       -\tau_e\log\frac{d\pi(s,X_s^\pi)}{d\mu}(A_s)\Big\}\,ds\notag\\
       &+F(\tau,y,X_T^\pi)\bigg].                     \label{eq:aux-value}
\end{align}
Here, conditionally on the state, $A_s$ has distribution $\pi(s,X_s^\pi,\cdot)$.
We write the entropy temperature as $\tau_e>0$ temporarily to distinguish it from the
reference-time variable; from \cref{sec:volterra} onward it is denoted simply by
$\tau$.  Nonexponential discounting can be absorbed into $r$.

The policy-evaluation equation for a fixed admissible $\pi$ is the family of linear
parabolic PDEs
\begin{align}
 \partial_tV^\pi+\tfrac12\operatorname{tr}
  (\sigma\sigma^\top D^2_xV^\pi)
 +\bar b^\pi\cdot D_xV^\pi+\bar r_{\tau,y}^\pi
 -\tau_e\KL(\pi(t,x)\Vert\mu)&=0,                     \label{eq:policy-evaluation-pde}\\
 V^\pi(\tau,T,y,x)&=F(\tau,y,x).                      \notag
\end{align}
The variables $(\tau,y)$ are frozen parameters in this PDE.  The acting self at $(t,x)$
sets $(\tau,y)=(t,x)$ but uses the \emph{flow-state partial derivative}
$D_xV^\pi(t,t,x,x)$, not the total derivative of the diagonal map.  Its action score is
\begin{equation}\label{eq:score-map}
 Q^\pi(t,x,a)=b(t,x,a)\cdot D_xV^\pi(t,t,x,x)
                 +r(t,x,t,x,a).
\end{equation}
The Gibbs variational identity gives the unique pointwise best response
\begin{equation}\label{eq:gibbs-map}
 \cG_{\tau_e}(q)(da)
 =\frac{\exp(q(a)/\tau_e)\mu(da)}
        {\int_\A\exp(q(a')/\tau_e)\mu(da')}.
\end{equation}
Define the best-response operator
\begin{equation}\label{eq:equilibrium-fixed-point}
 \mathcal T_{\tau_e,M}(\pi):=\cG_{\tau_e}(\cQ_M(\pi)),
 \qquad M=(b,r,F).
\end{equation}
A regularized Markov equilibrium is a fixed point
$\pi^\star=\mathcal T_{\tau_e,M}(\pi^\star)$; when it is unique we write
$\Phi_{\tau_e}(M)=\pi^\star$.  Here $\cQ_M$ means: solve
\eqref{eq:policy-evaluation-pde} and form \eqref{eq:score-map}.  This is a compact
score-map representation of the EEHJB system.
The stability section adds a terminal statistic and a nonlinear function $G$ and
states that extended system explicitly.

The next result anchors this fixed-point formulation in the infinitesimal-spike
equilibrium criterion.  Its local hypotheses justify the first-order spike limit; the
conclusion identifies the Gibbs fixed point as exactly the no-profitable-spike condition.

\begin{proposition}[Infinitesimal spike characterization]
\label{prop:spike-verification}
Fix $(t,x)\in[0,T)\times\R^d$.  Let $\rho\ll\mu$ satisfy
$\KL(\rho\Vert\mu)<\infty$, and set
\[
 \pi^{\epsilon,\rho}(s,z)=
 \begin{cases}
  \rho, &t\le s<t+\epsilon,\\
  \pi(s,z), &t+\epsilon\le s\le T.
 \end{cases}
\]
Let $J_{t,x}(\eta):=V^\eta(t,t,x,x)$ denote the payoff of the self whose
reference variables are frozen at $(t,x)$.  Suppose that, for some $\delta>0$,
$u(s,z):=V^\pi(t,s,x,z)$ belongs to
$C^{1,2}([t,t+\delta]\times\R^d)$ and satisfies
\eqref{eq:policy-evaluation-pde} on this strip.  Assume that $b$, $\sigma$, and $r$ are
continuous at $(t,x)$, uniformly in $a\in\A$, that the spike SDE is locally
well posed, and that $u$ and its derivatives in It\^o's formula have
polynomial growth controlled by corresponding moments of the spike process.
Assume also that
\[
 (s,z)\longmapsto
 \big(\bar b^\pi(s,z),\bar r_{t,x}^\pi(s,z),
       \KL(\pi(s,z)\Vert\mu)\big)
\]
is right-continuous at $(t,x)$.  Assume $\KL(\pi(t,x)\Vert\mu)<\infty$,
$Q^\pi(t,x,\cdot)\in C(\A)$, and, for some $\epsilon_0\in(0,\delta]$, the entire
bracketed integrand in \eqref{eq:exact-spike-difference} is uniformly integrable over
$0<\epsilon<\epsilon_0$ and $s\in[t,t+\epsilon]$.  Then
\begin{equation}\label{eq:spike-gain}
 \begin{aligned}
  \lim_{\epsilon\downarrow0}
  \frac{J_{t,x}(\pi^{\epsilon,\rho})-J_{t,x}(\pi)}{\epsilon}
  &=\Gamma_{t,x}^\pi(\rho)-\Gamma_{t,x}^\pi(\pi(t,x)),\\
  \Gamma_{t,x}^\pi(\rho)
  &=\int_\A Q^\pi(t,x,a)\rho(da)-\tau_e\KL(\rho\Vert\mu).
 \end{aligned}
\end{equation}
If these hypotheses hold for every finite-entropy $\rho$ at each point, then $\pi$
admits no infinitesimal spike with positive limit in \eqref{eq:spike-gain} if and only
if it satisfies \eqref{eq:equilibrium-fixed-point}.  The maximizer is unique up to
$\mu$-null sets.
\end{proposition}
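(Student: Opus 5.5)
The plan is to produce an exact representation of the spike difference, divide by $\epsilon$, and pass to the limit. After that, the equivalence with the fixed point follows from the Gibbs variational identity.

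\emph{Exact spike difference.} Both payoffs are evaluated by the self frozen at $(t,x)$. After time $t+\epsilon$ the spiked policy coincides with $\pi$, so the Markov property gives
\[
J_{t,x}(\pi^{\epsilon,\rho})=\E_{t,x}\Big[\int_t^{t+\epsilon}\big\{\bar r^\rho_{t,x}(s,X^\rho_s)-\tau_e\KL(\rho\Vert\mu)\big\}ds+u(t+\epsilon,X^\rho_{t+\epsilon})\Big],
\]
where $X^\rho$ is the spike process on $[t,t+\epsilon]$. Conditioning on the state and averaging the log-density under $A_s\sim\rho$ turns the entropy term into $\KL(\rho\Vert\mu)$.

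Next apply It\^o's formula to $u(s,X^\rho_s)$ on $[t,t+\epsilon]\subset[t,t+\delta]$. The $C^{1,2}$ regularity and the polynomial-growth and moment hypotheses make the stochastic integral a true martingale, after localization if needed. Then substitute $\partial_s u$ from \eqref{eq:policy-evaluation-pde}, written under $\pi$ with $(\tau,y)=(t,x)$ frozen. The second-order terms cancel, because $\sigma$ is uncontrolled. This leaves
\[
J_{t,x}(\pi^{\epsilon,\rho})-J_{t,x}(\pi)=\E\int_t^{t+\epsilon}\Big[(\bar b^\rho-\bar b^\pi)\cdot D_zu+\bar r^\rho_{t,x}-\bar r^\pi_{t,x}-\tau_e\KL(\rho\Vert\mu)+\tau_e\KL(\pi(s,\cdot)\Vert\mu)\Big](s,X^\rho_s)\,ds,
\]
which is \eqref{eq:exact-spike-difference}.

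\emph{Limit.} Divide by $\epsilon$. As $s\downarrow t$, the spike process satisfies $X^\rho_s\to x$ in probability, by local well-posedness and continuity of paths. The integrand then converges pointwise in probability to
\[
(\bar b^\rho-\bar b^\pi)(t,x)\cdot D_xV^\pi(t,t,x,x)+\bar r^\rho_{t,x}(t,x)-\bar r^\pi_{t,x}(t,x)-\tau_e\KL(\rho\Vert\mu)+\tau_e\KL(\pi(t,x)\Vert\mu).
\]
Three facts give this convergence. Continuity of $b$ and $r$ in $(s,z)$, uniform in $a$, gives convergence of the $\rho$-averages, since $\rho$ is a fixed measure. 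Right-continuity of $(\bar b^\pi,\bar r^\pi_{t,x},\KL(\pi\Vert\mu))$ at $(t,x)$ handles the $\pi$-terms. Continuity of $D_zu$ handles the gradient.

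The assumed uniform integrability upgrades this to convergence of the averages $\epsilon^{-1}\E\int_t^{t+\epsilon}$, via Vitali's theorem. Rearranging the limit with the definition \eqref{eq:score-map} of $Q^\pi$ gives $\Gamma^\pi_{t,x}(\rho)-\Gamma^\pi_{t,x}(\pi(t,x))$. I expect this step to be the main obstacle, because the integrand combines the moving spike state with right-limits of $\pi$-quantities. The care lies in using right-continuity of $\pi$ only along $s\ge t$, and letting uniform integrability absorb the unbounded polynomial factors.

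\emph{Characterization.} Since $Q^\pi(t,x,\cdot)\in C(\A)$ and $\A$ is compact, $Q^\pi$ is bounded. The Donsker--Varadhan/Gibbs identity then reads
\[
\Gamma^\pi_{t,x}(\rho)=\tau_e\log\int_\A e^{Q^\pi/\tau_e}d\mu-\tau_e\KL\big(\rho\Vert\cG_{\tau_e}(Q^\pi(t,x,\cdot))\big)
\]
for every finite-entropy $\rho$. Hence $\Gamma^\pi_{t,x}$ is uniquely maximized, up to $\mu$-null sets by strict positivity of relative entropy, at the Gibbs measure.

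No spike has a positive limit at $(t,x)$ exactly when $\Gamma^\pi_{t,x}(\pi(t,x))\ge\Gamma^\pi_{t,x}(\rho)$ for all admissible $\rho$. By the identity above, this holds iff $\KL(\pi(t,x)\Vert\cG_{\tau_e}(Q^\pi))=0$, that is, iff $\pi(t,x)=\cG_{\tau_e}(Q^\pi(t,x,\cdot))$. Imposing this at every point yields \eqref{eq:equilibrium-fixed-point}, and the Gibbs measure is itself an admissible finite-entropy $\rho$, so the test class is rich enough.
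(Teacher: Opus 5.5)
Your proposal is correct and follows the paper's proof essentially step for step. The shared steps are the Markov decomposition at $t+\epsilon$, It\^o's formula combined with the policy-evaluation PDE so that the second-order terms cancel, passage to the limit via right-continuity and Vitali's theorem under the assumed uniform integrability, and the Gibbs identity $\Gamma^\pi_{t,x}(\rho)=\tau_e\log\int_\A e^{Q^\pi(t,x,a)/\tau_e}\mu(da)-\tau_e\KL(\rho\Vert\cG_{\tau_e}(Q^\pi(t,x,\cdot)))$. The only minor difference is that the paper gets short-time concentration from a stopped Burkholder--Davis--Gundy estimate on $\sup_{t\le s\le t+\epsilon}|X^{\epsilon,\rho}_s-x|$, whereas you use path continuity of a single $\rho$-driven process that coincides with every spike process on its spike interval, which works equally well here.
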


\begin{proof}
Let $X^{\epsilon,\rho}$ be the state process under the spike policy, and write
\begin{align*}
 \bar b^\rho(s,z)&=\int_\A b(s,z,a)\rho(da),&
 \bar r_{t,x}^\rho(s,z)&=\int_\A r(t,x,s,z,a)\rho(da),\\
 k^\pi(s,z)&=\KL(\pi(s,z)\Vert\mu).&&
\end{align*}
Since the policy returns to $\pi$ at $t+\epsilon$, the Markov property gives
\begin{align*}
 J_{t,x}(\pi^{\epsilon,\rho})
 =\E\bigg[&\int_t^{t+\epsilon}
 \big\{\bar r_{t,x}^\rho(s,X_s^{\epsilon,\rho})
       -\tau_e\KL(\rho\Vert\mu)\big\}\,ds\\
 &+u(t+\epsilon,X_{t+\epsilon}^{\epsilon,\rho})\bigg].
\end{align*}
For $0<\epsilon<\delta$, apply It\^o's formula to
$u(s,X_s^{\epsilon,\rho})$ and use the
policy-evaluation equation for $u$.  Because the diffusion matrix in
\eqref{eq:state-sde} is independent of the action, the second-order terms in
the two generators cancel exactly.  We obtain
\begin{align}
 J_{t,x}(\pi^{\epsilon,\rho})-u(t,x)
 =\E\int_t^{t+\epsilon}\bigg[&
 (\bar b^\rho-\bar b^\pi)\cdot D_zu
 +(\bar r_{t,x}^\rho-\bar r_{t,x}^\pi)\notag\\
 &-\tau_e\big\{\KL(\rho\Vert\mu)
              -k^\pi\big\}
 \bigg](s,X_s^{\epsilon,\rho})\,ds .
 \label{eq:exact-spike-difference}
\end{align}
Here $k^\pi$ is evaluated at $(s,X_s^{\epsilon,\rho})$, whereas
$\KL(\rho\Vert\mu)$ is constant on the spike interval.  A stopped
Burkholder--Davis--Gundy estimate, recorded in the supplement, gives
$\sup_{t\le s\le t+\epsilon}|X_s^{\epsilon,\rho}-x|\to0$ in probability.
Right-continuity identifies the limit of the bracketed integrand, and its assumed
uniform integrability permits passage to the limit by Vitali's theorem in
\eqref{eq:exact-spike-difference}.  Dividing by $\epsilon$ gives
\eqref{eq:spike-gain}, now with the baseline drift, reward, and entropy terms
displayed explicitly.

Set $Z=\int_\A e^{Q^\pi(t,x,a)/\tau_e}\mu(da)$.  Compactness of $\A$ and
continuity of $Q^\pi(t,x,\cdot)$ give $0<Z<\infty$.  If
$G=\cG_{\tau_e}(Q^\pi(t,x,\cdot))$, direct calculation yields
\[
 \Gamma_{t,x}^\pi(\rho)
 =\tau_e\log Z-\tau_e\KL(\rho\Vert G).
\]
Thus $G$ is the unique maximizer.  Requiring the derivative in
\eqref{eq:spike-gain} to be nonpositive for every finite-entropy $\rho$ is
equivalent to $\pi(t,x)=G$.  Applying this pointwise proves the fixed-point
equivalence.
\end{proof}

This equivalence concerns the local equilibrium criterion defined by
infinitesimal spikes.  It does not assert precommitment optimality or the
strong finite-spike equilibrium property.

A perturbation of $\pi(s,\cdot)$ at $s>t$ changes
$V^\pi(t,t,\cdot,\cdot)$, hence the score and policy at $t$.  Causality prevents a
policy perturbation at an earlier time from changing a later score.  Thus
$D_\pi\cQ_M$ is a backward Volterra operator.

\section{The causal Volterra resolvent}
\label{sec:volterra}

We first separate the causal argument from the PDE estimates.  Let
$X$ and $Y$ be Banach spaces for policy tangents and centered action scores at one
time, respectively, and put $\cX=C([0,T];X)$ and $\cY=C([0,T];Y)$ with supremum
norms.  In finite actions, $X$ is the product of simplex tangent spaces with the
$\ell^1$ norm and $Y$ carries the oscillation norm.  For a signed measure $\nu$, our total
variation convention is $\norm{\nu}_{\rm TV}=|\nu|(\A)$, so the distance between two
probability measures is at most two.
Let $\mathsf H$ be a Banach space of primitive model directions.

For $\alpha>0$, define the right-sided Riemann--Liouville integral
\begin{equation}\label{eq:fractional-integral}
 (I_{T-}^\alpha f)(t)
 :=\frac1{\Gamma(\alpha)}\int_t^T(s-t)^{\alpha-1}f(s)\,ds.
\end{equation}
We use the convention $I_{T-}^0=\mathrm{Id}$.  The fractional integrals satisfy
$I_{T-}^\alpha I_{T-}^\gamma=I_{T-}^{\alpha+\gamma}$ and, for every integer
$m\ge0$,
\begin{equation}\label{eq:fractional-constant}
 (I_{T-}^{m\alpha}{\bf1})(t)
 =\frac{(T-t)^{m\alpha}}{\Gamma(m\alpha+1)}.
\end{equation}
The Mittag--Leffler function is
\begin{equation}\label{eq:mittag-leffler}
 E_\alpha(z):=\sum_{m=0}^\infty\frac{z^m}{\Gamma(m\alpha+1)}.
\end{equation}
See \cite{Diethelm2010,Podlubny1999Mittag,GripenbergLondenStaffans1990} for the
fractional-integral and Volterra background.
For positive $z$ and $0<\alpha<2$,
$E_\alpha(z)=\alpha^{-1}e^{z^{1/\alpha}}+O(z^{-1})$; also
$E_1(z)=e^z$ and $E_2(z)=\cosh\sqrt z$.
For the block argument we also use
$\log E_\alpha(z)\sim z^{1/\alpha}$ on the positive axis for every $\alpha>0$.
A Stirling--Laplace proof of this all-orders relation is recorded in the supplement.

\begin{lemma}[Gibbs differential and global Lipschitz bound]
\label{lem:gibbs-differential}
For $q,h\in L^\infty(\mu)$ and $\tau>0$,
\begin{equation}\label{eq:gibbs-derivative}
 D\cG_\tau(q)[h](da)
 =\frac{\cG_\tau(q)(da)}{\tau}
  \left(h(a)-\int h\,d\cG_\tau(q)\right).
\end{equation}
Moreover, for all $q,\tilde q$,
\begin{equation}\label{eq:gibbs-global-lipschitz}
 \norm{\cG_\tau(q)-\cG_\tau(\tilde q)}_{\rm TV}
 \le \min\left\{2,\frac{\osc(q-\tilde q)}{\tau}\right\},
 \qquad
 \osc f:=\operatorname*{ess\,sup}_{\mu}f
              -\operatorname*{ess\,inf}_{\mu}f.
\end{equation}
Here $D\cG_\tau(q)$ is the Fr\'echet derivative into the finite signed measures
equipped with the total-variation norm.
\end{lemma}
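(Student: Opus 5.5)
We compute the derivative by differentiating the density and then get the Lipschitz bound by integrating it along a segment.

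\textbf{Derivative.} Write the density of the Gibbs measure as $g_q=e^{q/\tau}/Z(q)$ with $Z(q)=\int_\A e^{q/\tau}\,d\mu$. Since $\osc q<\infty$ and $\mu$ is a probability measure, $Z(q)\in(0,\infty)$. The map $q\mapsto e^{q/\tau}$ is Fr\'echet differentiable from $L^\infty(\mu)$ to $L^\infty(\mu)$, with derivative $h\mapsto \tau^{-1}e^{q/\tau}h$. This follows from the uniform estimate $|e^{x}-1-x|\le x^2e^{|x|}$ applied pointwise with $x=h/\tau$, which gives a remainder $O(\norm{h}_\infty^2)$ in $L^\infty$. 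Consequently $Z$ is differentiable with $DZ(q)[h]=\tau^{-1}\int e^{q/\tau}h\,d\mu$. The quotient rule then gives
\[
 Dg_q[h]=\frac{g_q}{\tau}\Big(h-\int h\,g_q\,d\mu\Big),
\]
which is \eqref{eq:gibbs-derivative}. This differentiability holds in $L^\infty(\mu)$. Because $\norm{f\,d\mu}_{\rm TV}=\norm{f}_{L^1(\mu)}\le\norm{f}_{L^\infty(\mu)}$, it transfers to Fr\'echet differentiability into signed measures with the total-variation norm.

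\textbf{Lipschitz bound.} The bound $2$ is immediate, because both $\cG_\tau(q)$ and $\cG_\tau(\tilde q)$ are probability measures. For the second bound, set $h=\tilde q-q$ and $q_\theta=q+\theta h$ for $\theta\in[0,1]$. The fundamental theorem of calculus applies to the $C^1$ curve $\theta\mapsto\cG_\tau(q_\theta)$ with values in the Banach space of signed measures, and gives
\[
 \norm{\cG_\tau(\tilde q)-\cG_\tau(q)}_{\rm TV}\le\sup_\theta\norm{D\cG_\tau(q_\theta)[h]}_{\rm TV}.
\]
Continuity of the derivative in $q$ follows from the same formulas. Alternatively, one can avoid the Banach-space version by integrating pointwise in $a$ and then applying Fubini.

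\textbf{Key estimate (main step).} We need, for any probability measure $\nu$,
\[
 \int\Big|h-\int h\,d\nu\Big|\,d\nu\le\osc h.
\]
This holds because $\int h\,d\nu$ lies in $[\operatorname{ess\,inf}h,\operatorname{ess\,sup}h]$, so $|h-\int h\,d\nu|\le\osc h$ holely $\nu$-a.e. Here we use that $\nu\ll\mu$, so $\mu$-essential bounds also hold $\nu$-a.e. Applying this with $\nu=\cG_\tau(q_\theta)$ gives $\norm{D\cG_\tau(q_\theta)[h]}_{\rm TV}\le\osc(h)/\tau$, and hence the claimed bound.

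A sharper constant such as $\osc/(2\tau)$ is available, but it is not needed for the stated result. The only delicate point is the Fr\'echet (rather than merely G\^ateaux) differentiability. It comes from the uniform remainder bound for the exponential on bounded sets, which is where the $L^\infty(\mu)$ hypothesis is used.
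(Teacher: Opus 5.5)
Your proof is correct and follows the paper's route. Like the paper, you differentiate the normalized exponential, integrate the derivative along the segment between $q$ and $\tilde q$, bound each slice by $\int|h-\int h\,d\nu|\,d\nu\le\osc h$, and take the bound $2$ from the diameter of the probability simplex under the $|\nu|(\A)$ convention; you also supply the Fr\'echet-differentiability and total-variation transfer details that the paper leaves implicit.
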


\begin{proof}
Differentiating the normalized exponential gives \eqref{eq:gibbs-derivative}.  For the
global bound, interpolate $q_u=\tilde q+u(q-\tilde q)$, integrate the derivative from
$u=0$ to one, and use
$\int|h-\int h\,d\rho|d\rho\le\osc h$.  The bound by two is the diameter of the
probability simplex in our convention.
\end{proof}

At a reference-tied model $M^\circ$, the equilibrium policy equals a reference
$\mu$ and every acting-self score is constant over actions.  For $h\in\mathsf H$,
let $c[h]\in\cY$ be the direct score derivative, with the future policy held fixed.
Thus $c:\mathsf H\to\cY$ is a bounded linear map.
Let $\cK:\cX\to\cY$ be the future-policy-to-current-score derivative and let
$\cS_t:Y\to X$ be the centered covariance operator in \eqref{eq:gibbs-derivative} at the
reference policy.  Its bounded pointwise lift $\cS:\cY\to\cX$ is
$(\cS y)(t)=\cS_t y(t)$.  The equilibrium tangent
$u=D_M\Phi_\tau(M^\circ)[h]$ formally
satisfies
\begin{equation}\label{eq:abstract-tangent}
 u(t)=\frac1\tau\cS_t\{c[h](t)+(\cK u)(t)\}.
\end{equation}

\begin{assumption}[Fractional causal influence]\label{ass:fractional-causal}
There are $\alpha\in(0,2]$, $\kappa\ge0$, and $C_c<\infty$ such that, for every
$v\in\cX$, every $h\in\mathsf H$, and every $t\in[0,T]$,
\begin{align}
 \norm{\cS_t(\cK v)(t)}_X
 &\le \kappa(I_{T-}^\alpha\norm{v(\cdot)}_X)(t),
                                                        \label{eq:fractional-kernel-bound}\\
 \sup_t\norm{\cS_tc[h](t)}_X&\le C_c\norm{h}_{\mathsf H}. \label{eq:direct-bound}
\end{align}
\end{assumption}

For a bounded Volterra kernel, $\alpha=1$.  A kernel that vanishes linearly on the
diagonal has $\alpha=2$.  The $D_x$ estimate for a uniformly elliptic parabolic
semigroup has the weak singularity $(s-t)^{-1/2}$, hence $\alpha=1/2$.

\begin{theorem}[Fractional Volterra susceptibility]\label{thm:fractional-resolvent}
Under \cref{ass:fractional-causal}, \eqref{eq:abstract-tangent} has a unique solution
for every $\tau>0$, given by the convergent causal resolvent
\begin{equation}\label{eq:volterra-neumann}
 u=\frac1\tau\sum_{m=0}^\infty
       \tau^{-m}(\cS\cK)^m\cS c[h].
\end{equation}
For $0\le t\le T$,
\begin{equation}\label{eq:mittag-bound}
 \norm{u(t)}_X
 \le\frac{C_c\norm{h}_{\mathsf H}}{\tau}
 E_\alpha\!\left(\frac{\kappa(T-t)^\alpha}{\tau}\right).
\end{equation}
In particular, as $\tau\downarrow0$, the upper bound has exponential scale
\begin{equation}\label{eq:mittag-asymptotic-scale}
 \frac1\tau\exp\left\{
   \kappa^{1/\alpha}(T-t)\tau^{-1/\alpha}\right\}.
\end{equation}
\end{theorem}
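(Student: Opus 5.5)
Write $\mathcal L:=\tau^{-1}\cS\cK:\cX\to\cX$ and $f:=\tau^{-1}\cS c[h]\in\cX$. Then \eqref{eq:abstract-tangent} reads $u=f+\mathcal L u$. The plan is to show that $\mathcal L$ is a quasi-nilpotent Volterra operator with explicit iterate bounds. From those bounds we get, in one stroke, convergence of the Neumann series \eqref{eq:volterra-neumann}, uniqueness, and the pointwise estimate \eqref{eq:mittag-bound}.

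\textbf{Iterate bound.} We use induction on $m$, with positivity and the semigroup property of $I_{T-}^\alpha$. The induction hypothesis is that for every $v\in\cX$ and $t\in[0,T]$,
\[
 \norm{(\mathcal L^m v)(t)}_X\le \tau^{-m}\kappa^m\bigl(I_{T-}^{m\alpha}\norm{v(\cdot)}_X\bigr)(t).
\]
The case $m=0$ is trivial. For the step, apply \eqref{eq:fractional-kernel-bound} to $\mathcal L^m v$. Then use monotonicity of $I_{T-}^\alpha$ on nonnegative functions, which holds because its kernel is nonnegative, together with $I_{T-}^\alpha I_{T-}^{m\alpha}=I_{T-}^{(m+1)\alpha}$.

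Specializing to $v=f$ and using \eqref{eq:direct-bound} gives $\norm{f(s)}_X\le C_c\norm{h}_{\mathsf H}/\tau$ for all $s$. Then \eqref{eq:fractional-constant} yields
\[
 \norm{(\mathcal L^m f)(t)}_X\le \frac{C_c\norm{h}_{\mathsf H}}{\tau}\,\frac{\kappa^m(T-t)^{m\alpha}}{\tau^m\Gamma(m\alpha+1)}.
\]
We also need each $\mathcal L^m f$ to lie in $\cX$, that is, to be continuous in time. This is implicit in the standing assumption that $\cK:\cX\to\cY$ and $\cS:\cY\to\cX$ are bounded.

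\textbf{Existence, uniqueness, bound.} Taking the supremum over $t$, the norm $\norm{\mathcal L^m}_{\cX\to\cX}$ is at most $(\kappa T^\alpha/\tau)^m/\Gamma(m\alpha+1)$. This sequence is summable because $\Gamma(m\alpha+1)$ grows superexponentially, so $\sum_m\norm{\mathcal L^m}<\infty$. Hence $\mathrm{Id}-\mathcal L$ is invertible on the Banach space $\cX$, with inverse $\sum_m\mathcal L^m$.

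\begin{itemize}[leftmargin=*]
\item \emph{Existence.} $u=\sum_m\mathcal L^m f$ is exactly \eqref{eq:volterra-neumann} and solves the tangent equation.
\item \emph{Uniqueness.} The difference $w$ of two solutions satisfies $w=\mathcal L^m w$ for every $m$, so $\norm{w}\le\norm{\mathcal L^m}\norm{w}\to0$.
\item \emph{Bound.} Summing the pointwise bounds term by term gives \eqref{eq:mittag-bound} directly from the definition \eqref{eq:mittag-leffler}.
\end{itemize}

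\textbf{Asymptotic scale.} Put $z=\kappa(T-t)^\alpha/\tau$. For $0<\alpha<2$ the stated expansion $E_\alpha(z)=\alpha^{-1}e^{z^{1/\alpha}}+O(z^{-1})$ applies. For $\alpha=2$ we have $E_2(z)=\cosh\sqrt z\le e^{\sqrt z}$. In both cases $E_\alpha(z)\le C e^{z^{1/\alpha}}$ for large $z$, and $z^{1/\alpha}=\kappa^{1/\alpha}(T-t)\tau^{-1/\alpha}$, which gives \eqref{eq:mittag-asymptotic-scale}. The degenerate cases $\kappa=0$ or $t=T$ reduce to the bound $C_c\norm{h}/\tau$.

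The main obstacle is essentially bookkeeping. First, the induction must be done on norms, relying on positivity of the fractional integral, rather than on operator compositions in $\cX$. Second, one must confirm the continuity and measurability needed for $s\mapsto\norm{v(s)}_X$ to be admissible in $I_{T-}^\alpha$. This is immediate for $v\in\cX$, since the norm of a continuous function is continuous.
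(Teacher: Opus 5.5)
Your proposal is correct and follows the paper's proof essentially step for step: the norm-level induction using positivity and the semigroup property of $I_{T-}^\alpha$, the summation of the resulting majorants into $E_\alpha$, uniqueness from the decay of the iterates $(\kappa T^\alpha/\tau)^m/\Gamma(m\alpha+1)$, and the positive-axis Mittag--Leffler asymptotic for the exponential scale. The only difference is cosmetic: you package existence and uniqueness as invertibility of $\mathrm{Id}-\mathcal L$ through an operator-norm Neumann series, whereas the paper sums the majorant for the specific forcing $\cS c[h]$ and iterates the homogeneous equation separately.
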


\begin{proof}
Write $A=\cS\cK$.  Repeated use of
\eqref{eq:fractional-kernel-bound}, the semigroup identity for fractional integrals,
and \eqref{eq:fractional-constant} gives
\[
 \norm{A^m\cS c[h](t)}_X
 \le C_c\norm{h}_{\mathsf H}\,
 \frac{\kappa^m(T-t)^{m\alpha}}{\Gamma(m\alpha+1)}.
\]
The majorant series is finite for every $\tau>0$ and sums to
\eqref{eq:mittag-bound}; hence \eqref{eq:volterra-neumann} converges absolutely in
$\cX$ and solves \eqref{eq:abstract-tangent}.  If $z=\tau^{-1}Az$, then for every
$m\ge1$,
\[
 \|z(t)\|_X\le \|z\|_\infty
 \frac{(\kappa/\tau)^m(T-t)^{m\alpha}}{\Gamma(m\alpha+1)}.
\]
The right-hand side tends to zero, which proves uniqueness.  When $\kappa>0$ and
$t<T$, the positive-axis Mittag--Leffler asymptotic yields
\eqref{eq:mittag-asymptotic-scale}.
\end{proof}

The norm estimate in \cref{thm:fractional-resolvent} need not be sharp without a sign
condition.  For example, the scalar operator
\[
 (Av)(t)=-\kappa\int_t^T v(s)\,ds
\]
satisfies the order-one kernel bound, but the response to unit forcing is
$\tau^{-1}e^{-\kappa(T-t)/\tau}$.  Negative feedback damps the perturbation.  A
matching lower bound requires a direction in which the causal feedback keeps its sign.

Let $X_+\subset X$ be a closed convex pointed cone.  Write $x\preceq y$ when
$y-x\in X_+$ and use the pointwise order on $\cX$.  For policy tangents, $X_+$ may be
the ray generated by one action contrast rather than the usual order on signed
measures.

\begin{theorem}[Aligned-mode lower bound]
\label{thm:aligned-mode-lower}
Suppose the conditions of \cref{ass:fractional-causal} hold.  Set
\[
 A=\cS\cK,\qquad w_h(t)=\cS_tc[h](t).
\]
Suppose that for one oriented direction $h\in\mathsf H\setminus\{0\}$ there are constants $c_->0$ and
$0<\kappa_-\le\kappa$, a path $\psi\in C([0,T];X_+)$, and positive functionals
$\ell_t\in X^*$ with $\norm{\ell_t}_{X^*}\le1$ and
$q(t):=\ell_t(\psi(t))>0$.  Assume that these objects do not depend on $\tau$ and that
\begin{align}
 A\cX_+&\subset\cX_+,                                      \label{eq:A-positive}\\
 w_h(t)&\succeq c_-\norm{h}_{\mathsf H}\,\psi(t),          \label{eq:forcing-aligned}\\
 A(f\psi)(t)&\succeq
 \kappa_-\bigl(I_{T-}^{\alpha}f\bigr)(t)\psi(t)             \label{eq:mode-subeigen}
\end{align}
for every nonnegative $f\in C([0,T])$.  Then
\begin{equation}\label{eq:matched-mittag-bound}
 \frac{c_-q(t)\norm{h}_{\mathsf H}}{\tau}
 E_\alpha\!\left(\frac{\kappa_-(T-t)^\alpha}{\tau}\right)
 \le \norm{u(t)}_X
 \le
 \frac{C_c\norm{h}_{\mathsf H}}{\tau}
 E_\alpha\!\left(\frac{\kappa(T-t)^\alpha}{\tau}\right).
\end{equation}
For each fixed $t<T$,
\begin{align}
 \kappa_-^{1/\alpha}(T-t)
 &\le
 \liminf_{\tau\downarrow0}\tau^{1/\alpha}
 \log\frac{\tau\norm{u(t)}_X}{\norm{h}_{\mathsf H}} \notag\\
 &\le
 \limsup_{\tau\downarrow0}\tau^{1/\alpha}
 \log\frac{\tau\norm{u(t)}_X}{\norm{h}_{\mathsf H}}
 \le \kappa^{1/\alpha}(T-t).                              \label{eq:matched-log-rate}
\end{align}
If $\kappa_-=\kappa$, the upper exponential rate is attained.
\end{theorem}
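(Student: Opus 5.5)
The upper half of \eqref{eq:matched-mittag-bound} is exactly \eqref{eq:mittag-bound} from \cref{thm:fractional-resolvent}, so only the lower bound needs work. The plan is to bound the Neumann series \eqref{eq:volterra-neumann} from below term by term in the cone order, then test the result against $\ell_t$. Write $u=\tau^{-1}\sum_{m\ge0}\tau^{-m}A^m w_h$, which converges in $\cX$ by \cref{thm:fractional-resolvent}.

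\textbf{Lower bound on each term.} By induction on $m$ I will show
\[
 A^m w_h(t)\succeq c_-\norm{h}_{\mathsf H}\,\kappa_-^m\,
 \frac{(T-t)^{m\alpha}}{\Gamma(m\alpha+1)}\,\psi(t).
\]
The case $m=0$ is \eqref{eq:forcing-aligned}. For the inductive step, write $A^m w_h=f_m\psi+r_m$ with $r_m\in\cX_+$, where $f_m(t)=c_-\norm{h}\kappa_-^m(T-t)^{m\alpha}/\Gamma(m\alpha+1)$ is nonnegative and continuous. Then
\[
 A^{m+1}w_h=A(f_m\psi)+Ar_m .
\]
We have $Ar_m\in\cX_+$ by \eqref{eq:A-positive}. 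Also $A(f_m\psi)\succeq\kappa_-(I_{T-}^\alpha f_m)\psi$ by \eqref{eq:mode-subeigen}. The semigroup identity together with \eqref{eq:fractional-constant} gives $I_{T-}^\alpha f_m=f_{m+1}$. Since $\cX_+$ is a convex cone, these inequalities add, which closes the induction. Linearity of $A$ on differences is what lets the ``remainder in the cone'' bookkeeping work.

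\textbf{Summation and evaluation.} Partial sums of the series satisfy
\[
 \tau^{-1}\sum_{m\le M}\tau^{-m}A^m w_h\succeq \tau^{-1}\Bigl(\sum_{m\le M}\tau^{-m}f_m\Bigr)\psi .
\]
Because $X_+$ is closed and the series converges in $\cX$, I can pass to the limit to get
\[
 u(t)\succeq \tau^{-1}c_-\norm{h}\,E_\alpha\!\bigl(\kappa_-(T-t)^\alpha/\tau\bigr)\psi(t).
\]
Apply the positive functional $\ell_t$, using $\norm{\ell_t}\le1$:
\[
 \norm{u(t)}_X\ge\ell_t(u(t))\ge\tau^{-1}c_-\norm{h}\,E_\alpha(\cdot)\,q(t).
\]
This is the left half of \eqref{eq:matched-mittag-bound}.

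\textbf{Log rates.} Multiply \eqref{eq:matched-mittag-bound} by $\tau/\norm{h}$, take logarithms and multiply by $\tau^{1/\alpha}$. The constants $\log(c_-q(t))$ and $\log C_c$ are independent of $\tau$, so they vanish after multiplication by $\tau^{1/\alpha}$. Next, $\log E_\alpha(z)\sim z^{1/\alpha}$ as $z\to\infty$, valid for all $\alpha>0$ including $\alpha=2$, where $E_2(z)=\cosh\sqrt z$. With $z=\kappa_\pm(T-t)^\alpha/\tau$ this gives $\tau^{1/\alpha}\log E_\alpha(z)\to\kappa_\pm^{1/\alpha}(T-t)$ for fixed $t<T$, and \eqref{eq:matched-log-rate} follows. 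When $\kappa_-=\kappa$ the liminf and limsup coincide, so the upper rate is attained.

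\textbf{Main obstacle.} The main care point is the order-theoretic step. It needs the cone to be closed, so that the order survives the infinite limit, and convex, so that inequalities add. It also uses the independence of $\psi$, $\ell_t$ and $c_-$ from $\tau$, which is what keeps the log-rate constants harmless.
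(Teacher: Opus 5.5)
Your proposal is correct and follows the paper's proof essentially step for step. Both arguments induct on the Neumann terms $A^m w_h$ in the cone order using \eqref{eq:A-positive} and \eqref{eq:mode-subeigen}, pass to the limit by closedness of the cone, test with $\ell_t$, and use $\log E_\alpha(z)\sim z^{1/\alpha}$ for the rates. One bookkeeping slip: your $f_m$ already carries the factor $\kappa_-^m$, so the identity you need is $\kappa_- I_{T-}^\alpha f_m=f_{m+1}$, not $I_{T-}^\alpha f_m=f_{m+1}$.
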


\begin{proof}
Let $f_m=I_{T-}^{m\alpha}{\bf1}$.  Induction gives
\begin{equation}\label{eq:positive-iterate-lower}
 A^mw_h(t)\succeq
 c_-\norm{h}_{\mathsf H}\,\kappa_-^m f_m(t)\psi(t),\qquad m\ge0.
\end{equation}
The case $m=0$ is \eqref{eq:forcing-aligned}.  If it holds at $m$, positivity of $A$
and \eqref{eq:mode-subeigen} give
\[
 A^{m+1}w_h
 \succeq c_-\norm{h}_{\mathsf H}\,\kappa_-^{m+1}
 I_{T-}^{\alpha}f_m\,\psi
 =c_-\norm{h}_{\mathsf H}\,\kappa_-^{m+1}f_{m+1}\psi.
\]
Insert \eqref{eq:positive-iterate-lower} into \eqref{eq:volterra-neumann}.  Both series
converge uniformly and the cone is closed, hence
\[
 u(t)\succeq
 \frac{c_-\norm{h}_{\mathsf H}}{\tau}\psi(t)
 E_\alpha\!\left(\frac{\kappa_-(T-t)^\alpha}{\tau}\right).
\]
Apply $\ell_t$ and use $\norm{\ell_t}\le1$.  This proves the lower bound; the upper
bound is \eqref{eq:mittag-bound}.  The positive-axis Mittag--Leffler asymptotic gives
\eqref{eq:matched-log-rate}.
\end{proof}

\begin{corollary}[Fractional statistical resolution scale]
\label{cor:fractional-resolution}
Under the hypotheses of \cref{thm:aligned-mode-lower}, suppose
$\kappa_-=\kappa>0$.  Fix $t<T$, let
$h_n=n^{-1/2}h$, and denote its tangent response by $u_{\tau}[h_n]$.  Put
\[
 c_\alpha=2^\alpha\kappa(T-t)^\alpha.
\]
If $\tau_n(\log n)^\alpha\to c\in(0,\infty)$, then
\[
 \norm{u_{\tau_n}[h_n](t)}_X\longrightarrow
 \begin{cases}
  0,      &c>c_\alpha,\\
  \infty, &0<c<c_\alpha.
 \end{cases}
\]
Thus the root-$n$ linear resolution boundary has order $(\log n)^{-\alpha}$.
\end{corollary}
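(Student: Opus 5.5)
The argument uses linearity of the tangent and then squeezes it between the two Mittag--Leffler bounds of \cref{thm:aligned-mode-lower}.

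\textbf{Reduction.} Since \eqref{eq:abstract-tangent} is linear in $h$, we have $u_\tau[h_n]=n^{-1/2}u_\tau[h]$. The bounds \eqref{eq:matched-mittag-bound} with $\kappa_-=\kappa$ then give
\[
 \frac{c_-q(t)\norm{h}_{\mathsf H}}{\sqrt n\,\tau_n}E_\alpha(z_n)
 \le\norm{u_{\tau_n}[h_n](t)}_X\le
 \frac{C_c\norm{h}_{\mathsf H}}{\sqrt n\,\tau_n}E_\alpha(z_n),
 \qquad z_n:=\frac{\kappa(T-t)^\alpha}{\tau_n}.
\]
Both bounds share the same $n$-dependence, so it suffices to show that
$\Lambda_n:=\log E_\alpha(z_n)-\log\tau_n-\tfrac12\log n$ tends to $-\infty$ when $c>c_\alpha$ and to $+\infty$ when $c<c_\alpha$.

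\textbf{Asymptotics.} The hypothesis gives $\tau_n\sim c(\log n)^{-\alpha}$, hence $z_n\sim \kappa(T-t)^\alpha(\log n)^\alpha/c\to\infty$. I will use the relation $\log E_\alpha(z)\sim z^{1/\alpha}$ on the positive axis, valid for all $\alpha>0$ and in particular for $\alpha\in(0,2]$. With it,
\[
 \log E_\alpha(z_n)=(1+o(1))\,\kappa^{1/\alpha}(T-t)\,c^{-1/\alpha}\log n .
\]
The term $-\log\tau_n=\alpha\log\log n+O(1)$ is $o(\log n)$. Therefore
\[
 \Lambda_n=\log n\,\Bigl(\kappa^{1/\alpha}(T-t)c^{-1/\alpha}-\tfrac12+o(1)\Bigr).
\]
The bracket is negative exactly when $c^{1/\alpha}>2\kappa^{1/\alpha}(T-t)$, that is, when $c>2^\alpha\kappa(T-t)^\alpha=c_\alpha$. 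It is positive when $c<c_\alpha$. This gives the two limits $0$ and $\infty$, and the strict inequalities keep the leading coefficient bounded away from zero.

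\textbf{Main obstacle.} The delicate step is the asymptotic for $\log E_\alpha$. For $\alpha<2$ it follows directly from $E_\alpha(z)=\alpha^{-1}e^{z^{1/\alpha}}+O(z^{-1})$. For $\alpha=2$ it follows from $E_2(z)=\cosh\sqrt z$. Only the leading order of $\log E_\alpha$ matters, because the correction is a relative $o(1)$ term multiplying $\log n$, and the gap $|c^{-1/\alpha}-c_\alpha^{-1/\alpha}|>0$ absorbs it. The positivity conditions $q(t)>0$ and $c_->0$ are needed for the lower bound, and these objects are independent of $\tau$, so the prefactors are fixed constants.
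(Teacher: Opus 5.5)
Your proposal is correct and follows essentially the same route as the paper: you sandwich $\norm{u_{\tau_n}[h_n](t)}_X$ between the matched Mittag--Leffler bounds of \cref{thm:aligned-mode-lower}, apply the positive-axis asymptotic, and read off the sign of the coefficient of $\log n$, which changes at $c=c_\alpha$. The paper records the sharper remainder $O(\log\log n)$ where you use $o(\log n)$. That difference is immaterial, since converting $\tau_n^{-1/\alpha}$ into $c^{-1/\alpha}\log n$ already introduces an $o(\log n)$ error, and the strict gap $c\neq c_\alpha$ absorbs it.
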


\begin{proof}
By \eqref{eq:matched-mittag-bound} and the positive-axis asymptotic,
\[
 \log\norm{u_{\tau_n}[h_n](t)}_X
 =-\frac12\log n+\kappa^{1/\alpha}(T-t)\tau_n^{-1/\alpha}
 +O(\log\log n).
\]
The coefficient of $\log n$ is negative for $c>c_\alpha$ and positive for
$c<c_\alpha$.
\end{proof}

For $\kappa>0$ and fixed $t<T$, consider the canonical balance
\[
 \tau^{-1}E_\alpha\!\left(\frac{\kappa(T-t)^\alpha}{\tau}\right)=\sqrt n,
\]
and set $a=\kappa^{1/\alpha}(T-t)$.  With $W$ as above, its asymptotic solution is
\begin{equation}\label{eq:fractional-critical-temperature}
 \bar\tau_n=
 \left[
 \frac{a}{\alpha W\!\left(a\alpha^{1/\alpha-1}n^{1/(2\alpha)}\right)}
 \right]^\alpha\{1+o(1)\}
 \sim\left(\frac{2a}{\log n}\right)^\alpha.
\end{equation}
This is a linearized resolution boundary.  A nonlinear phase law also requires control
of the small-temperature remainder; \cref{thm:root-n-phase} supplies that control in
the affine model.

To identify the relevant return channels, decompose
$X=X_1\times\cdots\times X_d$, write
$A:=\cS\cK\in\mathcal L(\cX)$ and
$\mathsf C:=\cS c\in\mathcal L(\mathsf H,\cX)$, and set $A=(A_{ij})$.
Give $A$ the directed graph with an edge $j\to i$ when
$A_{ij}\ne0$.  Directed walks index the blocks of $A^m$
\cite{BermanPlemmons1994}, allowing the Neumann series to distinguish finite causal paths
from repeated cyclic returns.

\begin{theorem}[Block feedback dichotomy]
\label{thm:block-feedback-dichotomy}
Let $D_\tau h$ denote the solution of
\begin{equation}\label{eq:block-tangent}
 D_\tau h=\frac1\tau\{\mathsf C h+A D_\tau h\}.
\end{equation}
Thus $D_\tau:\mathsf H\to\cX$.  The block operators $A$ and $\mathsf C$, their graph,
and every cone, vector, functional, and
constant in the hypotheses below are independent of $\tau$.
\begin{enumerate}[label=\textup{(\roman*)},leftmargin=2em]
\item If the block graph is acyclic with longest directed path $L$, then
$A^{L+1}=0$ and
\begin{equation}\label{eq:dag-block-resolvent}
 D_\tau=\frac1\tau\sum_{m=0}^{L}\tau^{-m}A^m\mathsf C.
\end{equation}
If $\mathsf C=0$, then $D_\tau=0$.  Otherwise, let
$m_\star=\max\{m\le L:A^m\mathsf C\ne0\}$.  Then
\begin{equation}\label{eq:dag-block-order}
 \|D_\tau\|_{\mathcal L(\mathsf H,\cX)}=\tau^{-(m_\star+1)}
 \{\|A^{m_\star}\mathsf C\|_{\mathcal L(\mathsf H,\cX)}+o(1)\}.
\end{equation}
In particular, the order is $\tau^{-(L+1)}$ when the forcing excites a nonzero
longest path.

\item Suppose each $X_i$ has a closed pointed cone, every block $A_{ij}$ is positive,
and the causal Neumann series for $\tau^{-1}A$ converges in $\cX$ for every
$\tau>0$.  Consider a directed cycle
\[
 i_0\to i_1\to\cdots\to i_{q-1}\to i_0
\]
with return operator
\[
 B=A_{i_0i_{q-1}}A_{i_{q-1}i_{q-2}}\cdots A_{i_1i_0}.
\]
Assume $\mathsf C h_\star$ belongs to the product cone for some
$h_\star\in\mathsf H\setminus\{0\}$.  Suppose there
are $c_\star,g,\beta>0$, a path
$\psi\in C([0,T];X_{i_0,+})$, and positive functionals $\ell_t$ of norm at most one,
such that, for every nonnegative $f\in C([0,T])$,
\begin{align}
 (\mathsf C h_\star)_{i_0}(t)&\succeq
 c_\star\|h_\star\|_{\mathsf H}\psi(t),              \label{eq:cycle-forcing}\\
 B(f\psi)(t)&\succeq
 g(I_{T-}^{\beta}f)(t)\psi(t),                        \label{eq:cycle-return}
\end{align}
and $\varrho(t):=\ell_t(\psi(t))>0$.  Then, for $t<T$,
\begin{equation}\label{eq:cycle-mittag-lower}
 \|(D_\tau h_\star)_{i_0}(t)\|
 \ge\frac{c_\star\varrho(t)\|h_\star\|_{\mathsf H}}{\tau}
 E_\beta\!\left(\frac{g(T-t)^\beta}{\tau^q}\right).
\end{equation}
Consequently,
\begin{equation}\label{eq:cycle-exponential-rate}
 \liminf_{\tau\downarrow0}\tau^{q/\beta}
 \log\{\tau\|(D_\tau h_\star)_{i_0}(t)\|\}
 \ge g^{1/\beta}(T-t).
\end{equation}
\end{enumerate}
\end{theorem}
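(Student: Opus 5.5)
For part (i) the plan is to treat the resolvent purely algebraically. The $(i,j)$ block of $A^m$ is a sum over directed walks of length $m$ from $j$ to $i$ of products $A_{i i_{m-1}}\cdots A_{i_1 j}$. On an acyclic graph every walk is a path, so a walk of length $L+1$ cannot exist and $A^{L+1}=0$.

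With $A$ nilpotent, $\mathrm{Id}-\tau^{-1}A$ is invertible with inverse $\sum_{m=0}^{L}\tau^{-m}A^m$. Applying this to \eqref{eq:block-tangent} gives \eqref{eq:dag-block-resolvent} directly, and the same inverse gives uniqueness. If $\mathsf C=0$ the formula returns $D_\tau=0$.

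For the order statement, all terms with $m>m_\star$ vanish by definition of $m_\star$. We then multiply by $\tau^{m_\star+1}$:
\[
 \tau^{m_\star+1}D_\tau=A^{m_\star}\mathsf C+\sum_{m<m_\star}\tau^{m_\star-m}A^m\mathsf C .
\]
The finite sum is $O(\tau)$ in operator norm. The triangle inequality then yields \eqref{eq:dag-block-order}. If the forcing excites a longest path, then $m_\star=L$.

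For part (ii) the plan is to reduce to the scalar induction used in the proof of \cref{thm:aligned-mode-lower}. The convergent series gives $D_\tau h_\star=\tau^{-1}\sum_m\tau^{-m}A^m\mathsf C h_\star$. Since $\mathsf C h_\star$ lies in the product cone and every block is positive, every term $A^m\mathsf C h_\star$ lies in the product cone. Hence the $i_0$ component of $D_\tau h_\star$ dominates any subseries.

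Expanding the $i_0$ block of $A^{kq}$ as a sum over walks, each walk is a positive operator applied to a cone element, so each contributes a cone element. Keeping only the walk that traverses the given cycle $k$ times gives
\[
 (A^{kq}\mathsf C h_\star)_{i_0}\succeq B^k(\mathsf C h_\star)_{i_0}.
\]
Here the dropped walks either start from other components (which are cone elements) or traverse other routes, and in both cases they are nonnegative. Justifying this termwise dropping needs the fact that a sum of cone elements is in the cone. It also needs positivity of each $A_{ij}$ as a map between the component cones, which is exactly the hypothesis.

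We then use \eqref{eq:cycle-forcing}, positivity of $B$ (as a composition of positive blocks), and an induction with \eqref{eq:cycle-return} exactly as in \eqref{eq:positive-iterate-lower}. This gives
\[
 B^k(\mathsf C h_\star)_{i_0}(t)\succeq c_\star\|h_\star\|g^k\bigl(I_{T-}^{k\beta}\mathbf 1\bigr)(t)\psi(t),
\]
with $\bigl(I_{T-}^{k\beta}\mathbf 1\bigr)(t)=(T-t)^{k\beta}/\Gamma(k\beta+1)$ by \eqref{eq:fractional-constant}.

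Summing over $k$ with weights $\tau^{-kq}$ and discarding all $m$ not divisible by $q$ (allowed by positivity) gives
\[
 (D_\tau h_\star)_{i_0}(t)\succeq\tau^{-1}c_\star\|h_\star\|\,E_\beta\bigl(g(T-t)^\beta/\tau^q\bigr)\psi(t).
\]
The passage to the limit uses uniform convergence and closedness of the cone. Applying $\ell_t$ and using $\|\ell_t\|\le1$, together with positivity of $\ell_t$, gives \eqref{eq:cycle-mittag-lower}.

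Finally, \eqref{eq:cycle-exponential-rate} follows from $\log E_\beta(z)\sim z^{1/\beta}$ on the positive axis, applied with $z=g(T-t)^\beta\tau^{-q}$, since $z^{1/\beta}=g^{1/\beta}(T-t)\tau^{-q/\beta}$. The terms $\log(c_\star\varrho(t)\|h_\star\|)$ do not depend on $\tau$ and so vanish after multiplication by $\tau^{q/\beta}$.

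The main obstacle is the comparison step for the walk expansion. One must make sure that positivity of blocks as maps from $X_{j,+}$ to $X_{i,+}$ (in $\cX$, pointwise in $t$) is exactly what is needed for every dropped walk term to be nonnegative. One must also confirm that the order-preserving property survives the infinite sum, which is why convergence in $\cX$ and closedness of the cones are invoked.
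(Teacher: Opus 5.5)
Your proposal is correct and follows essentially the same route as the paper. In (i) both arguments expand the blocks of $A^m$ over walks to get nilpotency and the finite Neumann sum; in (ii) both use positivity of every Neumann term, isolate the cycle contribution, induct with the return bound, pass to the limit using closedness of the cone, apply $\ell_t$, and finish with the logarithmic Mittag--Leffler asymptotic. The only difference is cosmetic: you take $B^k$ directly from the $(i_0,i_0)$ block of $A^{kq}$, whereas the paper iterates the one-step comparison $(A^qz)_{i_0}\succeq Bz_{i_0}$ for product-cone-valued $z$, and the two are equivalent.
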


\begin{proof}
The $(i,j)$ block of $A^m$ is a sum of operator products indexed by directed walks of
length $m$ from $j$ to $i$.  An acyclic graph has no walk longer than $L$, so
$A^{L+1}=0$ and the Neumann series gives \eqref{eq:dag-block-resolvent}.  Multiplying
by $\tau^{m_\star+1}$ leaves $A^{m_\star}\mathsf C$ plus terms that vanish in operator norm,
which proves \eqref{eq:dag-block-order}.

For the cycle, positivity implies that every Neumann term is nonnegative.  The
$(i_0,i_0)$ block of $A^q$ contains $B$; for every product-cone-valued $z$, the
remaining walk terms are positive and hence $(A^qz)_{i_0}\succeq Bz_{i_0}$.
Thus induction and
\eqref{eq:cycle-forcing}--\eqref{eq:cycle-return} give
\[
 (A^{qm}\mathsf C h_\star)_{i_0}
 \succeq c_\star\|h_\star\|_{\mathsf H}g^m
 I_{T-}^{m\beta}{\bf1}\,\psi .
\]
Keeping the terms indexed by $qm$ in the causal series and applying $\ell_t$ proves
\eqref{eq:cycle-mittag-lower}.  The logarithmic positive-axis asymptotic
$\log E_\beta(x)\sim x^{1/\beta}$, valid for every $\beta>0$, gives
\eqref{eq:cycle-exponential-rate}.
\end{proof}

Under the positivity hypotheses above, the return condition can be checked edge by
edge.  Let $i_q=i_0$, choose paths
$\psi_r\in C([0,T];X_{i_r,+})$ with $\psi_q=\psi_0=: \psi$, and let
$a_r,\alpha_r>0$.  If, for every nonnegative $f\in C([0,T])$,
\begin{equation}\label{eq:edgewise-fractional-lower}
 A_{i_{r+1}i_r}(f\psi_r)
 \succeq a_r I_{T-}^{\alpha_r}f\,\psi_{r+1},
 \qquad r=0,\ldots,q-1,
\end{equation}
then \eqref{eq:cycle-return} holds with
$g=\prod_{r=0}^{q-1}a_r$ and $\beta=\sum_{r=0}^{q-1}\alpha_r$.

\begin{corollary}[Cycle resolution scale]
\label{cor:block-cycle-resolution}
Under the cyclic hypotheses of \cref{thm:block-feedback-dichotomy}, fix $t<T$, put
$H=T-t$, and let $h_n=n^{-1/2}h_\star$.  If
$\tau_n(\log n)^{\beta/q}\to\vartheta\in(0,\infty)$, then
\begin{equation}\label{eq:cycle-resolution-lower}
 \liminf_{n\to\infty}
 \frac{\log\|(D_{\tau_n}h_n)_{i_0}(t)\|}{\log n}
 \ge-\frac12+g^{1/\beta}H\vartheta^{-q/\beta}.
\end{equation}
Thus the response diverges when
\begin{equation}\label{eq:cycle-resolution-critical-lower}
 \vartheta<2^{\beta/q}g^{1/q}H^{\beta/q}.
\end{equation}
If, in addition, \cref{ass:fractional-causal} holds with
$\alpha=\beta/q\in(0,2]$ and $g=\kappa^q$, then the matching upper bound shows that
the response vanishes when the inequality in
\eqref{eq:cycle-resolution-critical-lower} is reversed.  The root-$n$ linear-response
boundary is therefore $(\log n)^{-\beta/q}$.
\end{corollary}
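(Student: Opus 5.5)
Linearity of $D_\tau$ gives $D_{\tau_n}h_n=n^{-1/2}D_{\tau_n}h_\star$, so the plan is to take logarithms in the cycle lower bound \eqref{eq:cycle-mittag-lower} and in the fractional upper bound \eqref{eq:mittag-bound}, and compare both against $\tfrac12\log n$. For the lower bound, \eqref{eq:cycle-mittag-lower} at $\tau=\tau_n$ gives
\[
 \log\|(D_{\tau_n}h_n)_{i_0}(t)\|
 \ge -\tfrac12\log n+\log\tfrac{c_\star\varrho(t)\|h_\star\|}{\tau_n}
 +\log E_\beta\!\left(\frac{gH^\beta}{\tau_n^q}\right).
\]
The constant term is $O(1)$. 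The term $\log(1/\tau_n)$ is $O(\log\log n)$ because $\tau_n\asymp(\log n)^{-\beta/q}$. For the Mittag--Leffler term I will use $\log E_\beta(x)\sim x^{1/\beta}$ as $x\to\infty$, valid for every $\beta>0$. With $x_n=gH^\beta\tau_n^{-q}$ this gives
\[
 x_n^{1/\beta}=g^{1/\beta}H\,\tau_n^{-q/\beta}
 =g^{1/\beta}H\,\vartheta^{-q/\beta}\log n\,(1+o(1)).
\]
Dividing by $\log n$ and taking $\liminf$ yields \eqref{eq:cycle-resolution-lower}. The right-hand side is positive exactly when $g^{1/\beta}H\vartheta^{-q/\beta}>1/2$, i.e.\ $\vartheta^{q/\beta}<2g^{1/\beta}H$, which rearranges to \eqref{eq:cycle-resolution-critical-lower}. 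A positive $\liminf$ of $\log\|\cdot\|/\log n$ forces divergence.

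For the upper half, \cref{ass:fractional-causal} applies to the block system with the same $A$ and $\mathsf C$, since \eqref{eq:block-tangent} is \eqref{eq:abstract-tangent}. \Cref{thm:fractional-resolvent} then gives
\[
 \|(D_{\tau}h_n)_{i_0}(t)\|\le\|D_\tau h_n(t)\|_X\le C_c n^{-1/2}\|h_\star\|\tau^{-1}E_\alpha(\kappa H^\alpha/\tau),
\]
where I assume the product norm dominates each block norm. Set $\alpha=\beta/q$ and $\kappa=g^{1/q}$. Then $\kappa^{1/\alpha}=g^{1/\beta}$ and $\tau_n^{-1/\alpha}=\tau_n^{-q/\beta}$, so the same asymptotic gives
\[
 \limsup_n\frac{\log\|\cdot\|}{\log n}\le-\tfrac12+g^{1/\beta}H\vartheta^{-q/\beta}.
\]
This is negative under the reversed inequality, so the response tends to zero. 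The upper and lower exponents coincide, and they change sign at the same $\vartheta$; hence the boundary scale is $\tau\asymp(\log n)^{-\beta/q}$.

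The main point to check is that the Mittag--Leffler asymptotic is used in the right form. For $\beta\le2$ the explicit expansion $\beta^{-1}e^{x^{1/\beta}}$ holds, but when $\beta>2$ only the logarithmic relation $\log E_\beta(x)\sim x^{1/\beta}$ is available. I must make sure the $o(\log n)$ errors from that relation, and the $\log(1/\tau_n)$ term, are genuinely lower order. Both are: the first is $o(x_n^{1/\beta})=o(\log n)$ and the second is $O(\log\log n)$. The upper half needs $\alpha\le2$, which the hypothesis $\beta/q\in(0,2]$ supplies, so nothing beyond the logarithmic asymptotic is required.
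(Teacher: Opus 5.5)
Your proposal is correct and follows the paper's proof. It uses the logarithmic Mittag--Leffler asymptotic in \eqref{eq:cycle-mittag-lower}, absorbs the exterior $\log(1/\tau_n)=O(\log\log n)$ factor, and takes the upper half from \eqref{eq:mittag-bound} with $\alpha=\beta/q$ and $\kappa=g^{1/q}$; your identity $\kappa^{1/\alpha}=g^{1/\beta}$ and the block-projection bound (which costs at most a constant) only make explicit what the paper leaves implicit.
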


\begin{proof}
Apply the logarithmic Mittag--Leffler asymptotic to
\eqref{eq:cycle-mittag-lower}; the exterior factor contributes
$\log(\tau_n^{-1})=O(\log\log n)=o(\log n)$.  The upper half follows from
\eqref{eq:mittag-bound} with $\alpha=\beta/q$.
\end{proof}

The ratio
\begin{equation}\label{eq:cycle-mean-order}
 \bar\alpha_C=\frac1{q_C}\sum_{e\in C}\alpha_e
\end{equation}
is the mean causal smoothing order of a positive cycle $C$.  Among several cycles,
the smallest $\bar\alpha_C$ gives the strongest power of $1/\tau$ in the exponential;
ties are broken by the geometric return gain.  Among the lower bounds certified by
\eqref{eq:edgewise-fractional-lower}, suppose a mode $\psi_i$ is fixed at each
vertex and every edge bound uses these same vertex modes.  Finding the strongest
guaranteed exponent is then a minimum-mean-cycle problem in the sense of
\cite{Karp1978}.  This statement does not
exclude another operator component from producing a larger full-resolvent norm.  For
edge kernels bounded below by positive constants along the cycle, $\alpha_e=1$, and
the cycle guarantees inverse-log linear-response divergence below its cycle constant;
a matching upper bound identifies the full boundary.

The same causal iteration gives a nonlinear bound for model and numerical error.  For
two policies define
$d(t)=\sup_{x}\norm{\pi(t,x)-\tilde\pi(t,x)}_{\rm TV}$.

\begin{theorem}[Nonlinear model-and-residual stability]
\label{thm:nonlinear-stability}
Let $\pi$ be an exact causal Gibbs equilibrium for $M$.  Let
$\eta\in L^\infty(0,T)$ be nonnegative, and let $\tilde\pi$ have fixed-point
residual at most $\eta(t)$ for $\widetilde M$:
\begin{equation}\label{eq:fixed-point-residual}
 \sup_x\norm{\tilde\pi(t,x)-
 \cG_\tau(\cQ_{\widetilde M}(\tilde\pi)(t,x,\cdot))}_{\rm TV}
 \le\eta(t).
\end{equation}
Suppose the score maps obey the following bound pairwise for every two candidates in
the class, for some $\alpha>0$, $L_M,\kappa\ge0$, and primitive distance
$\varepsilon=d(M,\widetilde M)$:
\begin{equation}\label{eq:nonlinear-score-bound}
 \sup_x\osc\{\cQ_M(\pi)-\cQ_{\widetilde M}(\tilde\pi)\}(t,x,\cdot)
 \le L_M\varepsilon+\kappa(I_{T-}^\alpha d)(t).
\end{equation}
Then
\begin{equation}\label{eq:nonlinear-stability-bound}
 d(t)\le
 \left(\norm{\eta}_\infty+\frac{L_M\varepsilon}{\tau}\right)
 E_\alpha\!\left(\frac{\kappa(T-t)^\alpha}{\tau}\right).
\end{equation}
In particular, exact equilibria are unique within any class satisfying
\eqref{eq:nonlinear-score-bound} with $\varepsilon=0$.
\end{theorem}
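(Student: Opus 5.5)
The plan is to reduce the statement to a scalar Volterra inequality for $d$ and then close it by iterating the fractional integral, as in the proof of \cref{thm:fractional-resolvent}.

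\emph{Step 1: a pointwise inequality for $d$.} Fix $(t,x)$. Since $\pi$ is an exact equilibrium, $\pi(t,x)=\cG_\tau(\cQ_M(\pi)(t,x,\cdot))$. The triangle inequality splits the distance into two pieces. The first is the residual term, bounded by $\eta(t)$ through \eqref{eq:fixed-point-residual}. The second is
\[
\norm{\cG_\tau(\cQ_{\widetilde M}(\tilde\pi)(t,x,\cdot))-\cG_\tau(\cQ_M(\pi)(t,x,\cdot))}_{\rm TV},
\]
which the global Lipschitz bound \eqref{eq:gibbs-global-lipschitz} of \cref{lem:gibbs-differential} controls by $\tau^{-1}\osc$ of the score difference. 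Taking the supremum over $x$ and inserting \eqref{eq:nonlinear-score-bound} gives
\[
d(t)\le \norm{\eta}_\infty+\frac{L_M\varepsilon}{\tau}+\frac{\kappa}{\tau}(I_{T-}^\alpha d)(t).
\]
Write $a:=\norm{\eta}_\infty+L_M\varepsilon/\tau$ for the constant part.

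\emph{Step 2: iterate the inequality.} Substitute the inequality into itself $m$ times. Positivity of $I_{T-}^\alpha$, the semigroup identity, and \eqref{eq:fractional-constant} give
\[
d(t)\le a\sum_{k=0}^{m-1}\frac{(\kappa/\tau)^k(T-t)^{k\alpha}}{\Gamma(k\alpha+1)}
+\Big(\frac{\kappa}{\tau}\Big)^m (I_{T-}^{m\alpha}d)(t).
\]
Since $d\le2$ pointwise, the remainder is at most $2(\kappa/\tau)^m(T-t)^{m\alpha}/\Gamma(m\alpha+1)$, which tends to $0$ as $m\to\infty$. Letting $m\to\infty$ yields \eqref{eq:nonlinear-stability-bound}.

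\emph{Step 3: uniqueness.} Apply the result with $\widetilde M=M$, $\varepsilon=0$, $\eta=0$, and $\tilde\pi$ a second exact equilibrium in the class. Then $a=0$, so $d\equiv0$ and the two equilibria coincide.

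\emph{Main obstacle: measurability of $d$.} The iteration requires $d$ to be measurable and bounded so that $I_{T-}^\alpha d$ makes sense. Boundedness is automatic from $d\le 2$. For measurability, the first thing to try is that the hypothesis \eqref{eq:nonlinear-score-bound} already presupposes $I_{T-}^\alpha d$ is defined, so $d$ is implicitly measurable. Failing that, I would replace $d$ by its upper (outer) integral, i.e.\ work with a measurable majorant of $d$; the positivity of the kernel is all the iteration uses, so the argument goes through unchanged.
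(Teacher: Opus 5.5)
Your proposal is correct and follows the paper's proof essentially step for step. The triangle inequality at the exact equilibrium and the global Gibbs Lipschitz bound of \cref{lem:gibbs-differential} give $d\le a+(\kappa/\tau)I_{T-}^\alpha d$; iterating this $N$ times and killing the remainder with $d\le 2$ gives the bound, and setting $\varepsilon=0$, $\eta=0$ gives uniqueness. Your measurability remark only makes explicit what the paper leaves implicit, namely that $I_{T-}^\alpha d$ is well defined.
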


\begin{remark}
The constants $L_M$ and $\kappa$ are understood to be uniform over any temperature
range for which the displayed dependence is used.  The quantity $\eta$ is a policy
fixed-point residual.  A sup-norm score residual $\rho$ first passes through the Gibbs
map and contributes at most $2\rho/\tau$ to $\eta$ (or $\rho/\tau$ when $\rho$ denotes
score oscillation).
\end{remark}

\begin{proof}
The triangle inequality, \eqref{eq:fixed-point-residual}, and
\cref{lem:gibbs-differential} give
\[
 d(t)\le\eta(t)+\frac{L_M\varepsilon}{\tau}
       +\frac\kappa\tau(I_{T-}^\alpha d)(t).
\]
Put $a=\|\eta\|_\infty+L_M\varepsilon/\tau$.  Since $d\le2$, $N$ iterations give
\[
 d(t)\le a\sum_{m=0}^{N-1}
 \frac{[\kappa(T-t)^\alpha/\tau]^m}{\Gamma(m\alpha+1)}
 +2\frac{[\kappa(T-t)^\alpha/\tau]^N}{\Gamma(N\alpha+1)}.
\]
The last term tends to zero as $N\to\infty$, which proves
\eqref{eq:nonlinear-stability-bound}.  If both the model discrepancy and the
residual vanish, the same estimate gives $d(t)=0$ for every $t$.
\end{proof}

For exact equilibria, so that $\eta_n=0$, if
$\varepsilon_n=O_{\Pp}(n^{-1/2})$, a sufficient condition for the model-error bound
in \eqref{eq:nonlinear-stability-bound} to vanish in probability is
\begin{equation}\label{eq:sufficient-annealing}
 \frac{1}{\sqrt n\,\tau_n}
 E_\alpha\!\left(\frac{\kappa T^\alpha}{\tau_n}\right)\longrightarrow0.
\end{equation}
For an approximate fixed point one must additionally require
\[
 \|\eta_n\|_\infty
 E_\alpha\!\left(\frac{\kappa T^\alpha}{\tau_n}\right)
 \longrightarrow0
\]
in probability, or deterministically when the residual is deterministic.
When $\kappa>0$, the exponential term gives the critical order
$(\log n)^{-\alpha}$.  More precisely, a sufficient condition is
\begin{equation}\label{eq:fractional-log-barrier}
 \liminf_{n\to\infty}\tau_n(\log n)^\alpha
 >2^\alpha\kappa T^\alpha.
\end{equation}
If $\kappa=0$, there is no logarithmic barrier and
$(\sqrt n\,\tau_n)^{-1}\to0$ is the corresponding sufficient condition.
This is only a sufficient general law: stable action gaps can make the Gibbs covariance
shrink with $\tau$, whereas tied modes can attain the full amplification.  The exact
construction in \cref{sec:exact} proves a sharp nonlinear law for $\alpha=1$.

\section{EEHJB stability and influence equations}
\label{sec:stability}

This section turns the abstract causal mechanism into two EEHJB results.  First, it
derives a temperature-explicit stability estimate for learned-model perturbations.
Second, at fixed positive temperature, it constructs a twice differentiable local
equilibrium branch and its influence equations.  The weak space below is sufficient for
the mild fixed-point and stability arguments; the stronger H\"older space is used only
to recover classical solutions.  A separate comparison envelope records the additional
uniformity needed when the temperature also varies.

\subsection{Temperature-explicit stability}

We work directly with the following diffusion system.  Let
$r=r(y,t,x,a)$, let $\delta$ be a fixed bounded $C^1$ discount kernel
with $\delta(0)=1$, and let $G(\vartheta,y,z)$ be fixed and smooth.  Its first two
arguments are reference variables, not flow-state variables.  For a policy $\pi$, set
\begin{align}
 B^\pi(t,x)&=\int_\A b(t,x,a)\pi(t,x,da),&
 R_y^\pi(t,x)&=\int_\A r(y,t,x,a)\pi(t,x,da).
                                                        \label{eq:general-policy-averages}
\end{align}
Its policy-evaluation pair is the solution of
\begin{equation}\label{eq:general-V1}
\begin{aligned}
0={}&\partial_tV^{1,\pi}+\tfrac12\operatorname{tr}
 (\sigma\sigma^\top D_x^2V^{1,\pi})+B^\pi\cdot D_xV^{1,\pi}\\
&+\delta(t-\vartheta)\{R_y^\pi-\tau\KL(\pi\Vert\mu)\},
\qquad V^{1,\pi}(\vartheta,T,y,x)=F(\vartheta,y,x).
\end{aligned}
\end{equation}
\begin{equation}\label{eq:general-V2}
\begin{aligned}
0={}&\partial_tV^{2,\pi}+\tfrac12\operatorname{tr}
 (\sigma\sigma^\top D_x^2V^{2,\pi})+B^\pi\cdot D_xV^{2,\pi},\\
&\hspace{7em}V^{2,\pi}(T,x)=h(x).
\end{aligned}
\end{equation}
The variables $(\vartheta,y)$ are frozen in \eqref{eq:general-V1} and in $G$.  The
acting self's criterion is
\[
 J(t,x;\pi):=V^{1,\pi}(t,t,x,x)+G(t,x,V^{2,\pi}(t,x)).
\]
Its policy-relevant flow-state gradient is
\begin{equation}\label{eq:Z-general}
 Z_V(t,x)=D_xV^1(t,t,x,x)
 +G_z(t,x,V^2(t,x))D_xV^2(t,x).
\end{equation}
For a model $M=(b,r,F,h)$, put
\begin{align}
 q_{M,V}(t,x,a)&=b(t,x,a)\cdot Z_V(t,x)+r(x,t,x,a),
                                                        \label{eq:q-general}\\
 \pi_{M,V}(t,x)&=\cG_\tau(q_{M,V}(t,x,\cdot)).          \label{eq:pi-general}
\end{align}
An equilibrium is a classical pair $V=(V^1,V^2)$ satisfying
\eqref{eq:general-V1}--\eqref{eq:general-V2} under $\pi=\pi_{M,V}$.  This is the full
system needed below.  The spike calculation in \cref{prop:spike-verification} applies
with the extra flow-state gradient $G_zD_xV^2$ already included in $Z_V$.  The first
two arguments of $G$ remain frozen, so no reference-state derivative enters.  Thus the
PDE fixed point is equivalent to the local equilibrium condition.

Stability and the implicit-function argument use the weak value space
\[
 \mathcal D_1=\{(\vartheta,t,y,x):0\leq\vartheta\leq t\leq T,
                  \ y,x\in\R^d\},
 \qquad \mathcal Q=[0,T]\times\R^d.
\]
Let $\mathfrak V$ be the product space of pairs $V=(V^1,V^2)$ for which
$V^1,D_xV^1$ are bounded and uniformly continuous on $\mathcal D_1$, and
$V^2,D_xV^2$ are bounded and uniformly continuous on $\mathcal Q$.  Its norm is
\begin{align}
 \|V\|_{\mathfrak V}:={}&
 \sup_{\mathcal D_1}(|V^1|+|D_xV^1|)
 +\sup_{\mathcal Q}(|V^2|+|D_xV^2|).                 \label{eq:X1-norm}
\end{align}
For $t_0\in[0,T]$, its tail norm is
\begin{align}
 \|V\|_{\mathfrak V,[t_0,T]}:={}&
 \sup_{\substack{(\vartheta,t,y,x)\in\mathcal D_1\\t\ge t_0}}
 (|V^1|+|D_xV^1|)
 +\sup_{\substack{(t,x)\in\mathcal Q\\t\ge t_0}}
 (|V^2|+|D_xV^2|).                                  \label{eq:X1-tail-norm}
\end{align}
The derivatives are classical flow-state derivatives.  Uniform convergence of a
function and its derivative preserves this relation along line segments, so
$\mathfrak V$ is complete.  The diagonal flow-gradient map
\begin{equation}\label{eq:diagonal-trace-weak}
 \mathsf T V^1(t,x):=D_xV^1(t,t,x,x)
\end{equation}
has operator norm at most one from the first component of $\mathfrak V$ into
$C_b(\mathcal Q;\R^d)$.  It is the flow-state derivative restricted to the diagonal,
not the total derivative of $x\mapsto V^1(t,t,x,x)$.

The coefficient norm is defined next.  If $E$ contains a distinguished
flow-state variable $x$, let $\mathcal B_x^1(E)$ consist of functions for which
$f$ and $D_xf$ are bounded and uniformly continuous in all variables, including the
action variable when present, with norm
$\|f\|_{\mathcal B_x^1}=\sup_E(|f|+|D_xf|)$.  Vector-valued versions use the
Euclidean norm.  Set
\begin{align}
 \mathfrak C^1:={}&
 \mathcal B_x^1(\mathcal Q\times\A;\R^d)
 \times\mathcal B_x^1(\R^d\times\mathcal Q\times\A)
 \times\mathcal B_x^1([0,T]\times\R^d\times\R^d)
 \times\mathcal B_x^1(\R^d),                          \label{eq:coefficient-space}
\end{align}
for $(b,r,F,h)$ in that order.  Supremum norms over reference variables and actions
are part of this definition.

Classical regularity uses a stronger space.  Put
$\rho_r=|y-y'|+|t-t'|^{1/2}+|x-x'|$,
$\rho_{\rm ref}=|\vartheta-\vartheta'|^{1/2}+|y-y'|$, and
$F^\sharp(\vartheta,y)=F(\vartheta,y,\cdot)$.  The space
$\mathfrak C^{\rm cl}_\gamma\subset\mathfrak C^1$ requires, uniformly in $a$,
$b\in C_b^{\gamma/2,1+\gamma}$, $r,D_xr$ to be $\gamma$-H\"older in $\rho_r$,
$F^\sharp$ to be bounded in $C_b^{2+\gamma}$ and $\gamma$-H\"older in
$\rho_{\rm ref}$, and $h\in C_b^{2+\gamma}$.  Its exact norm is in the supplement.

The hypotheses needed for the fixed-temperature calculus are separated from the
additional envelope needed for uniform small-temperature comparison.

\begin{assumption}[Data and diffusion evolution]\label{ass:eehjb-data}
Fix $\gamma\in(0,1)$.  The action space is compact metric and $\mu$ has full support.  Put
$a=\sigma\sigma^\top$ and
$\mathcal L_t^0=\tfrac12\operatorname{tr}(a(t,\cdot)D_x^2)$.  The following hold on
the parameter neighborhood under consideration.
\begin{enumerate}[label=\textup{(\roman*)},leftmargin=2em]
\item The uncontrolled matrix $a$ is bounded and uniformly elliptic.  It belongs to
$C_b^{\gamma/2,1+\gamma}(\mathcal Q)$, and its evolution family $P^0_{t,s}$ satisfies
\begin{align}
 \|P^0_{t,s}f\|_\infty&\le C_0\|f\|_\infty,\notag\\
 \|D_xP^0_{t,s}f\|_\infty
   &\le C_0(s-t)^{-1/2}\|f\|_\infty,\notag\\
 \|P^0_{t,s}g\|_{\mathcal B_x^1}&\le C_0\|g\|_{\mathcal B_x^1}.
                                                               \label{eq:base-semigroup}
\end{align}
For each $\eta\in(0,\gamma)$ and $0\le t\le t'<s\le T$, it also satisfies
\begin{align}
 [D_xP^0_{t,s}f]_{C_x^\eta}
 &\le C_\eta(s-t)^{-(1+\eta)/2}\|f\|_\infty,\notag\\
 \|D_xP^0_{t,s}f-D_xP^0_{t',s}f\|_\infty
 &\le C_\eta|t-t'|^{\eta/2}(s-t')^{-(1+\eta)/2}\|f\|_\infty.
                                                               \label{eq:base-holder}
\end{align}
For the classical conclusion, we also use the standard whole-space linear Schauder
estimates for this evolution family under the same coefficient assumptions.
\item The coefficient quadruple belongs to $\mathfrak C^1$.  For a
finite-dimensional family, $\theta\mapsto M_\theta$ is $C^2$ from an open subset of
$\R^p$ into $\mathfrak C^1$.
\item The discount kernel is $C^1$.  For every $R<\infty$ and $0\le j\le3$,
$\partial_z^jG$ is bounded and uniformly continuous on
$\mathcal Q\times[-R,R]$.  In particular,
\[
 \omega_{G,R}(\varepsilon):=
 \sup_{\substack{(t,x)\in\mathcal Q,\ |z|\vee|z'|\le R\\|z-z'|\le\varepsilon}}
 |G_{zzz}(t,x,z)-G_{zzz}(t,x,z')|\longrightarrow0.
\]
\item For the classical conclusion, $M_\theta$ and its first two parameter
derivatives are locally bounded in $\mathfrak C^{\rm cl}_\gamma$, and, on bounded
$z$-ranges, $\partial_z^jG$, $0\le j\le3$, are uniformly bounded in
$C_b^{\gamma/2,\gamma}(\mathcal Q)$.
\end{enumerate}
Items \textup{(i)}--\textup{(iii)} are the mild clauses; item \textup{(iv)} is the
classical add-on.  All bounds are local and uniform in the stated parameter
neighborhood.
\end{assumption}

\begin{assumption}[Uniform comparison envelope]\label{ass:eehjb-envelope}
Let $\mathcal I\subset(0,\bar\tau]$.  The selected classical equilibria compared in
\cref{thm:eehjb-stability} obey
\[
 \sup_{\tau\in\mathcal I}
 \bigl(\|V_{M,\tau}^\star\|_{\mathfrak V}
      +\|V_{\widetilde M,\tau}^\star\|_{\mathfrak V}\bigr)\le B_V,
\]
and the mild-clause bounds in \cref{ass:eehjb-data} are uniform over the same
collection.  Existence is asserted only for these selected solutions.  No evaluation
on an open value-function ball is included in this assumption.
\end{assumption}

Taking $\mathcal I=\{\tau\}$ gives a fixed-temperature comparison.  If
$0\in\overline{\mathcal I}$, the common bound is an extra small-temperature
hypothesis; fixed-temperature well-posedness does not supply it.

For two models with the same $(\sigma,G)$, use the primitive distance
\begin{equation}
 d_1(M,\widetilde M)=\|M-\widetilde M\|_{\mathfrak C^1}.
                                                               \label{eq:model-distance}
\end{equation}
For $t_0\in[0,T]$, set
\begin{equation}\label{eq:policy-tail-distance}
 d_{{\rm pol},t_0}(\pi,\widetilde\pi)
 :=\sup_{\substack{t\in[t_0,T]\\x\in\R^d}}
 \norm{\pi(t,x)-\widetilde\pi(t,x)}_{\rm TV},
 \qquad d_{\rm pol}(\pi,\widetilde\pi)
 :=d_{{\rm pol},0}(\pi,\widetilde\pi).
\end{equation}

For a candidate $V$ and model $M$, abbreviate
\begin{align}
 H_\tau(q)&:=\tau\log\int_\A e^{q(a)/\tau}\mu(da),\notag\\
 B_{M,V}&:=B^{\pi_{M,V}},&
 R_{M,y,V}&:=R_y^{\pi_{M,V}},\notag\\
 \Phi^1_{M,V}
 &:=\delta(t-\vartheta)\{H_\tau(q_{M,V})
   +R_{M,y,V}-R_{M,x,V}-B_{M,V}\cdot Z_V\}.
                                                        \label{eq:cancelled-source}
\end{align}
Here $R_{M,x,V}$ means that the frozen reward state is set equal to the current
state.  The Gibbs log-partition identity rewrites the first evaluation equation with
drift $B_{M,V}$ and source $\Phi^1_{M,V}$; its short derivation is recorded in the
supplement.

The next estimate uses the uncontrolled diffusion evolution and requires no spatial
derivative bound on the Gibbs-induced drift.

\begin{lemma}[Bounded-drift parabolic estimate]\label{lem:parabolic-gradient}
Suppose \cref{ass:eehjb-data}(i) holds.  Let $u$ be the mild solution of
\[
 \partial_tu+\mathcal L_t^0u+c\cdot D_xu+f=0,\qquad u(T)=g,
\]
where $\|c\|_\infty\le B_c$, $f$ is bounded, and $g\in\mathcal B_x^1$.  Then
\begin{equation}\label{eq:parabolic-gradient-estimate}
 \norm{u(t)}_\infty+\norm{D_xu(t)}_\infty
 \le C\norm{g}_{\mathcal B_x^1}
 +C\int_t^T\{1+(s-t)^{-1/2}\}\norm{f(s)}_\infty\,ds,
\end{equation}
where $C$ depends on $(C_0,B_c,T)$ and not on derivatives of $c$.  The same estimate
holds for a difference equation with a bounded drift forcing $\dot c\cdot D_xu$.
\end{lemma}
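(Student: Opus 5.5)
We will use the Duhamel representation relative to the uncontrolled evolution $P^0_{t,s}$ and treat the bounded drift term as part of the forcing. The mild solution satisfies
\[
 u(t)=P^0_{t,T}g+\int_t^T P^0_{t,s}\bigl\{f(s)+c(s)\cdot D_xu(s)\bigr\}\,ds .
\]
Set $\phi(t):=\norm{u(t)}_\infty+\norm{D_xu(t)}_\infty$. The aim is a scalar Volterra inequality of weakly singular type for $\phi$, closed by a Gronwall argument that matches the order $\alpha=1/2$ already noted after \cref{ass:fractional-causal}. Only $\norm{c}_\infty$ enters, never derivatives of $c$, because $c\cdot D_xu$ is estimated in sup norm and the spatial derivative is placed on the semigroup.

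First we bound the terminal term. By the third line of \eqref{eq:base-semigroup}, $\norm{P^0_{t,T}g}_\infty+\norm{D_xP^0_{t,T}g}_\infty\le C_0\norm{g}_{\mathcal B_x^1}$. For the integral, the first two lines of \eqref{eq:base-semigroup} give, for each $s$,
\[
 \norm{P^0_{t,s}k}_\infty+\norm{D_xP^0_{t,s}k}_\infty\le C_0\{1+(s-t)^{-1/2}\}\norm{k}_\infty .
\]
We apply this with $k=f(s)+c(s)\cdot D_xu(s)$ and use $\norm{c(s)\cdot D_xu(s)}_\infty\le B_c\phi(s)$. Differentiating under the integral is justified because the kernel singularity $(s-t)^{-1/2}$ is integrable. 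The result is
\[
 \phi(t)\le a(t)+C_0B_c\int_t^T\{1+(s-t)^{-1/2}\}\phi(s)\,ds,
\]
where $a(t)$ equals $C_0\norm{g}_{\mathcal B_x^1}+C_0\int_t^T\{1+(s-t)^{-1/2}\}\norm{f(s)}_\infty ds$.

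To close the inequality we bound $1+(s-t)^{-1/2}\le(1+T^{1/2})(s-t)^{-1/2}$, which gives $\phi\le a+\kappa'\Gamma(1/2)\,I^{1/2}_{T-}\phi$ with $\kappa'=C_0B_c(1+\sqrt T)$. Iterating exactly as in the proof of \cref{thm:nonlinear-stability} produces the Neumann series $\sum_m(\kappa'\Gamma(1/2))^m I^{m/2}_{T-}a$. The remainder after $N$ steps vanishes provided $\phi$ is bounded; this holds for the mild solution in the weak space, or can be arranged by first working on a short terminal strip.

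This is where the main care is needed, since the estimate must keep the same form as \eqref{eq:parabolic-gradient-estimate} and not merely bound $\phi$ by $\sup a$. We have $I^{m/2}_{T-}$ composed with $\int_t^T\{1+(s-t)^{-1/2}\}\norm{f(s)}_\infty\,ds$. By the semigroup property this equals a sum of fractional integrals of $\norm f_\infty$ of orders $m/2+1$ and $m/2+1/2$, and each such kernel is dominated by $C_{T,m}\{1+(s-t)^{-1/2}\}$. The resulting geometric-type constants sum to $E_{1/2}$-type bounds depending only on $(C_0,B_c,T)$. This gives \eqref{eq:parabolic-gradient-estimate}.

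For the difference equation, the additional forcing $\dot c\cdot D_xu$ is bounded in sup norm by $\norm{\dot c}_\infty\norm{D_xu}_\infty$. It is absorbed into $f$, so the same argument applies verbatim.
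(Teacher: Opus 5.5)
Your proposal is correct and is essentially the paper's proof. Both use Duhamel's formula relative to $P^0_{t,s}$ and the bounds \eqref{eq:base-semigroup} to get a weakly singular Volterra inequality for $\|u(t)\|_\infty+\|D_xu(t)\|_\infty$, dominate $1+r^{-1/2}$ by $C_Tr^{-1/2}$, and sum the half-order fractional iterates, so only $\|c\|_\infty$ enters. You also compose $I_{T-}^{m/2}$ explicitly with the forcing integral, showing the resolvent kernel stays dominated by $C\{1+(s-t)^{-1/2}\}$; this spells out a step the paper's operator-norm summation leaves implicit, while the paper additionally reads off uniqueness from the same estimate and existence from short-interval contraction and continuation, which you take as given.
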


\begin{proof}
Duhamel's formula and \eqref{eq:base-semigroup}, with
$e(t)=\|u(t)\|_\infty+\|D_xu(t)\|_\infty$ and $k(r)=1+r^{-1/2}$, give
\[
 e(t)\le C\|g\|_{\mathcal B_x^1}
 +C\int_t^Tk(s-t)\|f(s)\|_\infty\,ds
 +CB_c\int_t^Tk(s-t)e(s)\,ds .
\]
Since $k(r)\le C_Tr^{-1/2}$, the $m$th resolvent iterate is bounded by
$(CB_c)^mT^{m/2}/\Gamma(m/2+1)$.  Summation proves the estimate and uniqueness;
short-interval contraction and continuation give existence.  For induced drifts,
$B_c=\sup_a\|b(\cdot,a)\|_\infty$, with no temperature factor.  The supplement
records the full resolvent closure.
\end{proof}

\begin{theorem}[Temperature-explicit learned-model stability]
\label{thm:eehjb-stability}
Let $M$ and $\widetilde M$ satisfy the mild clauses of
\cref{ass:eehjb-data,ass:eehjb-envelope}, with equilibrium pairs
$V_M^\star,V_{\widetilde M}^\star$ and policies $\pi_M^\star,
\pi_{\widetilde M}^\star$.  For every $\tau\in\mathcal I$,
\begin{align}
 \norm{V_M^\star-V_{\widetilde M}^\star}_{\mathfrak V}
 &\le S_\tau d_1(M,\widetilde M),                     \label{eq:value-stability}\\
 d_{\rm pol}(\pi_M^\star,\pi_{\widetilde M}^\star)
 &\le \min\left\{2,\frac{C(1+S_\tau)}{\tau}
               d_1(M,\widetilde M)\right\},          \label{eq:policy-stability}
\end{align}
where one may take
\begin{equation}\label{eq:S-tau}
 S_\tau=C E_{1/2}\!\left(C(1+\tau^{-1})T^{1/2}\right)
 \le C\exp\{CT(1+\tau^{-2})\}.
\end{equation}
Along a sequence $\tau\downarrow0$, this estimate requires
$\tau\in\mathcal I$ and the uniform envelope in \cref{ass:eehjb-envelope}.
For a fixed model, any two classical solutions in the envelope coincide; hence the
selected solution is unique in that class.
\end{theorem}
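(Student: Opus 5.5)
We compare the two equilibria through a single causal Volterra inequality on tail norms, and then close it with a fractional Gronwall argument of Mittag--Leffler type, exactly as in \cref{thm:nonlinear-stability} and \cref{lem:parabolic-gradient}. Write $V=V_M^\star$, $\widetilde V=V_{\widetilde M}^\star$, $\varepsilon=d_1(M,\widetilde M)$, and set
\[
 e(t_0)=\|V-\widetilde V\|_{\mathfrak V,[t_0,T]} .
\]
By \cref{ass:eehjb-envelope}, both pairs lie in the ball of radius $B_V$. Hence the diagonal gradients $Z_V$ and $Z_{\widetilde V}$ are bounded by a constant depending only on $B_V$ and on the bounds for $G_z$ and $G_{zz}$ on $[-B_V,B_V]$. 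The same holds for the induced drifts $B_{M,V}$ and $B_{\widetilde M,\widetilde V}$, whose bound is $\sup_a\|b(\cdot,a)\|_\infty$ with no factor of $\tau$.

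\textbf{Step 1: scores and policies.} The definitions \eqref{eq:Z-general}--\eqref{eq:q-general} and the operator-norm bound on $\mathsf T$ give
\[
 \sup_x\osc\{q_{M,V}-q_{\widetilde M,\widetilde V}\}(t,x,\cdot)\le C(\varepsilon+e(t)).
\]
\cref{lem:gibbs-differential} then yields the pointwise policy bound
\[
 d(t):=\sup_x\|\pi_{M,V}-\pi_{\widetilde M,\widetilde V}\|_{\rm TV}(t,x)\le\min\{2,C\tau^{-1}(\varepsilon+e(t))\}.
\]

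\textbf{Step 2: differences of the evaluation equations.} I subtract the two evaluation equations, using the cancelled-source form \eqref{eq:cancelled-source} for $V^1$, and \eqref{eq:general-V2} for $V^2$. This produces two kinds of terms. The first kind are drift-difference forcings, such as $(B_{M,V}-B_{\widetilde M,\widetilde V})\cdot D_x\widetilde V$; they are bounded by $C(\varepsilon+d(s))$. The second kind are source differences. For these I use that $H_\tau$ is $1$-Lipschitz in sup norm with no $1/\tau$ factor, while the policy-averaged rewards contribute $C(\varepsilon+d(s))$. The terminal data differ by at most $C\varepsilon$ in $\mathcal B^1_x$. Applying \cref{lem:parabolic-gradient} in its difference-equation form, with $k(r)=1+r^{-1/2}$ and the flow variables frozen, I obtain
\[
 e(t)\le C\varepsilon+C\int_t^Tk(s-t)\bigl(\varepsilon+e(s)+d(s)\bigr)\,ds .
\]
Inserting Step 1 gives the closed inequality
\[
 e(t)\le C\varepsilon(1+\tau^{-1})+C(1+\tau^{-1})\int_t^T(s-t)^{-1/2}e(s)\,ds .
\]

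\textbf{Step 3: Gronwall closure.} Iterating the inequality as in the proof of \cref{thm:fractional-resolvent}, with $\alpha=1/2$ and $\kappa=C(1+\tau^{-1})\Gamma(1/2)$, bounds the $m$th iterate by $\kappa^mT^{m/2}/\Gamma(m/2+1)$ times the forcing. The remainder term tends to zero because $e\le 2B_V$. This gives \eqref{eq:value-stability} with
\[
 S_\tau=CE_{1/2}\bigl(C(1+\tau^{-1})T^{1/2}\bigr).
\]
The prefactor $1+\tau^{-1}$ from the forcing is absorbed, since $x\le E_{1/2}(x)$ up to constants. The exponential bound $E_{1/2}(z)\le Ce^{z^2}$ converts $S_\tau$ into $C\exp\{CT(1+\tau^{-2})\}$. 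Substituting back into Step 1 gives \eqref{eq:policy-stability}, the minimum with $2$ being the diameter bound. Uniqueness within the envelope is the special case $\varepsilon=0$, which forces $e\equiv0$. All constants are uniform over $\mathcal I$ only because the envelope and the mild-clause bounds are uniform there, and this is the reason the statement requires $\tau\in\mathcal I$ along sequences $\tau\downarrow0$.

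\textbf{Main obstacle.} I expect the hardest point to be making Step 2 honest for the $V^1$ component. There the flow-state gradient is taken with the reference variables $(\vartheta,y)$ frozen, while the policy is evaluated on the diagonal. One must check that the Duhamel estimate holds uniformly in $(\vartheta,y)$, and that the $(s-t)^{-1/2}$ singularity attaches only to the bounded forcing and never to $D_x$ of the Gibbs drift. The factor $\delta(t-\vartheta)$ and the $R_y-R_x$ correction also have to be controlled without any $x$-derivative of $\pi$. I would handle this by keeping all drift terms as forcings, as in the remark after \cref{lem:parabolic-gradient}, and using only the uncontrolled evolution $P^0$.
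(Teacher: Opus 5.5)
Your proposal is correct and follows essentially the same route as the paper. The steps match: the Gibbs–Lipschitz bound on the policy gap by $C\tau^{-1}(\varepsilon+e(t))$; subtraction of the entropy-cancelled systems, keeping one induced drift as the principal coefficient and treating $\Delta B\cdot D_x\widetilde V$ as a bounded forcing; the bounded-drift parabolic lemma; and a half-order fractional Gronwall iteration. You leave two details implicit. First, the lemma's pointwise-in-time bound passes to the tail norm because $e$ is nonincreasing, so the right-hand side at any $t'\ge t$ is dominated by the one at $t$. Second, absorbing the extra factor $1+\tau^{-1}$ into $S_\tau$ requires enlarging the constant inside $E_{1/2}$, not only the prefactor, since $xE_{1/2}(x)\le CE_{1/2}(2x)$ holds but $xE_{1/2}(x)\le CE_{1/2}(x)$ does not.
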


\begin{proof}
Let
$E(t)=\|V_M^\star-V_{\widetilde M}^\star\|_{\mathfrak V,[t,T]}$ and
$\varepsilon=d_1(M,\widetilde M)$.  Bounded derivatives of
$G$ give $\|Z_{V_M^\star}-Z_{V_{\widetilde M}^\star}\|_\infty\le CE(t)$, so the
score difference is bounded by $C\{E(t)+\varepsilon\}$ in oscillation.  Hence
\cref{lem:gibbs-differential} yields
\begin{equation}\label{eq:policy-difference-intermediate}
 d_{{\rm pol},t}(\pi_M^\star,\pi_{\widetilde M}^\star)\le
 \min\left\{2,C\tau^{-1}\{E(t)+\varepsilon\}\right\}.
\end{equation}
Write $\Delta$ for corresponding differences and take all suprema below over the tail
$[t,T]$.  Directly from the action averages, the log-partition identity, and the common
envelope,
\begin{align}
 \|\Delta B\|_\infty
 &\le C\{\varepsilon+d_{{\rm pol},t}
       (\pi_M^\star,\pi_{\widetilde M}^\star)\},\notag\\
 \|\Delta H_\tau\|_\infty&\le C\{\varepsilon+E(t)\},\notag\\
 \|\Delta\Phi^1\|_\infty
 &\le C\{\varepsilon+E(t)+d_{{\rm pol},t}
       (\pi_M^\star,\pi_{\widetilde M}^\star)\}.
                                                        \label{eq:source-differences}
\end{align}
Keep $B_{M,V_M^\star}$ as the principal drift.  The remaining drift forcing is
$\Delta B\cdot D_xV_{\widetilde M}^\star$ and is controlled by the first line of
\eqref{eq:source-differences}, without differentiating an induced drift.  Subtracting
the two cancelled systems, applying \cref{lem:parabolic-gradient}, and then using
\eqref{eq:policy-difference-intermediate} gives
\begin{equation}\label{eq:fractional-value-inequality}
 E(t)\le C\varepsilon+C(1+\tau^{-1})
 \int_t^T\{1+(s-t)^{-1/2}\}\{E(s)+\varepsilon\}\,ds.
\end{equation}
Since $E$ is nonincreasing, the pointwise estimate passes to its tail norm; on a finite
horizon the nonsingular kernel is dominated by the order-$1/2$ kernel.  The fractional
Gronwall iteration from
\cref{thm:fractional-resolvent} proves \eqref{eq:value-stability}--\eqref{eq:S-tau}.
Substitution in \eqref{eq:policy-difference-intermediate} proves
\eqref{eq:policy-stability}.
\end{proof}

At fixed temperature the factor in \eqref{eq:S-tau} can be absorbed into a constant.
If the learned model and temperature vary together within a common envelope, the
theorem gives the sufficient condition
\begin{equation}\label{eq:eehjb-sufficient-consistency}
 \varepsilon_n\tau_n^{-1}\exp(C/\tau_n^2)\longrightarrow0.
\end{equation}
This rate is a worst-case certificate, not a sharp universal boundary.  The exact model
in \cref{sec:exact} has a bounded Volterra kernel and a different, sharp
$e^{C/\tau}$ law.

\subsection{The EEHJB influence system and a delta method}

Let $\theta\in\Theta\subset\R^p$ parameterize $M_\theta=(b_\theta,r_\theta,
F_\theta,h_\theta)$ as in \cref{ass:eehjb-data}; $(\sigma,G)$ remain known.  Fix
$\tau>0$ and suppress
$(\theta,\tau)$ from the equilibrium notation.  Put
\begin{align}
 B(t,x)&=\int_\A b(t,x,a)\pi(t,x,da),\notag\\
 R_y(t,x)&=\int_\A r(y,t,x,a)\pi(t,x,da).             \label{eq:B-R}
\end{align}
For a direction $u\in\R^p$, dots denote derivatives at $\theta$ in direction $u$.
Define
\begin{align}
 \dot Z={}&D_x\dot V^1(t,t,x,x)
 +G_{zz}(t,x,V^2)[\dot V^2]D_xV^2
 +G_z(t,x,V^2)D_x\dot V^2,                            \label{eq:dot-Z}\\
 \dot q(a)={}&\dot b(a)\cdot Z+b(a)\cdot\dot Z
                   +\dot r(x,t,x,a),                   \label{eq:dot-q}\\
 \dot\pi(da)={}&\frac{\pi(da)}\tau
 \left\{\dot q(a)-\int_\A\dot q(a')\pi(da')\right\},   \label{eq:dot-pi}\\
 \dot B={}&\int_\A\dot b(a)\pi(da)+\int_\A b(a)\dot\pi(da),           \label{eq:dot-B}\\
 \dot R_y={}&\int_\A\dot r(y,t,x,a)\pi(da)
       +\int_\A r(y,t,x,a)\dot\pi(da).                 \label{eq:dot-R}
\end{align}

For the classical conclusion, let $\mathfrak V_\eta\subset\mathfrak V$ consist of
pairs with uniform $C_b^{1+\eta/2,2+\eta}$ flow regularity and bounded
$\eta$-H\"older dependence of $V^1,D_xV^1$ on the frozen variables, measured with
$|t-t'|^{1/2}+|x-x'|+|\vartheta-\vartheta'|^{1/2}+|y-y'|$.
The precise Banach norm is recorded in the supplement.  The only trace fact used below is
\begin{equation}\label{eq:diagonal-holder-trace}
 \|\mathsf T V^1\|_{C_b^{\eta/2,\eta}(\mathcal Q)}
 \le C\|V^1\|_{\mathfrak V^1_\eta},
 \qquad 0<\eta\le\gamma.
\end{equation}

Policy tangents are signed kernels, whereas the KL term is evaluated only on
probability kernels.  Let
\[
 \mathfrak P=C_b([0,T]\times\R^d;\mathcal M(\A))
\]
be the Banach space of finite signed kernels with norm
$\|\eta\|_{\mathfrak P}:=\sup_{t,x}\|\eta(t,x)\|_{\rm TV}$.
Policy derivatives lie in its closed zero-mass subspace $\mathfrak P_0$.

For a candidate $V\in\mathfrak V$, use $M=M_\theta$ in
\eqref{eq:cancelled-source}, set $\pi_{\theta,V}=\cG_\tau(q_{\theta,V})$, and define
$U=\mathcal E_{\theta,\tau}(V)$ by
\begin{align}
 U^1(\vartheta,t,y,\cdot)
 ={}&P^0_{t,T}F_\theta(\vartheta,y,\cdot)
 +\int_t^TP^0_{t,s}
 \{B_{\theta,V}\cdot D_xU^1+\Phi^1_{\theta,V}\}(s)\,ds,
                                                        \label{eq:mild-E1}\\
 U^2(t,\cdot)
 ={}&P^0_{t,T}h_\theta
 +\int_t^TP^0_{t,s}
 \{B_{\theta,V}(s)\cdot D_xU^2(s,\cdot)\}\,ds.        \label{eq:mild-E2}
\end{align}
Flow variables are suppressed inside the integrands.  Backward contraction and
continuation give a unique mild solution.  The Gibbs identity shows that the fixed
points of \eqref{eq:mild-E1}--\eqref{eq:mild-E2} are exactly those of the original
Gibbs-substituted system; they are classical whenever they lie in
$\mathfrak V_\eta$.  Define
\begin{equation}\label{eq:value-residual-map}
 \mathfrak F(\theta,V)
 :=V-\mathcal E_{\theta,\tau}(V).
\end{equation}

\begin{proposition}[Twice differentiable mild evaluation]
\label{prop:parabolic-calculus}
Fix $\tau>0$ and suppose the mild clauses of \cref{ass:eehjb-data} hold.  Locally on
every bounded open parameter-value neighborhood on which the stated coefficient bounds hold,
the mild evaluation map $\mathcal E_\tau$ is uniquely defined into $\mathfrak V$
and is twice continuously
Fr\'echet differentiable.  On each sufficiently small bounded convex neighborhood,
\begin{align}
 \|\mathcal E_\tau(z+\zeta)-\mathcal E_\tau(z)\|_{\mathfrak V}
 &\le C_\tau\|\zeta\|,\notag\\
 \|\mathcal E_\tau(z+\zeta)-\mathcal E_\tau(z)
       -D\mathcal E_\tau(z)[\zeta]\|_{\mathfrak V}
 &\le C_\tau\|\zeta\|^2,                              \label{eq:E-C2-main}\\
 \|D\mathcal E_\tau(z+\zeta)-D\mathcal E_\tau(z)\|_{\rm op}
 &\le C_\tau\|\zeta\|.\notag
\end{align}
For a pure value increment $W$, the terminal variation is zero and
\begin{equation}\label{eq:parabolic-calculus-causal}
 \|D_V\mathcal E_{\theta,\tau}(V)W\|_{\mathfrak V,[t,T]}
 \le C(1+\tau^{-1})
 I_{T-}^{1/2}\!\left(\|W\|_{\mathfrak V,[\,\cdot,T]}\right)(t).
\end{equation}
The constants in the differentiability statements are fixed-temperature constants.
\end{proposition}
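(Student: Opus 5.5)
The plan is to view $\mathcal E_\tau$ as the composition of two maps. The first is a smooth Nemytskii-type map $(\theta,V)\mapsto(B_{\theta,V},\Phi^1_{\theta,V},F_\theta,h_\theta)$, which produces bounded coefficients. The second is the solution map of the linear mild system \eqref{eq:mild-E1}--\eqref{eq:mild-E2}, which sends a bounded drift, a bounded source and terminal data in $\mathcal B_x^1$ into $\mathfrak V$.

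\textbf{Linear solution map.} By \cref{lem:parabolic-gradient}, this map is well defined and unique. It is affine in $(\Phi,F,h)$. In the drift $c$ it is smooth: the derivative $\dot U$ solves the same mild equation with the extra forcing $\dot c\cdot D_xU$, and the lemma, in its difference form, bounds $\dot U$ without using derivatives of $c$. Iterating gives second derivatives in $c$ with the same structure, together with Lipschitz bounds on these derivatives. All constants depend only on $(C_0,B_c,T)$, so they are uniform on bounded drift balls.

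\textbf{Nemytskii map.} The score is $q_{\theta,V}=b_\theta\cdot Z_V+r_\theta$. It is $C^2$ from $\R^p\times\mathfrak V$ into bounded, uniformly continuous functions of $(t,x,a)$. This uses three facts: the trace map $\mathsf T$ is bounded linear; the parameter map $\theta\mapsto M_\theta$ is $C^2$ into $\mathfrak C^1$; and $G_z$ enters through $V^2$, so it is $C^2$ with modulus $\omega_{G,R}$ on bounded $z$-ranges by \cref{ass:eehjb-data}(iii). After this, $\cG_\tau$, $H_\tau$ and the action averages defining $B$ and $R_y$ are analytic functions of a bounded score at fixed $\tau>0$. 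Their derivatives carry factors $\tau^{-k}$ and Gibbs covariance terms, and these are uniformly continuous in sup norm. This yields the three estimates in \eqref{eq:E-C2-main} with constants $C_\tau$ on small convex neighbourhoods.

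\textbf{Main obstacle.} The delicate point is uniform continuity of the composed derivatives, which is needed for Fr\'echet (not just G\^ateaux) $C^2$. The uniform-continuity requirements built into $\mathfrak V$ and $\mathcal B_x^1$ have to be preserved by every operation. I would prove this by writing each second derivative as an explicit multilinear expression, namely a Gibbs covariance, a triple moment, $G_{zz}$ or $G_{zzz}$ terms, and products with $D_xV$. I would then estimate the increments of these expressions term by term using the moduli $\omega_{G,R}$ and the Lipschitz property of the Gibbs moments. The first identity in \eqref{eq:E-C2-main} follows from the mean value theorem. The second and third follow from Taylor's formula with a Lipschitz second derivative, which is available only on small neighbourhoods where $z$ stays bounded.

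\textbf{Causal estimate \eqref{eq:parabolic-calculus-causal}.} A pure value increment $W$ leaves $F_\theta$ and $h_\theta$ unchanged, so the terminal variation is zero. The increment of $Z$ at time $s$ is bounded by $C\|W\|_{\mathfrak V,[s,T]}$. Through \cref{lem:gibbs-differential} this gives $\|\dot\pi(s)\|\le C\tau^{-1}\|W\|_{\mathfrak V,[s,T]}$. Consequently $\dot B$ and $\dot\Phi^1$ at time $s$ are bounded by $C(1+\tau^{-1})\|W\|_{\mathfrak V,[s,T]}$; the $H_\tau$ increment contributes no $\tau^{-1}$ factor, since $DH_\tau(q)[h]=\int h\,d\cG_\tau(q)$. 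The linearized mild equation is then the lemma's difference equation with forcing $\dot B\cdot D_xU+\dot\Phi^1$, where $\|D_xU\|$ is bounded. Applying \cref{lem:parabolic-gradient} on $[t,T]$ bounds the result by $C(1+\tau^{-1})\int_t^T\{1+(s-t)^{-1/2}\}\|W\|_{\mathfrak V,[s,T]}\,ds$. On a finite horizon the kernel $1+(s-t)^{-1/2}$ is dominated by a multiple of $(s-t)^{-1/2}$, so this integral is at most a constant times $I_{T-}^{1/2}$ of the tail norm. Finally, the left-hand side is a tail supremum over $t'\ge t$, and the right-hand side is nonincreasing in $t$ because the tail norm is monotone and the integral is taken over a shrinking interval; hence the bound at each $t'\ge t$ passes to the tail sup.
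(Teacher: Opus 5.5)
Your proposal is correct and is essentially the paper's argument. The supplement first shows that the drift, cancelled source, and terminal data are $C^2$ in supremum norms, using the Gibbs derivative formulas and the bounded diagonal trace. It then writes the first and second variations, and the Taylor remainders, as linear mild equations with the perturbed drift kept in the principal resolvent, and closes them with \cref{lem:parabolic-gradient}. That is your Nemytskii-map-plus-linear-solution-map factorization written out, and the causal estimate \eqref{eq:parabolic-calculus-causal} is obtained exactly as you describe.

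One small correction: the quadratic remainder and the Lipschitz bound on $D\mathcal E_\tau$ need only a second derivative that is bounded on a small convex neighbourhood. A Lipschitz second derivative is not available under the hypotheses, since $G_{zzz}$ is merely uniformly continuous and $\theta\mapsto M_\theta$ is merely $C^2$; that is why the paper gets only a modulus $\omega_\tau$ for the continuity of $D^2\mathcal E_\tau$. The Taylor step should therefore be phrased with the bounded, continuous $D^2$.
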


\begin{proof}
The bounded diagonal trace and fixed-temperature Gibbs derivative bounds make the
induced drift and cancelled source $C^2$ in the relevant supremum norms.
Differentiating once and twice gives linear parabolic equations, to which
\cref{lem:parabolic-gradient} applies, proving \eqref{eq:E-C2-main}; the remainder
equations and continuity of the second derivative are in the supplement.  For a pure
value direction the terminal variation vanishes and the forcing at time $s$ is bounded
by $C(1+\tau^{-1})\|W\|_{\mathfrak V,[s,T]}$, which gives
\eqref{eq:parabolic-calculus-causal}.
\end{proof}

\begin{proposition}[Classical regularity of a mild fixed point]
\label{prop:mild-to-classical}
Suppose the mild clauses and the classical add-on of \cref{ass:eehjb-data} hold.  If
$V\in\mathfrak V$ satisfies $V=\mathcal E_{\theta,\tau}(V)$ for fixed $\tau>0$,
then $V\in\mathfrak V_\eta$ for every $\eta\in(0,\gamma)$.  Hence the mild fixed
point is a classical solution of the entropy-cancelled system and of the original
Gibbs-substituted EEHJB system.
\end{proposition}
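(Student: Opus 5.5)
The argument is a Schauder bootstrap along the mild fixed point.  Its input is the mild data of the fixed point.  The core observation is that, once $V\in\mathfrak V$ is fixed, the induced drift $B_{\theta,V}$ and cancelled source $\Phi^1_{\theta,V}$ are fixed coefficients.  The nonlinear system therefore becomes a pair of \emph{linear} parabolic equations,
\[
 \partial_tU+\mathcal L^0_tU+B_{\theta,V}\cdot D_xU+f=0,
\]
with frozen $(\vartheta,y)$ in the first component and $U=V$.  Classical regularity then follows once these coefficients are H\"older continuous in the parabolic metric, uniformly in the frozen variables.  The plan therefore has two stages: first raise the regularity of $V$ from $\mathfrak V$ to a H\"older scale using only the mild representation and \eqref{eq:base-holder}, then apply linear Schauder theory.

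\emph{Step 1 (H\"older gradient from the mild formula).}  The inputs $B_{\theta,V}$ and $\Phi^1_{\theta,V}$ are bounded by the data bounds and $\|V\|_{\mathfrak V}$, and both are bounded at fixed $\tau$.  Write $D_xV^1=D_xP^0_{t,T}F+\int_t^TD_xP^0_{t,s}\{B\cdot D_xV^1+\Phi^1\}\,ds$.  The first line of \eqref{eq:base-holder} gives spatial $C^\eta$ seminorms with the integrable singularity $(s-t)^{-(1+\eta)/2}$, since $\eta<1$.  The second line gives $\eta/2$-H\"older continuity in $t$; the short interval $[t,t']$ is handled by the gradient bound in \eqref{eq:base-semigroup}.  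The terminal term uses $F^\sharp\in C_b^{2+\gamma}$ and $h\in C_b^{2+\gamma}$.  Hence $D_xV^1$ and $D_xV^2$ lie in $C_b^{\eta/2,\eta}$ in the flow variables.

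For the frozen variables, subtract the mild equations at $(\vartheta,y)$ and $(\vartheta',y')$.  The data differences are $\gamma$-H\"older in $\rho_{\rm ref}$, by the classical add-on for $F^\sharp$ and for $r$ through $R_y$, and $\delta$ is $C^1$.  The resulting linear equation for the difference, with the same drift, is controlled by \cref{lem:parabolic-gradient}, which yields the H\"older dependence on $(\vartheta,y)$.  By \eqref{eq:diagonal-holder-trace}, $\mathsf TV^1\in C_b^{\eta/2,\eta}(\mathcal Q)$, and therefore $Z_V$ is H\"older.  Here I use that $\partial_z^jG$ is H\"older and that $V^2$ is Lipschitz in $x$ and H\"older in $t$, the latter again from the mild formula.

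\emph{Step 2 (H\"older coefficients).}  The score $q_{\theta,V}=b\cdot Z_V+r$ is H\"older uniformly in $a$.  At fixed $\tau$ the maps $q\mapsto\cG_\tau(q)$ and $q\mapsto H_\tau(q)$ are Lipschitz in sup norm by \cref{lem:gibbs-differential}, with constant $\tau^{-1}$.  Consequently $B_{\theta,V}$, $R_{y}$, $H_\tau(q)$ and $\Phi^1_{\theta,V}$ are all in $C_b^{\eta/2,\eta}$, uniformly in the frozen variables.  The linear Schauder estimate for $\partial_t+\mathcal L^0_t+B\cdot D_x$, with terminal data in $C_b^{2+\gamma}$, now identifies the mild solution with the unique classical $C_b^{1+\eta/2,2+\eta}$ solution.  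Uniqueness of bounded mild solutions (\cref{lem:parabolic-gradient}) ensures this identification is legitimate.  Applying the same estimate to difference quotients in $(\vartheta,y)$ gives the frozen-variable H\"older bound in the norm of $\mathfrak V_\eta$, so $V\in\mathfrak V_\eta$.

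\emph{Step 3 (equivalence).}  As a classical solution of the cancelled system, $V$ satisfies \eqref{eq:general-V1}--\eqref{eq:general-V2} with $\pi=\pi_{\theta,V}$.  This follows by reversing the Gibbs log-partition identity pointwise, namely $H_\tau(q)=\int q\,d\pi-\tau\KL(\pi\Vert\mu)$ at $\pi=\cG_\tau(q)$.  The main obstacle I expect is Step 1: obtaining H\"older continuity of $D_xV^1$ jointly in the frozen variables and on the diagonal, with only the singular kernel bounds available.  The time-H\"older estimate near $s=t'$ must be split carefully so that the singularities remain integrable, which is why $\eta<\gamma<1$ is needed.
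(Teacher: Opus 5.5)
Your proposal follows the paper's own bootstrap essentially step for step. The shared steps are: H\"older flow bounds for $D_xV$ from the mild formula and \eqref{eq:base-holder}; frozen-pair differences controlled by \cref{lem:parabolic-gradient}; H\"older continuity of the diagonal trace and hence of the Gibbs-induced coefficients at fixed $\tau$; a per-pair linear Schauder solution identified with $V$ by mild uniqueness; and the Gibbs log-partition identity for the equivalence.

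Two small repairs are needed. First, citing \eqref{eq:diagonal-holder-trace} is circular, because its right-hand side is the $\mathfrak V^1_\eta$ norm you are trying to bound. Instead, combine your flow and reference estimates directly, as the paper does: move to the later flow time first, then change the frozen pair. Second, the Schauder estimate on frozen-pair difference quotients is unnecessary. $[V^1]_{\eta,*}$ and $[D_xV^1]_{\eta,*}$ already follow from your $\mathcal B^1_x$ reference bound together with the uniform per-pair flow norms. A Schauder bound for differences would also need time-H\"older control of source differences such as $R_y-R_{y'}$ with a factor $\rho_{\rm ref}^\eta$. The joint $\gamma$-H\"older hypothesis on $r$ does not directly give this; interpolation yields only the factor $\rho_{\rm ref}^{\gamma-\eta}$.
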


The parabolic bootstrap and the reference-variable difference estimate are given in
the supplementary material.

\begin{theorem}[Forced tangent EEHJB and statistical delta method]
\label{thm:influence-clt}
Under the mild clauses of \cref{ass:eehjb-data}, let $\theta_0$ be interior to
$\Theta$ and suppose
$\theta\mapsto M_\theta$ is $C^2$ into the stated coefficient spaces on an open
neighborhood of $\theta_0$.  Fix $\tau>0$ and suppose $(V_0,\pi_0)$ is one mild
equilibrium at $\theta_0$.  Then there is a neighborhood $U$ of $\theta_0$ and a
unique local mild equilibrium branch
$\Psi_\tau(\theta):=(V^\star_{\theta,\tau},\pi^\star_{\theta,\tau})$ from $U$ into
$\mathfrak V\times\mathfrak P$, with
$\Psi_\tau(\theta_0)=(V_0,\pi_0)$.  Uniqueness is among zeros whose value lies in a
fixed $\mathfrak V$-neighborhood of $V_0$.  The branch is $C^2$ at fixed temperature.  Its
derivative is the unique mild solution of
\eqref{eq:dot-Z}--\eqref{eq:dot-R} and
\begin{align}
 0={}&\partial_t\dot V^1+\tfrac12\operatorname{tr}
 (\sigma\sigma^\top D_x^2\dot V^1)+B\cdot D_x\dot V^1
 +\dot B\cdot D_xV^1\notag\\
 &+\delta(t-\vartheta)\left[
   \dot R_y
   -\tau\int_\A\log\frac{d\pi}{d\mu}(a)\,\dot\pi(da)
 \right],
 \qquad \dot V^1(\vartheta,T,y,x)=\dot F(\vartheta,y,x),
                                                               \label{eq:tangent-V1}\\
 0={}&\partial_t\dot V^2+\tfrac12\operatorname{tr}
 (\sigma\sigma^\top D_x^2\dot V^2)+B\cdot D_x\dot V^2
 +\dot B\cdot D_xV^2,
 \qquad \dot V^2(T,x)=\dot h(x).                         \label{eq:tangent-V2}
\end{align}
Here $\delta$ is the fixed discount kernel; omit it when discounting has already been
absorbed into $r$.  Under the classical add-on of \cref{ass:eehjb-data}, every value
on this branch belongs to $\mathfrak V_\eta$, $0<\eta<\gamma$, and is classical.

More explicitly, there is a modulus $\omega_\tau(r)\downarrow0$ as $r\downarrow0$
such that
\begin{equation}\label{eq:frechet-remainder}
 \|\Psi_\tau(\theta_0+v)-\Psi_\tau(\theta_0)
     -D\Psi_\tau(\theta_0)[v]\|_{\mathfrak V\times\mathfrak P}
 \le \omega_\tau(\|v\|)\|v\|.
\end{equation}
No uniformity of $\omega_\tau$ as $\tau\downarrow0$ is asserted.

The statistical input below is the finite-dimensional parameter
$\theta\in\R^p$; the output is function valued.  We do not claim a nonparametric
coefficient-process delta method.

If an estimator satisfies
\begin{equation}\label{eq:theta-clt}
 \sqrt n(\widehat\theta_n-\theta_0)\Rightarrow\Xi\quad\text{in }\R^p,
\end{equation}
then $\Pp(\widehat\theta_n\in U)\to1$, and on this event
\begin{equation}\label{eq:equilibrium-functional-clt}
 \sqrt n\left\{(V^\star_{\widehat\theta_n,\tau},
 \pi^\star_{\widehat\theta_n,\tau})
 -(V^\star_{\theta_0,\tau},\pi^\star_{\theta_0,\tau})\right\}
 \Rightarrow(\dot V[\Xi],\dot\pi[\Xi]).
\end{equation}
The explicit local Gibbs factor in \eqref{eq:dot-pi} contributes $1/\tau^2$ to a
variance only when a nonvanishing tied score contrast survives under the Gibbs law.
The full tangent variance can additionally contain the square of the Volterra factor
in \eqref{eq:tangent-inverse-bound}; conversely, Gibbs concentration at a separated,
strongly concave maximizer can cancel part of the local factor.
\end{theorem}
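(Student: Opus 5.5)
The plan is to apply the implicit function theorem to the residual map $\mathfrak F(\theta,V)=V-\mathcal E_{\theta,\tau}(V)$ of \eqref{eq:value-residual-map} on $\Theta\times\mathfrak V$, at the point $(\theta_0,V_0)$.

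\emph{Regularity of $\mathfrak F$.} Since $\theta\mapsto M_\theta$ is $C^2$ into $\mathfrak C^1$, and the mild evaluation map is $C^2$ jointly in coefficients and values by \cref{prop:parabolic-calculus}, the composition $\mathfrak F$ is $C^2$ near $(\theta_0,V_0)$. Moreover $\mathfrak F(\theta_0,V_0)=0$.

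\emph{Invertibility of $D_V\mathfrak F=I-D_V\mathcal E$.} This is the step where the causal structure is essential. By \eqref{eq:parabolic-calculus-causal}, the operator $K:=D_V\mathcal E_{\theta_0,\tau}(V_0)$ satisfies a tail-norm bound with the order-$1/2$ Riemann--Liouville kernel and constant $\kappa_\tau=C(1+\tau^{-1})$. Iterating, exactly as in the proof of \cref{thm:fractional-resolvent}, gives
\[
 \|K^mW\|_{\mathfrak V,[t,T]}\le
 \frac{\kappa_\tau^m(T-t)^{m/2}}{\Gamma(m/2+1)}\|W\|_{\mathfrak V}.
\]
Hence the Neumann series $\sum_m K^m$ converges in operator norm, and
\[
 \|(I-K)^{-1}\|\le E_{1/2}\bigl(\kappa_\tau T^{1/2}\bigr).
\]
This is the bound I would record as \eqref{eq:tangent-inverse-bound}. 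Injectivity follows from the same iteration applied to a null vector.

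I expect the main obstacle to lie here. One must check that the tail-norm estimate is genuinely monotone in $t$, so that it iterates cleanly; that $\|W\|_{\mathfrak V,[\cdot,T]}$ is measurable and nonincreasing; and that the diagonal trace $\mathsf T$ does not destroy causality. The last point holds because the value at flow time $t$ uses only $W$ on $[t,T]$. I would argue these from the mild formula \eqref{eq:mild-E1}--\eqref{eq:mild-E2}, in which only $s\ge t$ appears.

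\emph{The branch and its derivative.} The Banach-space implicit function theorem then gives a neighborhood $U$ and a unique $C^2$ map $\theta\mapsto V^\star_{\theta,\tau}$ with $\mathfrak F(\theta,V^\star_{\theta,\tau})=0$. Uniqueness holds in a $\mathfrak V$-ball around $V_0$. Set $\pi^\star_{\theta,\tau}=\cG_\tau(q_{\theta,V^\star_{\theta,\tau}})$. This policy is $C^2$ into $\mathfrak P$, because the Gibbs map is smooth from bounded scores to TV at fixed $\tau$ (\cref{lem:gibbs-differential}, plus one more differentiation), and $V\mapsto q_{\theta,V}$ is affine-bounded through $\mathsf T$ and $G$.

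The derivative is $\dot V=(I-K)^{-1}D_\theta\mathcal E[u]$. To identify it, differentiate \eqref{eq:mild-E1}--\eqref{eq:mild-E2} along the branch. The derivative of the cancelled source $\Phi^1$ simplifies via $D H_\tau(q)=\pi$. Back in the uncancelled form, this reproduces \eqref{eq:dot-Z}--\eqref{eq:dot-R} and the mild versions of \eqref{eq:tangent-V1}--\eqref{eq:tangent-V2}; the term $-\tau\int\log\frac{d\pi}{d\mu}\,d\dot\pi$ arises as the derivative of $\tau\KL$ on zero-mass tangents. Uniqueness of this tangent solution is the invertibility already shown. The classical statement follows from \cref{prop:mild-to-classical} applied at each $\theta\in U$. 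The remainder bound \eqref{eq:frechet-remainder} is just Fréchet differentiability of $\Psi_\tau$; in fact $C^2$ gives $\omega_\tau(r)=C_\tau r$.

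\emph{Delta method.} Given \eqref{eq:theta-clt}, $\widehat\theta_n\to\theta_0$ in probability, so $\Pp(\widehat\theta_n\in U)\to1$. On this event, write
\[
 \sqrt n\{\Psi_\tau(\widehat\theta_n)-\Psi_\tau(\theta_0)\}
 =D\Psi_\tau(\theta_0)[\sqrt n(\widehat\theta_n-\theta_0)]+R_n,
\]
where
\[
 \|R_n\|\le\omega_\tau(\|\widehat\theta_n-\theta_0\|)\,\|\sqrt n(\widehat\theta_n-\theta_0)\|=o_\Pp(1).
\]
The continuous mapping theorem applied to the bounded linear map $D\Psi_\tau(\theta_0):\R^p\to\mathfrak V\times\mathfrak P$, together with Slutsky's lemma in the separable range (a finite-dimensional image), yields \eqref{eq:equilibrium-functional-clt}.

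The closing variance remarks follow by reading off the factor $\pi/\tau$ in \eqref{eq:dot-pi} and the bound on $(I-K)^{-1}$.
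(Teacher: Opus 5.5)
Your proposal is correct and follows essentially the same route as the paper. Both arguments use $C^2$ regularity of the residual $\mathfrak F$ from \cref{prop:parabolic-calculus} and invert $D_V\mathfrak F=I-\mathcal L_\tau$ by iterating the order-$1/2$ causal bound \eqref{eq:parabolic-calculus-causal} into a convergent Neumann series with the $E_{1/2}$ bound; both then apply the $C^2$ Banach implicit-function theorem, identify the tangent system by direct differentiation (with $\int\dot\pi=0$ removing the constant in the KL derivative), obtain classicality from \cref{prop:mild-to-classical}, and prove the delta method by the continuous mapping theorem on the finite-dimensional range of $D\Psi_\tau(\theta_0)$ plus the $o_{\Pp}(1)$ scaled remainder (your local choice $\omega_\tau(r)=C_\tau r$ matches the quadratic remainder the paper records).
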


\begin{proof}
For fixed $\tau$, \cref{prop:parabolic-calculus} makes the entropy-cancelled residual
$C^2$.  Its value derivative is $D_V\mathfrak F=I-\mathcal L_\tau$, where
\eqref{eq:parabolic-calculus-causal} bounds every causal power of
$\mathcal L_\tau$ by the corresponding Gamma-denominator term.  Induction gives
\[
 \|\mathcal L_\tau^mW\|_{\mathfrak V,[t,T]}
 \le \frac{[C(1+\tau^{-1})]^m(T-t)^{m/2}}
               {\Gamma(m/2+1)}\|W\|_{\mathfrak V}.
\]
Thus the Neumann series converges in operator norm and is the inverse of
$I-\mathcal L_\tau$; in particular,
\begin{equation}\label{eq:tangent-inverse-bound}
 \|(D_V\mathfrak F)^{-1}\|
 \le C E_{1/2}\!\left(C(1+\tau^{-1})T^{1/2}\right).
\end{equation}
The $C^2$ Banach implicit-function theorem applies and gives
\[
 \|\dot V[u]\|_{\mathfrak V}\le S_\tau\|u\|,
 \qquad
 \|\dot\pi[u]\|_{\mathfrak P}\le C\tau^{-1}(1+S_\tau)\|u\|.
\]
The $C^1$ implicit map gives \eqref{eq:frechet-remainder}; for
$v_n=\widehat\theta_n-\theta_0=O_{\Pp}(n^{-1/2})$, its scaled remainder is
$o_{\Pp}(1)$.  Direct differentiation gives
\eqref{eq:dot-Z}--\eqref{eq:tangent-V2}; the constant term in the KL derivative
vanishes because $\int\dot\pi=0$.  The continuous mapping theorem on the
finite-dimensional range of $D\Psi_\tau(\theta_0)$ then proves
\eqref{eq:equilibrium-functional-clt}, without a separability claim for the ambient
supremum spaces \cite{VanDerVaart1998}.  Classicality follows from
\cref{prop:mild-to-classical}.
\end{proof}

\begin{corollary}[A self-contained open class]
\label{cor:local-existence-class}
Fix $\tau>0$ and let a finite-dimensional family $M_\theta$ satisfy
\cref{ass:eehjb-data}.  At $\theta=0$, suppose
\[
 b_0(t,x,a)=\bar b(t,x),
 \qquad r_0(y,t,x,a)=\bar r(y,t,x).
\]
The two linear terminal problems at $\theta=0$ have a unique classical pair $V_0$,
and $\pi_0=\mu$.  There is $\varepsilon>0$ such that every
$|\theta|<\varepsilon$ has a unique equilibrium in a fixed $\mathfrak V$-neighborhood
of $V_0$.  It is classical, and the value and policy are $C^2$ in $\theta$.
\end{corollary}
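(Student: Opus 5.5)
The plan is to reduce the corollary to \cref{thm:influence-clt}. That theorem supplies the local branch, its $C^2$ regularity and its classicality, provided we exhibit one mild equilibrium at $\theta=0$. So the work is to construct $(V_0,\pi_0)$ explicitly and check that it is a mild fixed point of $\mathcal E_{0,\tau}$.

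\textbf{Step 1: the base point.} At $\theta=0$ neither the drift nor the reward depends on the action. For any candidate $V$, the score $q_{0,V}(t,x,a)=\bar b(t,x)\cdot Z_V(t,x)+\bar r(x,t,x)$ is therefore constant in $a$. The Gibbs map \eqref{eq:gibbs-map} then returns $\cG_\tau(q_{0,V})=\mu$ regardless of $V$. Consequently $B_{0,V}=\bar b$ and $R_{0,y,V}=\bar r(y,\cdot,\cdot)$. The cancelled source \eqref{eq:cancelled-source} also simplifies: $H_\tau(q)=q$ for an action-constant $q$, so
\[
\Phi^1_{0,V}=\delta(t-\vartheta)\{\bar r(y,t,x)-\bar r(x,t,x)+\bar r(x,t,x)\}=\delta(t-\vartheta)\bar r(y,t,x),
\]
which is independent of $V$. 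Hence $\mathcal E_{0,\tau}$ is the constant map sending every $V$ to the solution of two linear terminal problems. The first is \eqref{eq:general-V1} with drift $\bar b$ and source $\delta(t-\vartheta)\bar r$; the KL term vanishes because the policy is $\mu$. The second is \eqref{eq:general-V2} with drift $\bar b$.

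We let $V_0$ be their unique mild solutions. Uniqueness and membership in $\mathfrak V$ follow from \cref{lem:parabolic-gradient}, the bounded-drift estimate, together with $F_0\in\mathcal B^1_x$ and $h_0\in\mathcal B^1_x$. For the frozen variables we use the uniform-in-$(\vartheta,y)$ bounds. Since the map is constant, $V_0$ is a fixed point, and $\pi_0=\pi_{0,V_0}=\mu$. Classicality of $V_0$ follows from \cref{prop:mild-to-classical} under the classical add-on (item (iv)), which is included in \cref{ass:eehjb-data}.

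\textbf{Step 2: invoke the branch theorem.} Apply \cref{thm:influence-clt} at $\theta_0=0$, which is interior by hypothesis (take $\Theta$ open), with the equilibrium $(V_0,\mu)$. Its proof shows that $D_V\mathfrak F=I-\mathcal L_\tau$ is invertible via the causal Neumann series bound \eqref{eq:tangent-inverse-bound}. The implicit function theorem then yields $\varepsilon>0$ and a $\mathfrak V$-neighborhood $\mathcal N$ of $V_0$ such that, for $|\theta|<\varepsilon$, $\mathfrak F(\theta,\cdot)$ has exactly one zero in $\mathcal N$. The map $\theta\mapsto V^\star_\theta$ is $C^2$.

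The policy $\pi^\star_\theta=\cG_\tau(q_{\theta,V^\star_\theta})$ is also $C^2$ into $\mathfrak P$. This is the composition of a $C^2$ map $(\theta,V)\mapsto q_{\theta,V}$, using the bounded diagonal trace \eqref{eq:diagonal-timeholder-trace-weak} and the smoothness of $G$, with the Gibbs map. At fixed $\tau>0$ the Gibbs map is smooth from $L^\infty$ into TV, as in \cref{lem:gibbs-differential}. Classicality of each branch value again follows from \cref{prop:mild-to-classical}.

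\textbf{Main obstacle.} The only delicate point is that $V_0$ lies in $\mathfrak V$ with uniform continuity in all variables, including the frozen $(\vartheta,y)$. We need this both for \cref{thm:influence-clt} to apply and for the neighborhood to be well defined. The route is to write $V_0^1$ by Duhamel with $P^0$ and the bounded drift $\bar b$, and then apply \cref{lem:parabolic-gradient} to differences in $(\vartheta,y)$. Because the equations are linear, these differences are governed by the moduli of continuity of $F_0$, $\bar r$ and $\delta$ in the reference variables, which are uniform by the definition of $\mathcal B^1_x$.

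Note that the diagonal-trace reference in Step 2 should read \eqref{eq:diagonal-trace-weak}.
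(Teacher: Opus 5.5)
Your proposal is correct and follows the paper's argument. The action-independent score forces $\pi_{0,V}=\mu$, entropy cancellation makes $\mathcal E_{0,\tau}$ constant in $V$, and the implicit-function theorem together with \cref{prop:mild-to-classical} then gives the branch, its local uniqueness, and classicality. The only difference is that the paper reads off $D_V\mathfrak F(0,V_0)=I$ directly from that constancy, whereas you route through \cref{thm:influence-clt} and its Neumann-series inverse, which is trivial at this base point because $\mathcal L_\tau=0$.
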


\begin{proof}
At the base model the score is constant over actions for every candidate, so
$\pi_{0,V}=\mu$ and entropy cancellation makes
$\mathcal E_{0,\tau}(V)=V_0$ independent of $V$.  Hence
$D_V\mathfrak F(0,V_0)=I$.  The mild-evaluation proposition and the Banach
implicit-function theorem give the local branch and its uniqueness.  Classicality
follows from \cref{prop:mild-to-classical}.
\end{proof}

The class is genuinely state dependent: bounded smooth perturbations of both reward
and controlled drift give a spatially varying first policy derivative.  A concrete
two-parameter construction, including direct verification of
\cref{ass:eehjb-data}, is given in the supplement.

The theorem keeps $\tau>0$ fixed and does not justify a triangular array
$\tau_n\downarrow0$ without uniform control of the local neighborhood and quadratic
remainder.  Precise sufficient conditions are stated in the supplement.  Likewise, a
numerical equilibrium error $o_{\Pp}(n^{-1/2})$ preserves the limit law, but turning
this observation into a stopping rule requires a separate a posteriori error bound.

\section{Exact models and feedback graphs}\label{sec:exact}

Section~\ref{sec:volterra} leaves the upper bound unmatched unless a positive return mode
is present.  The two models below make that return mechanism explicit.  The affine model
gives a closed nonlinear statistical transition, but its linearly growing payoffs place
it outside the bounded framework of \cref{sec:stability}.  The trigonometric model has
bounded smooth data and a uniformly nondegenerate diffusion.

\subsection{An affine reference-state diffusion}

Let $\A=\{-1,+1\}$, let $\mu$ be uniform, and write
\begin{equation}\label{eq:binary-mean}
 m^\pi(t,x)=\int_\A a\,\pi(t,x,da).
\end{equation}
The two-point space is compact and $\mu$ charges both points, so this is a direct
instance of the action-space setup in \cref{sec:model}; no limiting argument is used.
For $\sigma>0$, consider
\begin{equation}\label{eq:affine-state}
 dX_s=m^\pi(s,X_s)\,ds+\sigma\,dW_s.
\end{equation}
The self with frozen reference state $y$ evaluates
\begin{align}
 J^{\pi,\xi}(y;t,x)=\E_{t,x}\bigg[\int_t^T
 \bigg\{\beta(X_s-y)m^\pi(s,X_s)
 -\tau\KL\bigl(\pi(s,X_s)\Vert\mu\bigr)\bigg\}\,ds
 +\xi X_T\bigg],                                      \label{eq:affine-objective}
\end{align}
where $\beta>0$.  Immediate rewards are tied on the acting diagonal $y=x$.
Nevertheless, the current drift changes the state from which future selves evaluate
their reference-state reward.

\begin{theorem}[Exact affine cascade]\label{thm:exact-affine}
For every $(\tau,\xi)\in(0,\infty)\times\R$, the model
\eqref{eq:affine-state}--\eqref{eq:affine-objective} has a unique equilibrium in the
affine class
\begin{equation}\label{eq:affine-value-ansatz}
 V^\xi(y;t,x)=w_\xi(t)(x-y)+\xi y+c_\xi(t).
\end{equation}
It is state independent and satisfies
\begin{align}
 m_\tau^\xi(t)&=\tanh\!\left(\frac{w_\xi(t)}\tau\right),
 &w_\xi'(t)+\beta m_\tau^\xi(t)&=0,
 &w_\xi(T)&=\xi.                                      \label{eq:affine-ode}
\end{align}
More explicitly,
\begin{align}
 \sinh\!\left(\frac{w_\xi(t)}\tau\right)
 &=e^{\beta(T-t)/\tau}\sinh(\xi/\tau),               \label{eq:exact-w}\\
 m_\tau^\xi(t)
 &=\frac{z_\tau^\xi(t)}{\sqrt{1+z_\tau^\xi(t)^2}},
 &z_\tau^\xi(t)&=e^{\beta(T-t)/\tau}\sinh(\xi/\tau).
                                                               \label{eq:exact-m}
\end{align}
At the tied model $\xi=0$, the policy susceptibility is
\begin{equation}\label{eq:exact-affine-susceptibility}
 \chi_\tau(t):=\left.\partial_\xi m_\tau^\xi(t)\right|_{\xi=0}
 =\frac1\tau e^{\beta(T-t)/\tau}.
\end{equation}
\end{theorem}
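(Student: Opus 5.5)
Substitute the affine ansatz \eqref{eq:affine-value-ansatz} into the policy-evaluation PDE \eqref{eq:policy-evaluation-pde} and the score map \eqref{eq:score-map}, and reduce the fixed point to a scalar ODE. For $V(y;t,x)=w(t)(x-y)+\xi y+c(t)$ the flow-state gradient is $D_xV=w(t)$, independent of $(y,x)$. The score of the acting self at $(t,x)$ is therefore
\[
Q(t,x,a)=a\,w(t)+\beta(x-x)a=a\,w(t).
\]
The Gibbs map with uniform $\mu$ on $\{-1,+1\}$ gives $\pi(t,\pm1)\propto e^{\pm w/\tau}$, so $m(t)=\tanh(w(t)/\tau)$. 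The policy is state independent because $w$ is. Conversely, every equilibrium in the affine class has this form, since the class forces $D_xV$ to be state independent.

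Next I insert the resulting state-independent policy into the evaluation PDE. The second-order term vanishes and the drift term equals $m(t)w(t)$. The running reward is $\beta(x-y)m(t)$, and the entropy term $\tau\KL(\pi\Vert\mu)=:k(t)$ is state independent. Matching coefficients of $(x-y)$ yields $w'+\beta m=0$. The remaining coefficients give
\[
c'+mw-\tau k=0,\qquad c(T)=0.
\]
The $y$-coefficient matches identically because the terminal condition $\xi X_T=\xi(x-y)+\xi y$ fixes $w(T)=\xi$. These identities give \eqref{eq:affine-ode}, and $c$ is then determined by quadrature. I will also note that the affine $V$ is the actual value: the coefficients are Lipschitz with linear growth, so a Feynman--Kac/Itô verification applies. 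This is the one point where I must remark that the bounded framework is not needed.

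It remains to solve $w'=-\beta\tanh(w/\tau)$ with $w(T)=\xi$. The right side is globally Lipschitz with constant $\beta/\tau$, so the backward solution exists and is unique on $[0,T]$; this gives uniqueness within the affine class. Separation of variables uses $\frac{d}{dt}\log\sinh(w/\tau)=\frac{w'}{\tau}\coth(w/\tau)=-\beta/\tau$ wherever $w\ne0$. Hence $\sinh(w/\tau)$ satisfies the linear ODE $\frac{d}{dt}\sinh(w/\tau)=-\frac{\beta}{\tau}\sinh(w/\tau)$ everywhere, including when $\xi=0$, where $w\equiv0$. Integrating this linear ODE gives \eqref{eq:exact-w}. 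Then $\tanh(\operatorname{arcsinh}z)=z/\sqrt{1+z^2}$ gives \eqref{eq:exact-m}.

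For the susceptibility, differentiate \eqref{eq:exact-m} in $\xi$ at $\xi=0$. There $z=0$, the map $z\mapsto z/\sqrt{1+z^2}$ has derivative $1$, and $\partial_\xi z=e^{\beta(T-t)/\tau}\tau^{-1}\cosh(0)$. This yields \eqref{eq:exact-affine-susceptibility}.

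The only step needing real care is the verification that the affine function actually solves the evaluation problem with linearly growing payoffs. The algebra itself is routine. For that step I would use the moment bounds of the Gaussian-type SDE \eqref{eq:affine-state} with bounded drift and then apply Itô's formula directly.
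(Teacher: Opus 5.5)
Your proposal is correct and follows essentially the same route as the paper. You substitute the affine ansatz, read off the score $a\,w(t)$ from the flow-state (not total) derivative, and obtain $m=\tanh(w/\tau)$ and $w'+\beta m=0$. You then use global Lipschitz continuity for uniqueness, integrate through $\sinh(w/\tau)$, and differentiate at $\xi=0$, exactly as the paper does.

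Your direct chain-rule derivation of the linear ODE for $\sinh(w/\tau)$ is slightly cleaner than the paper's ``away from zero plus continuity'' step. The only blemish is a harmless slip: you define $k=\tau\KL(\pi\Vert\mu)$ and then write $-\tau k$ in the equation for $c$.
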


\begin{proof}
Under a state-independent policy with mean $m(t)$, substituting
\eqref{eq:affine-value-ansatz} into the linear evaluation PDE and comparing the
coefficient of $x-y$ gives
$w'(t)+\beta m(t)=0$ and $w(T)=\xi$.  The flow-state partial derivative is
$V_x=w(t)$.  By contrast, the total derivative of the diagonal map is
$dV(x;t,x)/dx=V_x+V_y=\xi$; replacing the former by the latter would incorrectly
erase the intertemporal effect.

At the acting diagonal, action $a$ has score $aw(t)$, so the binary Gibbs identity
gives $m(t)=\tanh(w(t)/\tau)$ and \eqref{eq:affine-ode}.  The remaining scalar
coefficient solves a linear terminal equation and does not affect the policy.
Global Lipschitz continuity gives uniqueness; the resulting bounded deterministic
drift makes $X$ Gaussian with finite moments, while
$\KL(\pi_s\Vert\mu)\le\log2$, proving admissibility.  Finally,
\[
 \frac{d}{dt}\log\left|\sinh\frac{w_\xi(t)}\tau\right|
 =-\frac\beta\tau
\]
away from zero; continuity covers the zero solution.  Integration, followed by
$\tanh u=\sinh u/\sqrt{1+\sinh^2u}$, proves
\eqref{eq:exact-w}--\eqref{eq:exact-m}.  Differentiation at $\xi=0$ gives
\eqref{eq:exact-affine-susceptibility}.
\end{proof}

The formula yields an exact joint statistical and annealing limit, without requiring a
uniform delta-method remainder.

\begin{theorem}[Root-$n$ phase transition]\label{thm:root-n-phase}
Let $\widehat\xi_n$ satisfy
$Z_n:=\sqrt n\,\widehat\xi_n\Rightarrow Z$, where $Z$ is nondegenerate and
$\Pp(Z=0)=0$, and let
$\tau_n\downarrow0$ with $\sqrt n\,\tau_n\to\infty$.  Define
\begin{equation}\label{eq:a-n}
 a_n=\frac{e^{\beta T/\tau_n}}{\sqrt n\,\tau_n}.
\end{equation}
Then the time-zero equilibrium obeys
\begin{equation}\label{eq:three-phase-law}
 m_{\tau_n}^{\widehat\xi_n}(0)\Rightarrow
 \begin{cases}
 0, & a_n\to0,\\[1mm]
 \displaystyle\frac{aZ}{\sqrt{1+a^2Z^2}},
     &a_n\to a\in(0,\infty),\\[3mm]
 \operatorname{sign}(Z),&a_n\to\infty.
 \end{cases}
\end{equation}
The exact unit-amplification temperature and its first-order expansion are
\begin{equation}\label{eq:lambert-critical-temperature}
 \tau_n^\star=\frac{\beta T}{W(\beta T\sqrt n)}
 \sim\frac{2\beta T}{\log n},
\end{equation}
where $W$ is the same principal real branch as above.  Equivalently, a
nonunit critical limit
$a\in(0,\infty)$ is characterized by
\begin{equation}\label{eq:exact-critical-centering}
 \frac{\beta T}{\tau_n}-\frac12\log n-\log\tau_n\longrightarrow\log a.
\end{equation}
At any such critical scale and every fixed $t\in(0,T]$,
$m_{\tau_n}^{\widehat\xi_n}(t)\to0$ in probability, although the time-zero policy has
the nondegenerate limit in the middle line of \eqref{eq:three-phase-law}.
\end{theorem}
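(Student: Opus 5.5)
The plan is to work directly from the closed form \eqref{eq:exact-m} of \cref{thm:exact-affine} at $t=0$, substituting $\xi=\widehat\xi_n=Z_n/\sqrt n$.

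The key quantity is
\[
 z_n:=z_{\tau_n}^{\widehat\xi_n}(0)=e^{\beta T/\tau_n}\sinh\!\left(\frac{Z_n}{\sqrt n\,\tau_n}\right).
\]
Since $\sqrt n\,\tau_n\to\infty$ and $Z_n=O_{\Pp}(1)$, the argument of $\sinh$ tends to zero in probability. The expansion $\sinh u=u\{1+O(u^2)\}$ then gives
\[
 z_n=a_nZ_n\{1+o_{\Pp}(1)\}.
\]
The map $z\mapsto z/\sqrt{1+z^2}$ is continuous on $[-\infty,\infty]$ and takes the values $\pm1$ at $\pm\infty$. The three cases then follow in turn.

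\emph{Case $a_n\to0$.} Here $z_n\to0$ in probability, so $m_{\tau_n}^{\widehat\xi_n}(0)\Rightarrow 0$.

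\emph{Case $a_n\to a\in(0,\infty)$.} Slutsky's lemma gives $z_n\Rightarrow aZ$, and the continuous mapping theorem yields the middle line of \eqref{eq:three-phase-law}.

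\emph{Case $a_n\to\infty$.} Since $\Pp(Z=0)=0$, for every $M$ we have
\[
 \Pp(|z_n|\le M)\le\Pp\bigl(|Z_n|\le 2M/a_n\bigr)+o(1)\to0.
\]
This follows by the portmanteau theorem, using that $\Pp(|Z|\le\epsilon)\to0$ as $\epsilon\to0$. The sign of $z_n$ equals the sign of $Z_n$, so $m_{\tau_n}^{\widehat\xi_n}(0)-\operatorname{sign}(Z_n)\to0$ in probability. Moreover $\operatorname{sign}(Z_n)\Rightarrow\operatorname{sign}(Z)$, because $0$ is a $Z$-null discontinuity point. This is the one step requiring care: the lack of an atom at zero is exactly what makes the saturated limit well defined.

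For the critical temperature, $a_n=1$ means $e^{\beta T/\tau}=\sqrt n\,\tau$. Setting $w=\beta T/\tau$ turns this into $we^{w}=\beta T\sqrt n$, so $w=W(\beta T\sqrt n)$, which is \eqref{eq:lambert-critical-temperature}. The asymptotic $W(x)\sim\log x$ gives $\tau_n^\star\sim 2\beta T/\log n$.

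The characterization \eqref{eq:exact-critical-centering} is simply $\log a_n\to\log a$ written out.

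At any such scale, $\tau_n\sim 2\beta T/\log n$ as well, because the correction term $\log\tau_n=O(\log\log n)$ is negligible; hence $\sqrt n\,\tau_n\to\infty$ is automatic.

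For fixed $t\in(0,T]$, the same expansion gives
\[
 z_{\tau_n}^{\widehat\xi_n}(t)=a_n e^{-\beta t/\tau_n}Z_n\{1+o_{\Pp}(1)\}.
\]
Since $a_n$ is bounded and $e^{-\beta t/\tau_n}\to0$, this tends to zero in probability, and therefore $m_{\tau_n}^{\widehat\xi_n}(t)\to0$.

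The only delicate points are the uniformity of the $\sinh$ linearization, which is handled because its argument is $o_{\Pp}(1)$, and the saturated case just discussed.
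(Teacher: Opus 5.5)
Your proposal is correct and follows the paper's proof step by step. You linearize $\sinh$ to get $z_{\tau_n}^{\widehat\xi_n}(0)=a_nZ_n\{1+o_{\Pp}(1)\}$, conclude by continuous mapping using $\Pp(Z=0)=0$ in the saturated case, solve $a_n=1$ through $xe^x=\beta T\sqrt n$, and send fixed $t>0$ to zero through the extra factor $e^{-\beta t/\tau_n}$. Your explicit portmanteau argument for the $a_n\to\infty$ case just spells out the paper's one-line appeal to continuous mapping.
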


\begin{proof}
Since $Z_n=O_{\Pp}(1)$ and $\sqrt n\tau_n\to\infty$,
$\widehat\xi_n/\tau_n\to0$ in probability.  Consequently,
\[
 \sinh\left(\frac{\widehat\xi_n}{\tau_n}\right)
 =\frac{Z_n}{\sqrt n\tau_n}\{1+o_{\Pp}(1)\}.
\]
Thus \eqref{eq:exact-m} gives
$z_{\tau_n}^{\widehat\xi_n}(0)=a_nZ_n\{1+o_{\Pp}(1)\}$.
The three conclusions follow from the continuous mapping theorem; in the last case use
$\Pp(Z=0)=0$.  Solving $a_n=1$ gives
$x_ne^{x_n}=\beta T\sqrt n$ with $x_n=\beta T/\tau_n$, proving
\eqref{eq:lambert-critical-temperature}.  Taking logarithms gives
\eqref{eq:exact-critical-centering}.  Finally, at a fixed $t>0$ the corresponding
amplification is $a_ne^{-\beta t/\tau_n}\to0$.
\end{proof}

The leading and two-term logarithmic approximations used below are
\begin{equation}\label{eq:critical-temperature-approximations}
 \tau_{n,1}=\frac{2\beta T}{\log n},\qquad
 L_n=\log(\beta T\sqrt n),\qquad
 \tau_{n,2}=\frac{\beta T}{L_n-\log L_n}.
\end{equation}
The leading approximation misses the critical centering.  Since
$W(x)=\log x-\log\log x+o(1)$, the log--log term determines whether the amplified noise
has a finite nonzero limit.  The transition is localized near the initial time: at the
critical scale, the initial policy remains sample dependent while each fixed later-time
policy converges to the reference mixture.

Although $1/\log n$ also occurs in simulated annealing \cite{Hajek1988}, there it is
an algorithmic-time cooling law tied to energy barriers.  Here it balances root-$n$
statistical noise against a causal equilibrium condition number and changes the policy's
statistical limit; no mixing-time claim is involved.

\subsection{Coupled bounded modes and feedback cycles}
\label{subsec:coupled-modes}

The scalar example extends to coupled modes without losing its closed linear response.
Let $d\ge1$, $\A=[-1,1]^d$, and $\mu=\mu_0^{\otimes d}$, where $\mu_0$ is a symmetric
nondegenerate probability measure with full support on $[-1,1]$.  Put
\begin{equation}\label{eq:ell-v}
 v=\int_{-1}^1 a^2\mu_0(da)>0,\qquad
 \ell(z)=\frac{\int_{-1}^1 a e^{za}\mu_0(da)}
               {\int_{-1}^1 e^{za}\mu_0(da)}.
\end{equation}
For the uniform law, $v=1/3$ and $\ell(z)=\coth z-z^{-1}$, with the value at zero
defined by continuity.  Thus this model has a genuinely continuous action space.

For $K\in\R^{d\times d}$, consider
\begin{align}
 dX_s&=m^\pi(s,X_s)\,ds+\sigma\,dW_s,
 &m^\pi(t,x)&=\int_\A a\,\pi(t,x,da),                 \label{eq:coupled-state}\\
 r(y,s,x,a)&=\sin(x-y)^\top Ka,
 &F_{\boldsymbol\varepsilon}(y,x)
   &=\boldsymbol\varepsilon^\top\sin(x-y),             \label{eq:coupled-data}
\end{align}
where sine acts componentwise, $\sigma>0$, and $W$ is $d$-dimensional.  The data are bounded and
smooth in the state variables, and the diffusion is uniformly elliptic.  In the
notation of \cref{sec:stability}, we take $\delta\equiv1$, $h\equiv0$, and
$G\equiv0$.

\begin{theorem}[Coupled bounded modes]\label{thm:coupled-modes}
Write $\nu=\sigma^2/2$.  The model
\eqref{eq:coupled-state}--\eqref{eq:coupled-data} has a unique continuous
equilibrium within the state-independent class.  It satisfies
\begin{align}
 \pi_t^{\boldsymbol\varepsilon}(da)
 &=\bigotimes_{i=1}^d
 \frac{e^{a_iZ_i(t)/\tau}\mu_0(da_i)}
      {\int_{-1}^1e^{uZ_i(t)/\tau}\mu_0(du)},\notag\\
 m_i(t)&=\ell\!\left(\frac{Z_i(t)}\tau\right),        \label{eq:coupled-gibbs}\\
 Z_i(t)&=\varepsilon_i e^{-\nu(T-t)}\cos M_i(t,T)
 +\int_t^T e^{-\nu(s-t)}\cos M_i(t,s)(Km(s))_i\,ds,\notag\\
 M_i(t,s)&=\int_t^s m_i(u)\,du.                       \label{eq:coupled-volterra}
\end{align}
At $\boldsymbol\varepsilon=0$, the susceptibility matrix is
\begin{equation}\label{eq:matrix-susceptibility}
 \chi_\tau(t):=
 D_{\boldsymbol\varepsilon}m_{\boldsymbol\varepsilon}(t)\big|_0
 =\frac v\tau
 \exp\left\{\left(-\nu I+\frac v\tau K\right)(T-t)\right\}.
\end{equation}
If $K$ is symmetric, then
\begin{equation}\label{eq:symmetric-matrix-norm}
 \norm{\chi_\tau(t)}_2
 =\frac v\tau
 \exp\left\{\left(\frac{v\lambda_{\max}(K)}\tau-\nu\right)(T-t)\right\}.
\end{equation}
If $K$ and $\boldsymbol\varepsilon$ range over compact sets and
$\mathcal I\subset(0,\bar\tau]$, these equilibria satisfy
\cref{ass:eehjb-data,ass:eehjb-envelope} with comparison constants independent of
$\tau\in\mathcal I$.  Parameter derivatives are asserted only at fixed temperature.
\end{theorem}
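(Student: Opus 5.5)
We reduce the equilibrium problem to a finite-dimensional Volterra system by working with the value ansatz
\[
 V(y;t,x)=\sum_{i=1}^d W_i(t)\sin(x_i-y_i)+\sum_{i=1}^d U_i(t)\{\cos(x_i-y_i)-1\}+c(t)
\]
and, more robustly, by computing $D_xV$ on the diagonal directly from the Feynman--Kac representation. Under a state-independent policy with mean $m(s)$, the state is $X_s=x+\int_t^s m+\sigma(W_s-W_t)$. Hence
\[
 \E\sin(X_{s,i}-y_i)=e^{-\nu(s-t)}\sin\bigl(x_i-y_i+M_i(t,s)\bigr).
\]
Differentiating in the flow state $x_i$ and setting $y=x$ gives
\[
 \partial_{x_i}\E\sin(X_{s,i}-y_i)\big|_{y=x}=e^{-\nu(s-t)}\cos M_i(t,s).
\]
The reward contributes $\sin(X_s-y)^\top Km(s)$ after integrating out the action, because the action is independent of the state given the policy. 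The entropy term is state-independent and drops out of $D_x$. Summing the terminal and running contributions yields exactly the formula \eqref{eq:coupled-volterra} for $Z=D_xV(t,t,x,x)$.

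The acting self's score is $a\cdot Z(t)+\sin(0)^\top Ka=a\cdot Z(t)$. This is linear in $a$, so the Gibbs map factorizes over the product reference $\mu_0^{\otimes d}$, giving \eqref{eq:coupled-gibbs} with $m_i=\ell(Z_i/\tau)$. Since $Z$ does not depend on $x$, the policy is indeed state independent, which confirms consistency of the class. This also verifies the fixed point through \cref{prop:spike-verification}, or equivalently through the mild formulation of \cref{sec:stability}.

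For existence and uniqueness, we view \eqref{eq:coupled-gibbs}--\eqref{eq:coupled-volterra} as a backward fixed-point problem for $m\in C([0,T];[-1,1]^d)$. The function $\ell$ is smooth with $|\ell'|\le v$ (it is a variance bounded by the second moment), so $m\mapsto\ell(Z[m]/\tau)$ is Lipschitz. The right-hand side of \eqref{eq:coupled-volterra} depends on $m$ only through its values on $[t,T]$, with Lipschitz constant bounded by $C(1+\|K\|)(T-t)$ in the tail norm, because $\cos M_i$ is $1$-Lipschitz in $M_i$ and $|m|\le1$. The Volterra iteration of \cref{thm:nonlinear-stability} (order $\alpha=1$), or a Picard iteration combined with backward continuation, then gives a unique continuous solution for each $\tau>0$.

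For the susceptibility, differentiate at $\boldsymbol\varepsilon=0$. There $Z\equiv0$ solves the system, because $m=\ell(0)=0$ by symmetry, so $M\equiv0$ and $\cos M=1$. The derivative of $\cos M$ is $-\sin M\cdot\dot M=0$ at this point, so the linearization is
\[
 \dot Z(t)=e^{-\nu(T-t)}\dot{\boldsymbol\varepsilon}+\int_t^Te^{-\nu(s-t)}K\,\dot m(s)\,ds,
 \qquad \dot m=\frac{v}{\tau}\dot Z,
\]
using $\ell'(0)=v$. Writing $\chi=D m$, this becomes the linear ODE
\[
 \chi'(t)=\Bigl(\nu I-\frac v\tau K\Bigr)\chi(t),\qquad \chi(T)=\frac v\tau I,
\]
whose solution is \eqref{eq:matrix-susceptibility}. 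Differentiability of the solution map itself follows from the implicit function theorem for the Volterra fixed point, whose linearization is invertible by the same causal Neumann series as in \cref{thm:fractional-resolvent}. For symmetric $K$, the matrix exponential is symmetric with largest eigenvalue $e^{(v\lambda_{\max}/\tau-\nu)(T-t)}$, which gives \eqref{eq:symmetric-matrix-norm}.

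The main obstacle is the final claim that \cref{ass:eehjb-data,ass:eehjb-envelope} hold uniformly in $\tau\in\mathcal I$. The data clauses are direct: the coefficients are trigonometric and bounded with all derivatives, $a=\sigma^2I$ is constant with a heat semigroup satisfying \eqref{eq:base-semigroup}--\eqref{eq:base-holder}, $G\equiv0$, and $h\equiv0$. For the envelope, we bound $\|V\|_{\mathfrak V}$ without reference to $\tau$ as follows.
\begin{itemize}
\item $V^1$ is an expectation of terms bounded by $|\boldsymbol\varepsilon|+T\|K\|\sqrt d$ plus the entropy contribution.
\item The entropy term $\tau\KL$ is bounded by $\sup_z\{z\ell(z)-\log\int e^{za}\,d\mu_0\}\cdot\tau$ evaluated at $z=Z/\tau$. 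Since $|Z|\le C$, this is at most $C'$ uniformly, because $\tau\KL\le |Z|\cdot2$ via the Gibbs variational identity.
\item $D_xV^1$ equals a Gaussian expectation of cosines, hence is bounded by the same constant.
\item Uniform continuity follows from the explicit formulas.
\end{itemize}
I expect the care needed here to be in checking that the $\tau\KL$ term stays bounded as $\tau\downarrow0$; the variational bound $\tau\KL(\mathcal G_\tau(q)\Vert\mu)\le\osc q$ settles it.
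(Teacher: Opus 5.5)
Your proof is correct. Four steps coincide with the paper's: the Feynman--Kac computation of $Z$, the Gibbs factorization, the linearization at $\boldsymbol\varepsilon=0$, and the envelope bound via $\tau\KL(\pi_t\Vert\mu)\le\osc_a(a^\top Z(t))$ and $|Z_i|\le|\varepsilon_i|+T\|K\|$.

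The real difference is how you prove existence and uniqueness. The paper complexifies the score, setting $\zeta_i=\varepsilon_ie^{-\nu(T-t)+iM_i(t,T)}+\int_t^Te^{-\nu(s-t)+iM_i(t,s)}(Km(s))_i\,ds$ with $Z_i=\operatorname{Re}\zeta_i$. In reverse time this closes into the $2d$-dimensional ODE \eqref{eq:coupled-score-ode}, whose vector field is locally Lipschitz with linear growth. Global well-posedness is then standard, and smooth dependence on $\boldsymbol\varepsilon$ is simply smooth dependence on initial data.

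You instead run a Picard/backward-continuation contraction directly on the mean path $m$. This uses the bounded-kernel Volterra Lipschitz estimate for $m\mapsto Z[m]$. You then obtain differentiability separately, from the implicit function theorem with a Neumann-series inverse. Your route is more elementary and uses only the causal structure of \cref{sec:volterra}, not the special exponential form of the kernel. The paper's route yields a finite-dimensional state, which it reuses in \cref{cor:nonlinear-graph} (invariance of the positive orthant for \eqref{eq:coupled-score-ode}), and it gives parameter regularity for free.

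One small slip: $|\ell'|\le v$ is not true for a general symmetric $\mu_0$, although it does hold for the uniform law. $\ell'(z)$ is the variance of $a$ under the tilted law $e^{za}\mu_0(da)$, and tilting can push mass toward $\pm1$. Then this variance exceeds the untilted second moment $v$; for example, take $\mu_0$ concentrated near $0$ with a small mass spread over $[-1,1]$. The correct universal bound is $|\ell'|\le1$, as used in the proof of \cref{cor:nonlinear-graph}. Any finite bound suffices for your Lipschitz step, so nothing breaks.

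Similarly, the Lipschitz constant of $m\mapsto Z[m]$ picks up a factor $|\boldsymbol\varepsilon|$ from the terminal cosine term. Your constant $C$ should absorb it.
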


\begin{proof}
For deterministic $m$, Gaussian convolution gives, coordinatewise,
\[
 \E_{t,x}\sin(X_s^i-y_i)
 =e^{-\nu(s-t)}\sin\{x_i-y_i+M_i(t,s)\}.
\]
Differentiating policy evaluation in the flow state and setting $y=x$ gives
\eqref{eq:coupled-volterra}.  The acting score
is $a^\top Z(t)$.  Since the reference measure is a product, the Gibbs law factorizes
and its mean is \eqref{eq:coupled-gibbs}.

A complex score turns the Volterra equation into a finite-dimensional ODE.  Define
\[
 \zeta_i(t)=\varepsilon_i e^{-\nu(T-t)+iM_i(t,T)}
 +\int_t^T e^{-\nu(s-t)+iM_i(t,s)}(Km(s))_i\,ds.
\]
Then $Z_i=\operatorname{Re}\zeta_i$.  In reverse time, writing
$\zeta_i(T-u)=x_i(u)+iy_i(u)$ turns the Volterra equation into
\begin{equation}\label{eq:coupled-score-ode}
\begin{aligned}
 \dot x=-\nu x-m\odot y+Km,
 &\qquad \dot y=-\nu y+m\odot x,\\
 m_i=\ell(x_i/\tau),
 &\qquad (x(0),y(0))=(\boldsymbol\varepsilon,0).
\end{aligned}
\end{equation}
The vector field is locally Lipschitz and has linear growth because $|m_i|\le1$.
Thus \eqref{eq:coupled-score-ode}, and equivalently the Volterra system, has one global
solution.  This proves existence and uniqueness in the state-independent class.

Symmetry of $\mu_0$ gives $\ell(0)=0$ and $\ell'(0)=v$.  Differentiation at the tied
model yields
\[
 \chi_\tau(t)=\frac v\tau e^{-\nu(T-t)}I
 +\frac v\tau\int_t^T e^{-\nu(s-t)}K\chi_\tau(s)\,ds.
\]
In reverse time this is the mild equation with generator
$-\nu I+(v/\tau)K$, proving \eqref{eq:matrix-susceptibility}.  Diagonalizing a
symmetric $K$ gives \eqref{eq:symmetric-matrix-norm}.

For the final assertion, the uncontrolled evolution is the heat semigroup, the
trigonometric primitives are bounded and smooth, and the induced drift is spatially
constant with $|m_i|\le1$.  Gaussian convolution gives an explicit bounded value,
while $\tau\KL(\pi_t\Vert\mu)\le\osc_a(a^\top Z(t))$ and
$|Z(t)|\le C(|\boldsymbol\varepsilon|+T\|K\|)$.  These bounds verify
\cref{ass:eehjb-data,ass:eehjb-envelope}; the complete value formula and derivative
check are recorded in the supplement.
\end{proof}

\begin{corollary}[Acyclic and cyclic feedback]\label{cor:feedback-graph}
Assume $K\ge0$ and fix $t<T$.  Give $K$ the directed graph with an edge $j\to i$ when
$K_{ij}>0$, and put $h=T-t$.
\begin{enumerate}[label=\textup{(\roman*)},leftmargin=2em]
\item If the graph is acyclic with longest directed path of length $L$, then
\begin{equation}\label{eq:dag-susceptibility}
 \chi_\tau(t)=\frac v\tau e^{-\nu h}
 \sum_{r=0}^L\frac{(vh)^r}{r!\tau^r}K^r,
 \qquad \norm{\chi_\tau(t)}_2=\Theta(\tau^{-(L+1)}).
\end{equation}
If $K^Lc\ne0$, the response to $n^{-1/2}c$ has boundary
\begin{equation}\label{eq:dag-statistical-scale}
 \tau_n\asymp n^{-1/\{2(L+1)\}}.
\end{equation}
\item If the graph contains a directed cycle, then $\rho(K)>0$ and
\begin{equation}\label{eq:cycle-susceptibility}
 \rho(\chi_\tau(t))
 =\frac v\tau\exp\left\{-\nu h+\frac{v\rho(K)h}\tau\right\}.
\end{equation}
Every induced matrix norm is bounded below by the right-hand side.  For a Perron
vector, the response to $n^{-1/2}r$ has unit amplification at
\begin{equation}\label{eq:cycle-critical-temperature}
 \tau_{n,\rm cyc}^\star
 =\frac{v\rho(K)h}{W\!\left(\rho(K)h e^{\nu h}\sqrt n\right)}
 \sim\frac{2v\rho(K)h}{\log n}.
\end{equation}
\end{enumerate}
For the full policy derivative,
\begin{equation}\label{eq:policy-matrix-lower}
 \left\|D_{\boldsymbol\varepsilon}\pi_t\big|_0
 \right\|_{\ell^\infty\to{\rm TV}}
 \ge \|\chi_\tau(t)\|_{\infty\to\infty}
 \ge \rho(\chi_\tau(t)).
\end{equation}
The matching upper bound differs by at most $d/v$; hence the policy derivative has
the same polynomial order in the acyclic case and exponential rate in the cyclic
case.  The norm comparison is proved in the supplement.
\end{corollary}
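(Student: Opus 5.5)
The whole argument starts from the closed form \eqref{eq:matrix-susceptibility} of \cref{thm:coupled-modes}. Since $-\nu I$ commutes with $K$, we have $\chi_\tau(t)=\frac v\tau e^{-\nu h}\exp\{(vh/\tau)K\}$. Everything below then comes from the matrix exponential of a nonnegative $K$, together with elementary Lambert-$W$ algebra.

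For part (i), acyclicity means we can order the vertices topologically. In that order $K$ is strictly triangular. Entries of $K^r$ count weighted directed walks of length $r$, so $K^{L+1}=0$, while $K^L\ne0$ because a path of length $L$ exists and all weights are positive. The exponential series therefore truncates, giving \eqref{eq:dag-susceptibility}.

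For the norm, every term $K^r$ is entrywise nonnegative. The top term dominates as $\tau\downarrow0$, so
\[
\tau^{L+1}\norm{\chi_\tau(t)}_2\to v e^{-\nu h}(vh)^L\norm{K^L}_2/L!>0,
\]
which gives the $\Theta(\tau^{-(L+1)})$ order. This is the concrete case of \cref{thm:block-feedback-dichotomy}(i).

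For the statistical boundary, take $K^Lc\ne0$. Then $\chi_\tau(t)n^{-1/2}c=n^{-1/2}\tau^{-(L+1)}\{C_Lc'+o(1)\}$ with a nonzero vector $c'$. It tends to $0$ or $\infty$ according as $n^{1/2}\tau_n^{L+1}\to\infty$ or $0$. This yields \eqref{eq:dag-statistical-scale}.

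For part (ii), a directed cycle with positive weights gives $\operatorname{tr}K^q>0$ for its length $q$, hence $\rho(K)>0$. By Perron--Frobenius for nonnegative matrices, $\rho(K)$ is an eigenvalue with a nonnegative eigenvector $r$. The eigenvalues of $\exp(sK)$ are $e^{s\lambda}$, and $|e^{s\lambda}|=e^{s\operatorname{Re}\lambda}\le e^{s\rho(K)}$. So $\rho(\exp(sK))=e^{s\rho(K)}$, which gives \eqref{eq:cycle-susceptibility}. Every induced norm dominates the spectral radius.

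Along $r$ the response is exactly $\chi_\tau(t)r=\frac v\tau e^{-\nu h+v\rho(K)h/\tau}r$. Unit amplification of $n^{-1/2}r$ means
\[
\frac{v}{\tau}e^{v\rho(K)h/\tau}=e^{\nu h}\sqrt n .
\]
Put $x=v\rho(K)h/\tau$. The equation becomes $xe^x=\rho(K)he^{\nu h}\sqrt n$, so $x=W(\rho(K)he^{\nu h}\sqrt n)$. The expansion $W(y)\sim\log y\sim\frac12\log n$ gives \eqref{eq:cycle-critical-temperature}.

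For \eqref{eq:policy-matrix-lower}, differentiate the product Gibbs law at the tied model. By \cref{lem:gibbs-differential}, $D_{\boldsymbol\varepsilon}\pi_t[e]=\pi_t\,\tau^{-1}a^\top\dot Z[e]$, with $\pi_t=\mu$ at $\boldsymbol\varepsilon=0$. Its mean is $\chi_\tau(t)e$, since $\dot m_i=(v/\tau)\dot Z_i$.

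\emph{Lower bound.} The action coordinates lie in $[-1,1]$, so $|\int a_i\,d\eta|\le\norm\eta_{\rm TV}$. Hence the TV norm of the signed kernel is at least $\norm{\chi_\tau e}_\infty$, giving the first inequality. The second is the spectral-radius bound for the induced $\infty$-norm.

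\emph{Upper bound.} We have $\norm{D\pi[e]}_{\rm TV}=\tau^{-1}\int|a^\top\dot Z|\,d\mu\le\tau^{-1}\sum_i|\dot Z_i|=v^{-1}\norm{\chi_\tau e}_1\le(d/v)\norm{\chi_\tau e}_\infty$. This is the matching bound up to $d/v$, so the policy derivative inherits the polynomial and exponential rates.

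The main obstacle is the DAG norm order: we need the leading coefficient $K^L$ not to cancel against lower terms. Nonnegativity of every term handles this directly. The spectral identity in (ii) is the only other point needing care; the eigenvalue-modulus argument above settles it without assuming irreducibility.
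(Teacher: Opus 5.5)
Your proposal is correct and follows the paper's route: nilpotency truncates the exponential in the DAG case, and spectral mapping gives \eqref{eq:cycle-susceptibility}, which you make precise via Perron--Frobenius with $s=vh/\tau>0$. Along the Perron vector you solve the same Lambert-$W$ equation $xe^x=\rho(K)he^{\nu h}\sqrt n$, and for the two-sided policy-norm comparison you use the same tied-law density $d\dot\pi[c]/d\mu=v^{-1}a^\top\chi_\tau(t)c$ that the supplement uses. One small correction: nonnegativity is what guarantees $K^L\ne0$, since walks of length $L$ cannot cancel, whereas the dominance of the $\tau^{-(L+1)}$ term in $\norm{\chi_\tau(t)}_2$ needs no sign condition, because each power of $1/\tau$ carries a fixed matrix coefficient.
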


\begin{proof}
An acyclic nonnegative matrix is nilpotent after a simultaneous permutation of rows
and columns.  Path expansion gives $K^{L+1}=0$ and $K^L\ne0$, so
\eqref{eq:dag-susceptibility} follows by expanding the exponential.  A positive
directed cycle implies $\rho(K)>0$.  Spectral mapping applied to
\eqref{eq:matrix-susceptibility} gives \eqref{eq:cycle-susceptibility}.  Scaling the
leading DAG term by $n^{-1/2}$ proves \eqref{eq:dag-statistical-scale}.  Along a
Perron vector, equating the multiplier in \eqref{eq:cycle-susceptibility} to
$\sqrt n$ gives $xe^x=\rho(K)he^{\nu h}\sqrt n$ with
$x=v\rho(K)h/\tau$, which proves \eqref{eq:cycle-critical-temperature}.
\end{proof}

The graph also controls the nonlinear response.

\begin{corollary}[Nonlinear acyclic decay and cooperative selection]
\label{cor:nonlinear-graph}
Assume $K\ge0$.
\begin{enumerate}[label=\textup{(\roman*)},leftmargin=2em]
\item If the graph is acyclic with longest path $L$, then
\begin{equation}\label{eq:dag-nonlinear-bound}
 |m_{\boldsymbol\varepsilon}(t)|
 \le\frac1\tau\sum_{r=0}^L
 \frac{(T-t)^r}{r!\tau^r}K^r|\boldsymbol\varepsilon|.
\end{equation}
In particular, $\|\boldsymbol\varepsilon_\tau\|=O(e^{-\gamma/\tau})$ with
$\gamma>0$ implies $m_{\boldsymbol\varepsilon_\tau}(t)\to0$.
\item Suppose
$\kappa_*:=\min_i(K\mathbf1)_i>0$,
$\sup\operatorname{supp}\mu_0=1$, and $T<\pi/2$.  Set
$a_\sigma=e^{-\nu T}\cos T$.  For
$\boldsymbol\varepsilon_\tau=\tau e^{-\gamma/\tau}\mathbf1$ and fixed $t<T$, if
\begin{equation}\label{eq:collective-selection-condition}
 0<\gamma<a_\sigma v\kappa_*(T-t),
\end{equation}
then $m_{\boldsymbol\varepsilon_\tau,i}(t)\to1$ for every $i$.
\end{enumerate}
\end{corollary}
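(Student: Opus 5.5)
Both parts rest on the exact Volterra system \eqref{eq:coupled-gibbs}--\eqref{eq:coupled-volterra} of \cref{thm:coupled-modes}, together with two elementary properties of the mean map $\ell$. Since $\mu_0$ is symmetric, $\ell$ is odd. Moreover $\ell'(z)=\operatorname{Var}_z(a)\le\E_z a^2\le1$, while $\ell'(0)=v$. The plan also uses the global bound $|\ell(z)|\le|z|$.

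\textbf{Part (i).} Drop the cosine factors using $|\cos|\le1$, and drop the damping using $e^{-\nu(\cdot)}\le1$. Together with $|m_i|=|\ell(Z_i/\tau)|\le|Z_i|/\tau$, this gives the componentwise inequality
\[
 |m(t)|\le\frac1\tau|\boldsymbol\varepsilon|+\frac1\tau\int_t^TK|m(s)|\,ds .
\]
Since $K\ge0$, this is a positive linear Volterra inequality. Iterate it $L+1$ times. Each iterate gives a term $\tau^{-(r+1)}K^r(T-t)^r/r!\,|\boldsymbol\varepsilon|$, and $K^{L+1}=0$ kills the remainder. The sum is exactly \eqref{eq:dag-nonlinear-bound}. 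For the consequence, the bound is polynomial in $1/\tau$ times $\|\boldsymbol\varepsilon_\tau\|=O(e^{-\gamma/\tau})$, so it tends to zero.

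\textbf{Part (ii).} Here I would prove a lower bound on $Z$ by a comparison or bootstrap argument.

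First, since $|M_i(t,s)|\le s-t\le T<\pi/2$, every cosine satisfies $\cos M_i\ge\cos T>0$. Hence the kernels in \eqref{eq:coupled-volterra} are bounded below by $a_\sigma$, at least when $m\ge0$ and the forcing term is also positive. Positivity of $Z$ is preserved backward in time: $\varepsilon_i>0$, $K\ge0$, and $\ell$ is increasing with $\ell(0)=0$. A continuity argument on the first time where some $Z_i$ vanishes should give $Z_i>0$, hence $m_i>0$.

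Next, use $\ell(z)\ge vz$ for small $z$. Better, use a convexity-free bound: $\ell(z)\ge\min\{vz/2,\,c\}$ for some $c>0$, which follows from $\ell'(0)=v$ and monotonicity. Working with the scalar $\underline Z(t)=\min_iZ_i(t)$, this yields
\[
 \underline Z(t)\ge a_\sigma\varepsilon+a_\sigma\kappa_*\int_t^T\min\Bigl\{\frac{v\,\underline Z(s)}{\tau},c\Bigr\}\,ds .
\]
As long as the minimum is the linear branch, Gronwall gives roughly $\underline Z(t)\gtrsim a_\sigma\tau e^{-\gamma/\tau}e^{a_\sigma v\kappa_*(T-s)/\tau}$. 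Under \eqref{eq:collective-selection-condition} the exponent $a_\sigma v\kappa_*(T-t)-\gamma$ is positive. So before time $t$ (at some $s_\tau\downarrow$ strictly between $t$ and $t+\delta$, for small $\delta$ reflected in the strict inequality) $\underline Z$ reaches order $\tau$. After that the saturated branch contributes at least $a_\sigma\kappa_*c\,\delta'$ over a fixed interval, so $Z_i(t)\ge c'>0$ uniformly in $\tau$. Then $Z_i(t)/\tau\to\infty$. Because $\sup\operatorname{supp}\mu_0=1$, the Gibbs tilt concentrates near $1$, so $\ell(z)\to1$ and $m_i(t)\to1$.

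\textbf{Main obstacle.} The hardest step is the bookkeeping in the bootstrap. One must handle the transition from the linear to the saturated regime with explicit $\delta$-margins, so that both the exponential growth and the fixed-length saturation window fit inside $(t,T)$. One must also justify the positivity of $Z$ and the cosine lower bound simultaneously; the constraint $T<\pi/2$ is what makes the latter possible.
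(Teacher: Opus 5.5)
Your part (i) is the paper's argument. The skeleton of part (ii) is also sound:
\begin{itemize}
\item the first-vanishing-time argument for $Z>0$ works, because $|M_i(t,s)|\le s-t<\pi/2$ holds for either sign of $m$;
\item once $m\ge0$, the bound $(Km)_i\ge\kappa_*\min_jm_j=\kappa_*\ell(\underline Z/\tau)$ gives $\underline Z(t)\ge a_\sigma\tau e^{-\gamma/\tau}+a_\sigma\kappa_*\int_t^T\ell(\underline Z(s)/\tau)\,ds$.
\end{itemize}

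The gap is the slope constant in your linearization. It matters because the threshold in \eqref{eq:collective-selection-condition} is sharp in $v$.

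The minorant you actually justify is $\ell(z)\ge\min\{vz/2,c\}$. It gives growth at rate $a_\sigma v\kappa_*/(2\tau)$, so the linear phase lasts reverse time $2\gamma/(a_\sigma v\kappa_*)+O(\tau)$. Your argument therefore proves selection only for $\gamma<a_\sigma v\kappa_*(T-t)/2$.

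Your displayed inequality and the Gronwall rate $e^{a_\sigma v\kappa_*(T-s)/\tau}$ instead use slope $v$, i.e.\ they assume $\ell(z)\ge\min\{vz,c\}$. That is false in general. For the uniform law, $\ell(z)=\coth z-z^{-1}=z/3-z^3/45+\cdots<vz$ for small $z>0$. The same expansion rules out your first suggestion $\ell(z)\ge vz$.

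The repair uses the strictness of the hypothesis.
\begin{itemize}
\item Pick $\eta\in(0,1)$ with $\gamma<(1-\eta)a_\sigma v\kappa_*(T-t)$.
\item Pick $\delta_\eta>0$ with $\ell(z)\ge(1-\eta)vz$ on $[0,\delta_\eta]$; this is possible since $\ell(z)/z\to v$. Put $c_\eta=\ell(\delta_\eta)$.
\item The map $z\mapsto\min\{(1-\eta)vz/\tau,c_\eta\}$ is nondecreasing and Lipschitz. Gronwall applied to the negative part of $\underline Z-w$ therefore gives $\underline Z\ge w$, where $w'=-a_\sigma\kappa_*\min\{(1-\eta)vw/\tau,c_\eta\}$ and $w(T)=a_\sigma\tau e^{-\gamma/\tau}$.
\item The linear phase of $w$ ends at a forward time $s_\tau\to T-\gamma/\{(1-\eta)a_\sigma v\kappa_*\}$. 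This stays a fixed distance above $t$, not within a small $\delta$ of $t$ as you wrote.
\item The saturated phase then gives $w(t)\ge a_\sigma\kappa_*c_\eta(s_\tau-t)\ge c'>0$, so $Z_i(t)/\tau\to\infty$.
\end{itemize}

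The paper avoids this margin bookkeeping. It compares, via Picard iterates, with the exact scalar equation $q'=(a_\sigma\kappa_*/\tau)\ell(q)$, $q(0)=a_\sigma e^{-\gamma/\tau}$. It then computes the hitting time of a fixed small level $\delta$ by separation of variables: $\frac{\tau}{a_\sigma\kappa_*}\int_{a_\sigma e^{-\gamma/\tau}}^{\delta}dq/\ell(q)=\gamma/(a_\sigma v\kappa_*)+o(1)$. This uses only $\ell(q)/q\to v$ and loses no constant.
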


\begin{proof}
Since $\ell'(z)$ is the variance under an exponential tilt of $\mu_0$,
$|\ell'(z)|\le1$.  Equations
\eqref{eq:coupled-gibbs}--\eqref{eq:coupled-volterra} give
\[
 |m(t)|\le\tau^{-1}|\boldsymbol\varepsilon|
 +\tau^{-1}\int_t^T K|m(s)|\,ds.
\]
Positive iteration stops after $K^L$ and proves \eqref{eq:dag-nonlinear-bound}.

For the second claim, the positive orthant is invariant in
\eqref{eq:coupled-score-ode}.
On a face $x_i=0$, one has $m_i=0$ and $\dot x_i=(Km)_i\ge0$.  Thus $x,m\ge0$.
On a face $y_i=0$, one has $\dot y_i=m_ix_i\ge0$, so $y\ge0$ as well.
Since $0\le M_i(t,s)\le s-t<T<\pi/2$, the score satisfies, componentwise,
\begin{equation}\label{eq:coupled-score-lower}
 Z_i(t)\ge a_\sigma\varepsilon_i
 +a_\sigma\int_t^T(Km(s))_i\,ds.
\end{equation}
Let $\mathcal P$ be the positive Volterra map obtained by applying
$\ell(\cdot/\tau)$ to the right-hand side of
\eqref{eq:coupled-score-lower}.  The exact solution satisfies
$m\ge\mathcal Pm$.  Since $m\ge0$ and $\mathcal P$ is positive and monotone,
$0\le\mathcal P0\le\mathcal Pm\le m$; induction gives
$m\ge\mathcal P^k0$ for every $k$.
The row-sum bound $K\mathbf1\ge\kappa_*\mathbf1$ shows inductively that these
vector iterates dominate the scalar Picard iterates whose limit is
$\underline m$.  Hence $m_i(t)\ge\underline m(t)$, where
\[
 \underline w(t)=a_\sigma\tau e^{-\gamma/\tau}
 +a_\sigma\kappa_*\int_t^T\ell(\underline w(s)/\tau)\,ds,
 \qquad \underline m(t)=\ell(\underline w(t)/\tau).
\]
For $q_\tau(u)=\underline w(T-u)/\tau$,
\[
 q_\tau'(u)=\frac{a_\sigma\kappa_*}{\tau}\ell(q_\tau(u)),
 \qquad q_\tau(0)=a_\sigma e^{-\gamma/\tau}.
\]
Since $\ell(q)/q\to v$ at zero, the time needed to reach any fixed small
$\delta>0$ is
\[
 \frac{\tau}{a_\sigma\kappa_*}
 \int_{a_\sigma e^{-\gamma/\tau}}^\delta\frac{dq}{\ell(q)}
 =\frac{\gamma}{a_\sigma v\kappa_*}+o(1).
\]
Condition \eqref{eq:collective-selection-condition} leaves positive time after this
hitting point.  On that interval $\ell(q)\ge\ell(\delta)>0$, so
$q_\tau(T-t)\to\infty$.  The support assumption gives $\ell(q)\to1$.
\end{proof}

The restriction $T<\pi/2$ is used only in part~\textup{(ii)}: it keeps the cosine
kernel positive.  On a longer horizon the return can change sign, so the monotone
comparison above gives no selection conclusion.

If $K\ge0$ is irreducible, let $r\gg0$ be its Perron vector and equip $\R^d$ with the
weighted norm $\|x\|_r=\max_i|x_i|/r_i$.  Then $\|K\|_r=\rho(K)$, and the tangent
operator in \eqref{eq:matrix-susceptibility} satisfies
\eqref{eq:mode-subeigen} with $\alpha=1$, $\psi(t)=e^{\nu t}r$, and
$\kappa_-=\kappa=v\rho(K)$.  Thus the abstract upper and lower exponential rates are
both attained by a bounded uniformly elliptic model in every state dimension.  A
reducible matrix gives the same conclusion after restriction to an irreducible Perron
class.

\section{Time discretization}\label{sec:discretization}

A finite time grid has a strictly triangular influence matrix and hence a polynomial
condition number at fixed $N$.  The continuous-time operator can have infinitely many
nonzero causal powers and exponential conditioning, so refinement need not commute with
$\tau\downarrow0$.  We quantify this mismatch by applying a uniform right-endpoint rule
to the coupled tangent equation after factoring out the Brownian damping.

\begin{proposition}[Graph-dependent mesh stiffness]
\label{prop:graph-mesh}
For the coupled model with $K\ge0$, set
$Y_i=e^{\nu(T-t_i)}\chi_{\tau,N}(t_i)$ and use the right-endpoint rule
\[
 Y_i=\frac v\tau I+\frac{v\Delta}{\tau}
       \sum_{j=i}^{N-1}K Y_{j+1},
 \qquad Y_N=\frac v\tau I.
\]
The resulting $N$-step time-zero susceptibility is
\begin{equation}\label{eq:matrix-discrete-susceptibility}
 \chi_{\tau,N}(0)=\frac v\tau e^{-\nu T}
\left(I+\frac{vT}{N\tau}K\right)^N.
\end{equation}
For any fixed induced matrix norm, write
\begin{equation}\label{eq:matrix-relative-mesh-error}
 \mathfrak e_{\tau,N}
 :=\frac{\|\chi_{\tau,N}(0)-\chi_\tau(0)\|}
          {\|\chi_\tau(0)\|}.
\end{equation}
If the graph is acyclic with longest path $L$, then
\begin{equation}\label{eq:dag-discrete-susceptibility}
 \chi_{\tau,N}(0)=\frac v\tau e^{-\nu T}
 \sum_{r=0}^L\binom Nr\left(\frac{vT}{N\tau}\right)^rK^r.
\end{equation}
For fixed $N\ge L$, the ratio of the leading low-temperature coefficient to its
continuous counterpart is
\begin{equation}\label{eq:dag-leading-grid-ratio}
 q_{N,L}=\frac{L!\binom NL}{N^L}
 =\prod_{j=0}^{L-1}\left(1-\frac jN\right).
\end{equation}
When $L\le1$, the discrete and continuous susceptibilities agree for every
$N\ge1$.  When $L\ge2$ and $N\ge L$, one has
$\mathfrak e_{\tau,N}\to0$ jointly with $\tau\downarrow0$ if and only if
$N\to\infty$; no coupling between $N$ and $\tau$ is needed.

If the graph contains a cycle and $r$ is a Perron vector, then along $r$ the ratio of
the discrete to continuous response is
\begin{equation}\label{eq:cycle-grid-ratio}
 \frac{\|\chi_{\tau,N}(0)r\|}{\|\chi_\tau(0)r\|}
 =\left(1+\frac{x}{N}\right)^Ne^{-x},
 \qquad x=\frac{v\rho(K)T}{\tau}.
\end{equation}
Along any sequence $\tau\downarrow0$ and $N\to\infty$, this ratio tends to one if and
only if $N\tau^2\to\infty$.
\end{proposition}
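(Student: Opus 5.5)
The argument is a direct linear-algebra computation on the right-endpoint recursion, followed by three separate asymptotic comparisons: DAG with $L\le1$, DAG with $L\ge2$, and a Perron vector of a cyclic graph.

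\textbf{Closed form.} Subtracting consecutive equations of the recursion gives $Y_{i}-Y_{i+1}=\frac{v\Delta}{\tau}KY_{i+1}$, that is, $Y_i=(I+\frac{v\Delta}{\tau}K)Y_{i+1}$. With $Y_N=\frac v\tau I$ this yields $Y_0=\frac v\tau(I+\frac{vT}{N\tau}K)^N$, since $\Delta=T/N$. Multiplying by $e^{-\nu T}$ proves \eqref{eq:matrix-discrete-susceptibility}. The continuous counterpart is \eqref{eq:matrix-susceptibility} at $t=0$.

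\textbf{Acyclic case.} Here $K^{L+1}=0$ and $K^L\ne0$, as in \cref{cor:feedback-graph}, so the binomial expansion truncates at $r=L$. This gives \eqref{eq:dag-discrete-susceptibility}. Comparing with \eqref{eq:dag-susceptibility} at $h=T$, the $r$th coefficients differ by the factor $r!\binom Nr/N^r=q_{N,r}$. Since $q_{N,0}=q_{N,1}=1$, the two expressions coincide when $L\le1$.

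For $L\ge2$, multiply numerator and denominator of $\mathfrak e_{\tau,N}$ by $\tau^{L+1}e^{\nu T}/v$. As $\tau\downarrow0$, the denominator tends to $(vT)^L\|K^L\|/L!>0$. The numerator equals $\sum_{r\ge2}(1-q_{N,r})\frac{(vT)^r}{r!}\tau^{L-r}K^r$ in norm, and this is dominated by the $r=L$ term. Hence
\[
 \mathfrak e_{\tau,N}=(1-q_{N,L})+O(\tau)
\]
uniformly in $N$, because $0\le 1-q_{N,r}\le1$. Therefore $\mathfrak e\to0$ iff $q_{N,L}\to1$. For $L\ge2$, $q_{N,L}\le1-1/N$, so $q_{N,L}\to1$ iff $N\to\infty$. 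The $O(\tau)$ term needs uniformity in $N$ and care if $\tau$ does not go to zero; I will state the limit jointly with $\tau\downarrow0$ as claimed.

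\textbf{Cyclic case.} Let $\rho=\rho(K)$ and $Kr=\rho r$. Both responses applied to $r$ are scalar multiples of $r$: $(1+x/N)^N$ and $e^{x}$ respectively, each times the common factor $\frac v\tau e^{-\nu T}$. This gives \eqref{eq:cycle-grid-ratio} with $x=v\rho T/\tau$. Write
\[
 \log\text{ratio}=N\log(1+x/N)-x.
\]
The main obstacle is making this estimate uniform over all joint sequences, including those where $x/N$ does not tend to zero. I plan two regimes.

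\emph{Regime $x/N\to0$.} Here $N\log(1+x/N)-x=-\frac{x^2}{2N}(1+o(1))$, using $\log(1+u)=u-\frac{u^2}{2}(1+O(u))$. So the ratio tends to one iff $x^2/N\to0$, i.e.\ iff $N\tau^2\to\infty$.

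\emph{Other subsequences.} Use $\log(1+u)-u\le -c\min(u^2,u)$ for $u>0$. If $x/N\ge\delta$ along a subsequence, then $-\log\text{ratio}\ge c'N\to\infty$, so the ratio tends to $0$. On such a subsequence $N\tau^2\le C/(\delta^2N)\to0$, so $N\tau^2\not\to\infty$, consistent with the claimed equivalence.

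To combine the regimes, note that $N\tau^2\to\infty$ implies $x/N\to0$, and then the first regime applies. Conversely, if $N\tau^2\not\to\infty$, pick a subsequence with $N\tau^2$ bounded. Either $x/N\not\to0$, handled by the second regime, or $x/N\to0$ with $x^2/N$ bounded below, so the first regime shows the ratio stays bounded away from one.
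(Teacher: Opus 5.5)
Your proposal is correct and follows the paper's proof. It uses backward substitution for the closed form and coefficientwise comparison of the truncated binomial and exponential sums in the acyclic case; your uniform expansion $\mathfrak e_{\tau,N}=(1-q_{N,L})+O(\tau)$ is a tidy packaging of the paper's subsequence argument. It then analyzes the scalar Perron-mode ratio through $N\{\log(1+x/N)-x/N\}$, using $\rho(K)>0$ from the cycle, and the paper avoids your regime split only by invoking the single two-sided bound $u-u^2/2\le\log(1+u)\le u-u^2/[2(1+u)]$, whose upper half already behaves like your $\min(u^2,u)$ estimate.
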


\begin{proof}
Backward substitution of the linearized grid equation gives
\eqref{eq:matrix-discrete-susceptibility}.  If $K^{L+1}=0$, the binomial series stops
at $L$, giving \eqref{eq:dag-leading-grid-ratio}; coefficientwise comparison of the
two finite sums proves the stated DAG equivalence.  For a Perron vector,
\eqref{eq:matrix-discrete-susceptibility} reduces to \eqref{eq:cycle-grid-ratio}, and
$u-u^2/2\le\log(1+u)\le u-u^2/[2(1+u)]$ gives
\[
 -\frac{x^2}{2N}\le
 \log\left\{\left(1+\frac{x}{N}\right)^Ne^{-x}\right\}
 \le-\frac{x^2}{2N(1+x/N)}.
\]
The bounds are equivalent to $x^2/N\to0$, hence to $N\tau^2\to\infty$; the short
necessity and subsequence arguments for both cases are recorded in the supplement.
\end{proof}

\subsection{Numerical illustrations}\label{subsec:numerics}

All numerical calculations are reproducible.  The Python file
\texttt{sicon\_volterra\_experiments.py} generates every reported number and both figures.
The Monte Carlo calculation uses $100{,}000$
common Gaussian draws with seed $20260718$; all remaining checks are deterministic.
No constant in a theoretical curve is fitted to the output.

For \cref{fig:feedback-graph}, $d=6$, $T=1$, $\sigma=0.8$, and $\mu_0$ is uniform on
$[-1,1]$, so $v=1/3$.  The chain matrix has $K_{i+1,i}=1$ for $i=1,\ldots,5$ and all
other entries zero.  The cyclic matrix adds $K_{1,6}=1$.

\begin{figure}[t]
\centering
\includegraphics[width=.98\textwidth]{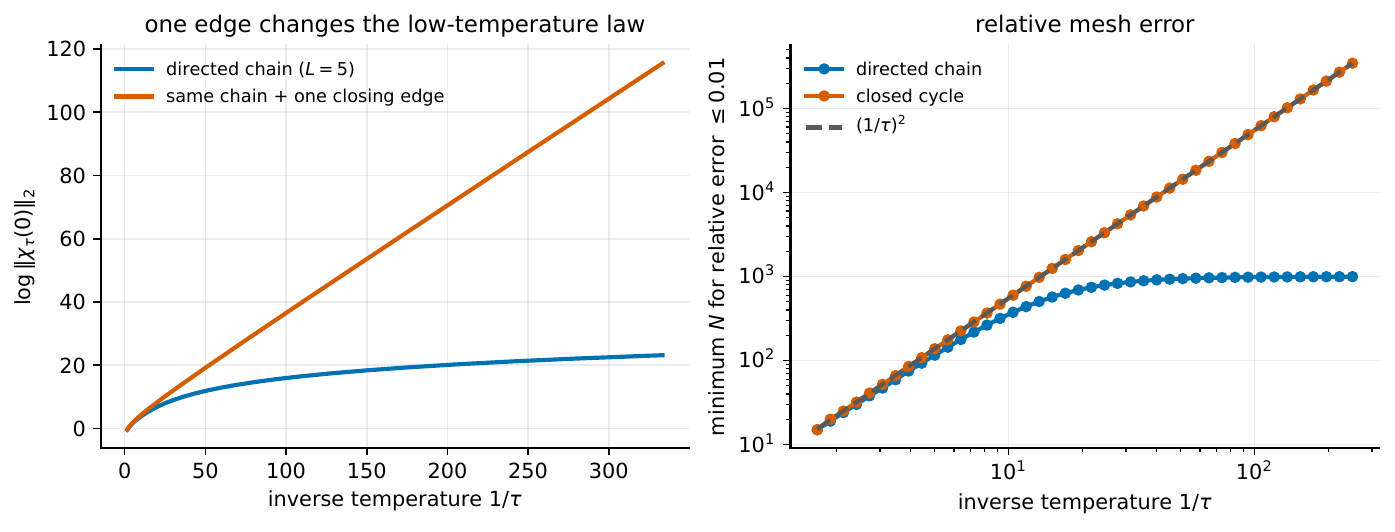}
\caption{Susceptibility and grid cost for a six-node chain and the cycle formed by adding
one closing edge.  Left: $\log\|\chi_\tau(0)\|_2$ is asymptotically linear in
$\log(1/\tau)$ for the chain and in $1/\tau$ for the cycle.  Right: the smallest $N$ for
which the relative response error is at most $0.01$.  The chain uses the full matrix
$2$-norm error in \eqref{eq:matrix-relative-mesh-error}, while the cycle uses the
Perron-mode response error in \eqref{eq:cycle-grid-ratio}.  The chain cost approaches
a temperature-independent level, whereas the cyclic Perron cost scales as
$\tau^{-2}$.}
\label{fig:feedback-graph}
\end{figure}

For \cref{fig:statistical-phase}, $\beta=T=1$, $Z\sim N(0,1)$, and
$\widehat\xi_n=Z/\sqrt n$.  The same Gaussian draws are used across the three
temperature schedules at each $n$.

At the largest simulated size $n=2^{26}$, the mean absolute time-zero magnetizations
are $0.0503$, $0.5229$, and $0.9738$ in the stable, critical, and selection schedules.
At $t=T/2$ they are $0.0048$, $0.0233$, and $0.1465$, consistent with the fixed-time
convergence in \cref{thm:root-n-phase}.  At $n=10^{14}$, the relative errors of the
leading and two-term logarithmic approximations in
\eqref{eq:critical-temperature-approximations} are $16.2\%$ and $1.32\%$.

\begin{figure}[t]
\centering
\includegraphics[width=.84\textwidth]{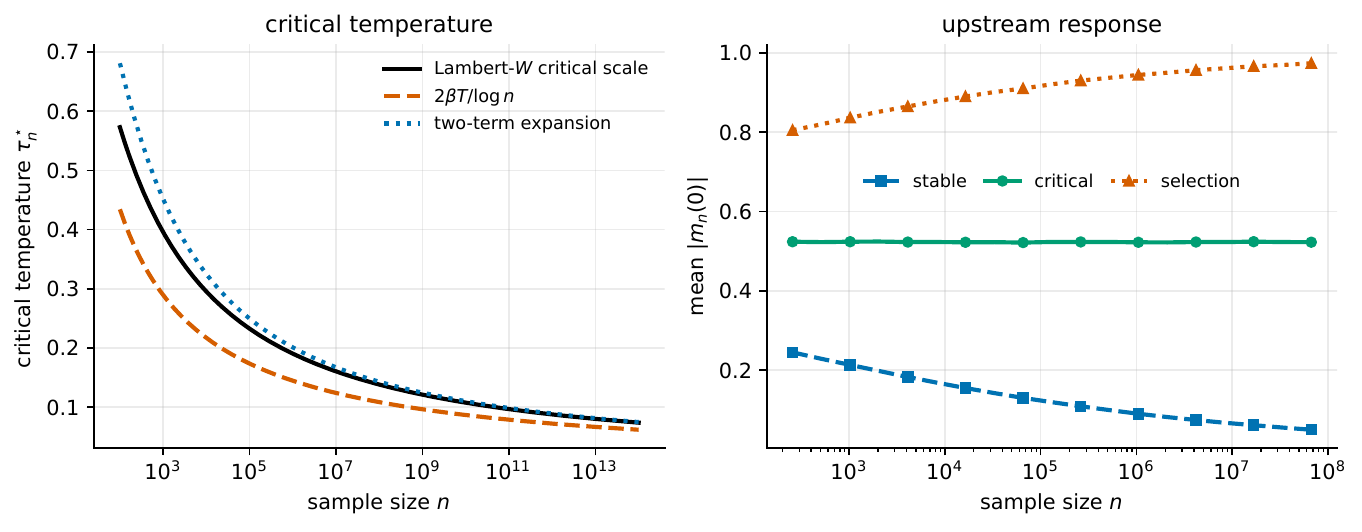}
\caption{Critical-temperature transition.  Left: the exact Lambert-$W$ scale and the
approximations in \eqref{eq:critical-temperature-approximations}.  Right: Monte Carlo
estimates of $\E|m_{\tau_n}^{\widehat\xi_n}(0)|$ for
$\tau_n/\tau_n^\star\in\{1.5,1,0.7\}$.}
\label{fig:statistical-phase}
\end{figure}

\section{Discussion}\label{sec:conclusion}

On a DAG, a perturbation passes through only finitely many future selves.  Under aligned
forcing, a positive cycle permits repeated returns and can change polynomial
susceptibility into exponential susceptibility.  The bounded diffusion realizes both
rates in arbitrary finite dimension.  It also shows why a grid that is accurate at fixed
temperature may fail under cooling: along a cyclic Perron mode, relative consistency requires
$N\tau^2\to\infty$.

Two limits of the present theory remain.  The general parabolic argument gives
$e^{C/\tau^2}$, whereas the explicit graph model attains $e^{C/\tau}$; whether state
dependence can attain the larger rate is open.  The statistical result assumes
finite-dimensional model estimation and fixed positive temperature, except in the
exactly solvable affine model.  Action-dependent diffusion, nonsmooth estimators, and
graph criteria for state-dependent equilibria remain unresolved.

\section*{Declaration of generative-AI assistance}
OpenAI ChatGPT/Codex (July 2026) assisted with literature searches, drafting, editing,
mathematical checks, and numerical code.  The author assumes responsibility for all
content.

\section*{Code availability}
The accompanying reproducibility archive contains the seeded Python code and numerical
output used to produce both figures and every reported numerical value.

\noindent\textbf{Supplementary material.}
The supplement contains the detailed technical closures and classical bootstrap used
in Sections~4--6.

\bibliographystyle{siamplain}
\bibliography{references}

\end{document}


\maketitle

This supplement records the entropy cancellation, precise strong norms,
bounded-drift and source-difference closures, second-order mild remainder,
fixed-temperature scope conditions, bounded-model envelope, mesh-stiffness closure,
and classical bootstrap used in the main paper.
We use the notation of Section~4 of the main paper, in particular
$\mathfrak V$, $\mathfrak V_\eta$, $\mathcal E_{\theta,\tau}$,
$P^0_{t,s}$, and $\mathsf T$.  All constants in this supplement are
fixed-temperature constants unless stated otherwise.

\section{Positive-axis Mittag--Leffler growth}

For every $\alpha>0$,
\begin{equation}\label{eq:supp-mittag-log-growth}
 \log E_\alpha(z)\sim z^{1/\alpha},\qquad z\to\infty,\quad z>0.
\end{equation}
Write $z=y^\alpha$, set
$a_m(y)=y^{\alpha m}/\Gamma(\alpha m+1)$, and put
$r=\alpha m/y$.  If $m_y=\lfloor y/\alpha\rfloor$, Stirling's formula gives
\[
 y^{-1}\log a_{m_y}(y)\longrightarrow1.
\]
Since all terms are positive, this proves the required lower bound for
$y^{-1}\log E_\alpha(y^\alpha)$.

For the upper bound, the two-sided Stirling inequalities imply, uniformly when
$\alpha m\ge1$,
\[
 \log a_m(y)\le y\phi(r)+C\log(1+\alpha m),
 \qquad \phi(r):=r(1-\log r)\le1.
\]
Fix $R>1$ for the moment.  There are $O_R(y)$ indices with
$\alpha m\le Ry$, so their sum is at most
$\exp\{y+O_R(\log y)\}$.  To control the remaining infinite tail, use the
fixed-$\alpha$ gamma-ratio bound
\[
 \frac{a_{m+1}(y)}{a_m(y)}
 =y^\alpha\frac{\Gamma(\alpha m+1)}
                    {\Gamma(\alpha m+\alpha+1)}
 \le C_\alpha\left(\frac{y}{\alpha m}
              \right)^\alpha .
\]
Choose $R$ so large that the last expression is at most $1/2$ whenever
$\alpha m\ge Ry$.  The tail is then at most twice its first term, hence also
$\exp\{y+O_R(\log y)\}$.  Therefore
\[
 \limsup_{y\to\infty}y^{-1}\log E_\alpha(y^\alpha)\le1.
\]
Together with the lower bound this proves \eqref{eq:supp-mittag-log-growth}.

\section{Short-time spike estimate}

Under the hypotheses of Proposition~2.1 in the main paper, continuity uniformly in the
action makes the spike coefficients bounded on a neighborhood of $(t,x)$.  Let
$\theta_R$ be the first exit of $X^{\epsilon,\rho}$ from the ball of radius $R$ about
$x$.  The Burkholder--Davis--Gundy inequality and the stopped SDE give
\[
 \mathbb E\sup_{t\le s\le(t+\epsilon)\wedge\theta_R}
 |X_s^{\epsilon,\rho}-x|^2\le C_R(\epsilon+\epsilon^2).
\]
For any fixed $\eta<R$, continuity of the path and Chebyshev's inequality therefore
show that the event for the unstopped path equals the corresponding event for the
stopped path, and hence
$\mathbb P(\sup_{t\le s\le t+\epsilon}|X_s^{\epsilon,\rho}-x|>\eta)\to0$.
This is the short-time concentration used before applying right-continuity and Vitali's
theorem to the uniformly integrable spike integrand.

\section{Entropy cancellation}

Define the log-partition Hamiltonian
\begin{equation}\label{eq:supp-log-partition}
 H_\tau(q)=\tau\log\int_\A e^{q(a)/\tau}\mu(da).
\end{equation}
If $\pi=\cG_\tau(q)$, then
\begin{equation}\label{eq:supp-gibbs-value}
 H_\tau(q)=\int q\,d\pi-\tau\KL(\pi\Vert\mu),
 \qquad DH_\tau(q)[h]=\int h\,d\pi.
\end{equation}
In a discounted policy-evaluation equation, take
$q(a)=b(a)\cdot Z+r(x,t,x,a)$ and write
$R_y=\int r(y,t,x,a)\pi(da)$ and $R_x=\int r(x,t,x,a)\pi(da)$.
Here $D_xV$ is the flow-state derivative of
$V^1(\vartheta,t,y,x)$ with $(\vartheta,y)$ frozen.  Then
\begin{align}
 &B\cdot D_xV+\delta(t-\vartheta)
 \{R_y-\tau\KL(\pi\Vert\mu)\}\notag\\
 &\qquad=B\cdot\{D_xV-\delta(t-\vartheta)Z\}
 +\delta(t-\vartheta)\{H_\tau(q)+R_y-R_x\}.          \label{eq:supp-entropy-cancellation}
\end{align}
The Hamiltonian is Lipschitz in $q$ with a temperature-independent constant.  The
off-diagonal difference $R_y-R_x$ has a bounded integrand under the data assumption
of the main paper.  This proves the source estimate used in its stability proof.
Differentiating
\eqref{eq:supp-gibbs-value} also shows why no derivative of the normalizing constant
is missing from the forced tangent equation.

For completeness, here is the cancelled mild evaluation map used in the main paper.
For a candidate $V$, let $\pi_{\theta,V}=\cG_\tau(q_{\theta,V})$ and set
\begin{align*}
 B_{\theta,V}&=\int_\A b_\theta(a)\pi_{\theta,V}(da),\\
 R_{\theta,y,V}&=\int_\A r_\theta(y,\cdot,a)\pi_{\theta,V}(da),\\
 \Phi^1_{\theta,V}
 &=\delta(t-\vartheta)\{H_\tau(q_{\theta,V})
   +R_{\theta,y,V}-R_{\theta,x,V}-B_{\theta,V}\cdot Z_V\}.
\end{align*}
Here $R_{\theta,x,V}$ means that the frozen reward state is set equal to the current
state.  With flow variables suppressed inside the integrands,
$U=\mathcal E_{\theta,\tau}(V)$ is equivalently determined by
\begin{align}
 U^1(\vartheta,t,y,\cdot)
 ={}&P^0_{t,T}F_\theta(\vartheta,y,\cdot)
 +\int_t^TP^0_{t,s}
 \{B_{\theta,V}\cdot D_xU^1+\Phi^1_{\theta,V}\}(s)\,ds,
                                                        \label{eq:supp-mild-E1}\\
 U^2(t,\cdot)
 ={}&P^0_{t,T}h_\theta
 +\int_t^TP^0_{t,s}
 \{B_{\theta,V}(s)\cdot D_xU^2(s,\cdot)\}\,ds.       \label{eq:supp-mild-E2}
\end{align}
Equations \eqref{eq:supp-gibbs-value}--\eqref{eq:supp-entropy-cancellation} prove
that fixed points of this map are exactly the original Gibbs-substituted evaluation
system.

\section{Strong coefficient and value norms}

For a metric space $(S,\rho)$ and a Banach space $E$, write
$[H]_{\gamma;\rho;E}:=\sup_{z\ne z'}
\|H(z)-H(z')\|_E/\rho(z,z')^\gamma$.  With
\begin{align*}
 \rho_r((y,t,x),(y',t',x'))
 &:=|y-y'|+|t-t'|^{1/2}+|x-x'|,\\
 \rho_{\rm ref}((\vartheta,y),(\vartheta',y'))
 &:=|\vartheta-\vartheta'|^{1/2}+|y-y'|,
\end{align*}
and $F^\sharp(\vartheta,y)=F(\vartheta,y,\cdot)$, the classical coefficient norm
used in the main paper is
\begin{align}
 \|M\|_{\mathfrak C^{\rm cl}_\gamma}
 :={}&\|M\|_{\mathfrak C^1}
 +\sup_a\|b(\cdot,\cdot,a)\|_{C_b^{\gamma/2,1+\gamma}(\mathcal Q)}\notag\\
 &+\sup_a\bigl([r(\cdot,\cdot,\cdot,a)]_{\gamma;\rho_r;\R}
   +[D_xr(\cdot,\cdot,\cdot,a)]_{\gamma;\rho_r;\R^d}\bigr)\notag\\
 &+\sup_{\vartheta,y}\|F^\sharp(\vartheta,y)\|_{C_b^{2+\gamma}(\R^d)}
 +[F^\sharp]_{\gamma;\rho_{\rm ref};C_b^{2+\gamma}(\R^d)}
 +\|h\|_{C_b^{2+\gamma}(\R^d)}.                       \label{eq:supp-classical-coefficient-space}
\end{align}

For $\mathbf z=(\vartheta,t,y,x)$ and $\mathbf z'$ in $\mathcal D_1$, put
\[
 \rho_*(\mathbf z,\mathbf z')=|t-t'|^{1/2}+|x-x'|
  +|\vartheta-\vartheta'|^{1/2}+|y-y'|,
\]
and let $[\cdot]_{\eta,*}$ be the corresponding bounded H\"older seminorm.  The
strong value norm is
\begin{align}
 \|V^1\|_{\mathfrak V^1_\eta}:={}&
 \sup_{\vartheta,y}
 \|V^1(\vartheta,\cdot,y,\cdot)\|_
 {C_b^{1+\eta/2,\,2+\eta}([\vartheta,T]\times\R^d)}
 +[V^1]_{\eta,*}+[D_xV^1]_{\eta,*},\notag\\
 \|V^2\|_{\mathfrak V^2_\eta}:={}&
 \|V^2\|_{C_b^{1+\eta/2,\,2+\eta}(\mathcal Q)}.       \label{eq:supp-strong-value-space}
\end{align}
The terminal-time norms use the one-sided parabolic metric.  Completeness follows
from completeness of the component H\"older spaces.  On diagonal points,
$\rho_*$ is at most twice the parabolic distance on $\mathcal Q$, and hence
\begin{equation}
 \|\mathsf T V^1\|_{C_b^{\eta/2,\eta}(\mathcal Q)}
 \le C\|V^1\|_{\mathfrak V^1_\eta}.                  \label{eq:supp-diagonal-trace}
\end{equation}

\section{Bounded-drift and stability closures}

For the bounded-drift lemma in the main paper, Duhamel's formula relative to
$P^0$ is
\[
 u(t)=P^0_{t,T}g+
 \int_t^TP^0_{t,s}\{c(s)\cdot D_xu(s)+f(s)\}\,ds.
\]
If $e(t)=\|u(t)\|_\infty+\|D_xu(t)\|_\infty$ and
$k(r)=1+r^{-1/2}$, the base semigroup bounds give
\[
 e(t)\le C\|g\|_{\mathcal B_x^1}
 +C\int_t^Tk(s-t)\|f(s)\|_\infty\,ds
 +CB_c\int_t^Tk(s-t)e(s)\,ds.
\]
On $[0,T]$, $k(r)\le C_Tr^{-1/2}$.  After absorbing the normalization of the
right-sided half integral into $C$, its iterates satisfy
\[
 \big\|(CB_cI_{T-}^{1/2})^m\big\|_{\infty\to\infty}
 \le \frac{(CB_c)^mT^{m/2}}{\Gamma(m/2+1)}.
\]
The series converges, proving the stated estimate and uniqueness.  Short-interval
contraction followed by backward continuation gives existence.

For the learned-model stability theorem, let $E(t)$ and $\varepsilon$ have the
meaning used in its proof and write $\Delta$ for a difference between the two
equilibria.  Directly from the action averages and
\eqref{eq:supp-gibbs-value},
\begin{align}
 \|\Delta B\|_\infty&\le C\{\varepsilon+d_{{\rm pol},t}\},\notag\\
 \|\Delta H_\tau\|_\infty&\le C\{\varepsilon+E(t)\},\notag\\
 \|\Delta\Phi^1\|_\infty
 &\le C\{\varepsilon+E(t)+d_{{\rm pol},t}\}.          \label{eq:supp-source-differences}
\end{align}
The constants use the common comparison envelope.  Keep one induced drift as the
principal coefficient in the difference equation.  The remaining drift forcing is
$\Delta B\cdot D_x\widetilde V$ and is controlled by the first line of
\eqref{eq:supp-source-differences}.  The bounded-drift estimate, followed by the
Gibbs policy bound in the main proof, gives its displayed half-order Volterra
inequality.  No spatial derivative of an induced drift is used.

\section{Fixed-temperature evaluation remainders}

At fixed temperature, direct differentiation gives
\begin{align}
 D\cG_\tau(q)[k](da)
 &=\frac{\pi_q(da)}\tau\{k(a)-\pi_qk\},                 \label{eq:supp-Gibbs-D1}\\
 D^2\cG_\tau(q)[k,\ell](da)
 &=\frac{\pi_q(da)}{\tau^2}
 \{(k(a)-\pi_qk)(\ell(a)-\pi_q\ell)
       -\operatorname{Cov}_{\pi_q}(k,\ell)\}.          \label{eq:supp-Gibbs-D2}
\end{align}
Thus $\|D\cG_\tau(q)[k]\|_{\rm TV}\le2\tau^{-1}\|k\|_\infty$ and
$\|D^2\cG_\tau(q)[k,\ell]\|_{\rm TV}
\le8\tau^{-2}\|k\|_\infty\|\ell\|_\infty$.  Also
$D^2H_\tau(q)[k,\ell]=\tau^{-1}\operatorname{Cov}_{\pi_q}(k,\ell)$.

\begin{proposition}[Mild evaluation remainders]
\label{prop:supp-parabolic-remainder}
Under the data and diffusion assumption of the main paper, fix $\tau>0$ and a $C^2$
coefficient family.  Write
$z=(\theta,V)$, equip $\R^p\times\mathfrak V$ with its product norm, and set
$U(z)=\mathcal E_{\theta,\tau}(V)$.  On a sufficiently small convex neighborhood of a
base point, there are $C_\tau<\infty$ and a modulus
$\omega_\tau(r)\downarrow0$ such that, with
$\mathcal Z=\R^p\times\mathfrak V$,
\begin{align}
 \|U(z+\zeta)-U(z)\|_{\mathfrak V}
 &\le C_\tau\|\zeta\|,                                      \label{eq:supp-E-lipschitz}\\
 \|U(z+\zeta)-U(z)-DU(z)[\zeta]\|_{\mathfrak V}
 &\le C_\tau\|\zeta\|^2,                                  \label{eq:supp-E-quadratic}\\
 \|DU(z+\zeta)-DU(z)\|_{\mathcal L(\mathcal Z,\mathfrak V)}
 &\le C_\tau\|\zeta\|,                                    \label{eq:supp-DE-lipschitz}\\
 \|DU(z+\zeta)-DU(z)-D^2U(z)[\zeta,\cdot]\|_
       {\mathcal L(\mathcal Z,\mathfrak V)}
 &\le\omega_\tau(\|\zeta\|)\|\zeta\|,                  \label{eq:supp-D2-remainder}\\
 \|D^2U(z+\zeta)-D^2U(z)\|_{\mathcal L^2(\mathcal Z;\mathfrak V)}
 &\le\omega_\tau(\|\zeta\|).                            \label{eq:supp-D2-continuity}
\end{align}
Here $\mathcal L^2(\mathcal Z;\mathfrak V)$ denotes bounded bilinear maps from
$\mathcal Z\times\mathcal Z$ to $\mathfrak V$.  No constant or modulus is asserted
to be uniform as $\tau\downarrow0$.
\end{proposition}

\begin{proof}
For either value component, write the mild equation as
\[
 U_z(t)=P^0_{t,T}g_z+\int_t^TP^0_{t,s}
       \{c_z\cdot D_xU_z+f_z\}(s)\,ds,
\]
where $c_z=B_{\theta,V}$ and $f_z$ is the relevant cancelled source.  Equations
\eqref{eq:supp-Gibbs-D1}--\eqref{eq:supp-Gibbs-D2} and the bounded diagonal trace show
that $(c_z,f_z,g_z)$ is $C^2$ in the required supremum norms.
Indeed, for $Z_V=\mathsf T V^1+G_z(V^2)D_xV^2$ and value directions
$W,\widehat W$,
\begin{align*}
 DZ_V[W]={}&\mathsf T W^1
 +G_{zz}(V^2)W^2D_xV^2+G_z(V^2)D_xW^2,\\
 D^2Z_V[W,\widehat W]={}&G_{zzz}(V^2)W^2\widehat W^2D_xV^2\\
 &+G_{zz}(V^2)
 \{W^2D_x\widehat W^2+\widehat W^2D_xW^2\}.
\end{align*}
The fixed-temperature Gibbs formulas, multiplication in the supremum spaces,
and integration against bounded action sections now give the asserted $C^2$
coefficient maps.  Uniform continuity of $G_{zzz}$ on bounded value ranges makes
their second derivatives continuous.

For directions $h_i$, put $c_i=Dc_z[h_i]$, $c_{12}=D^2c_z[h_1,h_2]$, and use the
same notation for $f$ and $g$.  The first and second variations are the unique mild
solutions
\begin{align}
 W_i(t)={}&P^0_{t,T}g_i+\int_t^TP^0_{t,s}
 \{c_z\cdot D_xW_i+c_i\cdot D_xU_z+f_i\}(s)\,ds,
                                                        \label{eq:supp-first-variation}\\
 Y_{12}(t)={}&P^0_{t,T}g_{12}+\int_t^TP^0_{t,s}
 \{c_z\cdot D_xY_{12}+c_{12}\cdot D_xU_z\notag\\
 &\hspace{8em}+c_1\cdot D_xW_2+c_2\cdot D_xW_1+f_{12}\}(s)\,ds.
                                                        \label{eq:supp-second-variation}
\end{align}
On the local input neighborhood,
$\|c_z\|_\infty\le\sup_a\|b(\cdot,a)\|_\infty$.
The bounded-drift estimate from the main paper therefore applies to all these
equations with one common principal-resolvent constant and no additional
temperature factor.
The bounded-drift estimate gives
$\|W_i\|_{\mathfrak V}\le C_\tau\|h_i\|$ and
$\|Y_{12}\|_{\mathfrak V}\le C_\tau\|h_1\|\|h_2\|$.

Let $\Delta U=U(z+\zeta)-U(z)$ and $W=DU(z)[\zeta]$.  Keeping $c_{z+\zeta}$ in the
principal resolvent, the remainder $R=\Delta U-W$ has terminal value
$g_{z+\zeta}-g_z-Dg_z\zeta$ and source
\begin{equation}\label{eq:supp-first-remainder-source}
 \{c_{z+\zeta}-c_z-Dc_z\zeta\}\cdot D_xU_z
 +(c_{z+\zeta}-c_z)\cdot D_xW
 +f_{z+\zeta}-f_z-Df_z\zeta.
\end{equation}
It is $O(\|\zeta\|^2)$, which proves
\eqref{eq:supp-E-lipschitz}--\eqref{eq:supp-E-quadratic}.  Subtracting the tangent
equations at $z+\zeta$ and $z$ gives \eqref{eq:supp-DE-lipschitz}.

For a unit direction $k$, put
$\mathcal R_\zeta[k]=DU(z+\zeta)[k]-DU(z)[k]-Y_{\zeta,k}$ and keep
$c_{z+\zeta}$ in the principal resolvent.  Its forcing is, up to the common sign in
the backward mild convention,
\begin{align}
 &\{Dc_{z+\zeta}-Dc_z-D^2c_z[\zeta,\cdot]\}[k]\cdot D_xU_z\notag\\
 &\quad+\{Dc_{z+\zeta}-Dc_z\}[k]\cdot D_x\Delta U
   +Dc_z[k]\cdot D_x(\Delta U-W)\notag\\
 &\quad+\{c_{z+\zeta}-c_z-Dc_z[\zeta]\}\cdot D_xDU(z)[k]
   +(c_{z+\zeta}-c_z)\cdot D_xY_{\zeta,k}\notag\\
 &\quad+\{Df_{z+\zeta}-Df_z-D^2f_z[\zeta,\cdot]\}[k].
                                                        \label{eq:supp-derivative-source}
\end{align}
The terminal value is
\[
 Dg_{z+\zeta}[k]-Dg_z[k]-D^2g_z[\zeta,k].
\]
After enlarging the modulus, its norm is at most
$\omega_\tau(\|\zeta\|)\|\zeta\|\|k\|$.  Continuity of the second coefficient
derivatives and the first two estimates give the same bound for the forcing in
\eqref{eq:supp-derivative-source}.  The bounded-drift estimate of the main paper then
proves
\eqref{eq:supp-D2-remainder}.  Subtracting the two second-variation equations at
$z+\zeta$ and $z$, continuity of $D^2(c,f,g)$ and the same bounded-drift estimate give
\eqref{eq:supp-D2-continuity}.  We may therefore define
$D^2U(z)[h_1,h_2]=Y_{12}$; the last estimate proves that this second Fr\'echet
derivative is continuous.
\end{proof}

\section{Fixed-temperature scope and a state-dependent base class}

The open class in the main paper contains explicit state-dependent controlled
perturbations.  Let $\A=[-1,1]$ with normalized Lebesgue measure,
$\sigma\equiv I_d$, take bounded $C_b^{3+\gamma}$ functions, and set
\begin{align*}
 b_\theta(t,x,a)&=\bar b(t,x)+\theta_2a\,\beta(t,x),\\
 r_\theta(y,t,x,a)&=\bar r(y,t,x)+\theta_1a\,\rho(y,t,x).
\end{align*}
If $\rho(x,t,x)$ is not spatially constant, then
\[
 D_{\theta_1}\pi_0(t,x,da)
 =\tau^{-1}a\rho(x,t,x)\mu(da)
\]
is state dependent, while the $\theta_2$ direction perturbs the controlled drift.
At the symmetric base point, its first-order averaged-drift derivative vanishes
because $\int_\A a\,\mu(da)=0$.  Constant diffusion and bounded smooth data directly
verify all clauses of the data assumption in the main paper, so no external EEHJB
existence theorem is used.

The implicit branch is a fixed-temperature statement.  After shrinking its local
neighborhood $U_\tau$, twice differentiability gives a constant $C_\tau<\infty$ with
\begin{equation}\label{eq:supp-quadratic-equilibrium-remainder}
 \|\Psi_\tau(\theta_0+v)-\Psi_\tau(\theta_0)
   -D\Psi_\tau(\theta_0)[v]\|
 \le C_\tau\|v\|^2.
\end{equation}
Consequently, a triangular array $\tau_n\downarrow0$ requires, in addition to
$\widehat\theta_n-\theta_0=O_{\mathbb P}(n^{-1/2})$,
\[
 \mathbb P(\widehat\theta_n\in U_{\tau_n})\longrightarrow1,
 \qquad C_{\tau_n}/\sqrt n\longrightarrow0.
\]
No bound on $C_\tau$ near zero follows from the fixed-temperature theorem.  Similarly,
a numerical equilibrium error $o_{\mathbb P}(n^{-1/2})$ preserves the limit law by
Slutsky's theorem, but a computable stopping rule requires a separate a posteriori
error estimate.

\section{Envelope verification for the bounded realization}

For the trigonometric model in Section~5 of the main paper, the uncontrolled
diffusion matrix is $\sigma^2I_d$, $|m_i|\le1$, and the uncontrolled evolution is
the heat semigroup.  The primitives are bounded trigonometric functions with bounded
derivatives of every order, and the induced drift is spatially constant.  With
$M(t,s)=\int_t^s m(u)\,du$, Gaussian convolution gives
\begin{align*}
 V^1(y;t,x)={}&\boldsymbol\varepsilon^\top e^{-\nu(T-t)}
     \sin\{x-y+M(t,T)\}\\
 &+\int_t^T e^{-\nu(s-t)}
     \sin\{x-y+M(t,s)\}^\top Km(s)\,ds\\
 &-\tau\int_t^T\KL(\pi_s\Vert\mu)\,ds,
 \qquad V^2=0.
\end{align*}
All flow-state and reference derivatives of this expression are bounded uniformly
for bounded $(K,\boldsymbol\varepsilon)$ and temperatures in the comparison set.
No such uniformity is claimed for parameter derivatives.  Moreover,
\[
 \tau\KL(\pi_t\Vert\mu)\le\osc_a(a^\top Z(t)),
 \qquad |Z(t)|\le C(|\boldsymbol\varepsilon|+T\|K\|),
\]
so the entropy contribution has the same uniform bound.  These estimates give the
comparison envelope in the main theorem.  At each fixed temperature the coefficient
maps are $C^\infty$, and the mild calculus follows from the heat-semigroup estimates.

For the policy-norm comparison in the graph corollary, each coordinate mean is a
bounded linear statistic, so
$|\int a_i\,d\dot\pi|\le\|\dot\pi\|_{\rm TV}$.  At the tied product law,
\[
 \frac{d\dot\pi[c]}{d\mu}(a)
 =\frac1v\sum_{i=1}^d a_i(\chi_\tau(t)c)_i.
\]
Therefore
\[
 \|\chi_\tau(t)\|_{\infty\to\infty}
 \le\left\|D_{\boldsymbol\varepsilon}\pi_t\big|_0
       \right\|_{\ell^\infty\to{\rm TV}}
 \le\frac d v\|\chi_\tau(t)\|_{\infty\to\infty}.
\]
This proves that the policy derivative and mean susceptibility have the same
polynomial order in the acyclic case and the same exponential rate in the cyclic case.

\section{Mesh-stiffness closure}

Use the notation of Proposition~6.1 in the main paper and put
$q_{N,r}=r!\binom Nr/N^r$.  If $N\to\infty$, then
$\max_{0\le r\le L}|q_{N,r}-1|\to0$, and coefficientwise comparison of the two
finite nilpotent sums gives $\mathfrak e_{\tau,N}\to0$ without coupling $N$ to
$\tau$.  Conversely, if $N\not\to\infty$, a subsequence has bounded integer $N$;
on a further subsequence it is constant, and the leading $K^L$ coefficient gives
$\mathfrak e_{\tau,N}\to1-q_{N,L}>0$.  When $L\le1$, the discrete and continuous
finite series are identical.

For the cyclic Perron mode, the logarithmic bounds in the main proof show that
$x^2/N\to0$ is sufficient.  Necessity follows because convergence of the response
ratio to one forces
\[
 \frac{x^2}{N(1+x/N)}\longrightarrow0.
\]
If $x/N$ failed to vanish, then along a subsequence $x/N\ge c>0$, and the last
quantity would be at least $cx/(1+c)$, which diverges as $x\to\infty$.
Thus $x/N\to0$, after which the displayed quantity is asymptotic to $x^2/N$.
Since $x=v\rho(K)T/\tau$, this is exactly $N\tau^2\to\infty$.

\section{Classical bootstrap}

\begin{proof}[Proof of the classical-regularity proposition in the main paper]
Let $V$ be a mild fixed point and fix $\eta\in(0,\gamma)$.  Constants below may depend
on the fixed temperature and on the local coefficient bounds.  The mild identity on
$[t,t']$, together with
$\|P^0_{t,t'}f-f\|_\infty\le C|t'-t|^{1/2}\|f\|_{\mathcal B_x^1}$,
gives, in the standard sup-norm time seminorm,
\begin{align}
 &\sup_{\vartheta,y}
 [V^1(\vartheta,\cdot,y,\cdot)]_{C_t^{1/2}C_x^0}
 +[V^2]_{C_t^{1/2}C_x^0}
 \le C_\tau(1+\|V\|_{\mathfrak V}).                  \label{eq:supp-value-time-holder}
\end{align}
Here $[u]_{C_t^{1/2}C_x^0}:=\sup_{t<t'}
\|u(t,\cdot)-u(t',\cdot)\|_\infty/|t'-t|^{1/2}$ on the relevant time interval.
For the base diffusion $X^{t,x}$, boundedness of its coefficient gives
\[
 \|P^0_{t,t'}f-f\|_\infty
 \le\|D_xf\|_\infty\sup_x\mathbb E|X_{t'}^{t,x}-x|
 \le C|t'-t|^{1/2}\|f\|_{\mathcal B_x^1}.
\]
The source integral on $[t,t']$ is $O(|t'-t|)$.

The singularity in the gradient estimates is $(s-t)^{-(1+\eta)/2}$, which is
integrable because $\eta<1$.  To record the time increment, write
$\mathcal F_V=B\cdot D_xV+\Phi$ for either bounded mild source.  For $t<t'$,
\begin{align*}
 D_xV(t)-D_xV(t')={}&
 \{D_xP^0_{t,T}-D_xP^0_{t',T}\}V(T)\\
 &+\int_t^{t'}D_xP^0_{t,s}\mathcal F_V(s)\,ds\\
 &+\int_{t'}^T\{D_xP^0_{t,s}-D_xP^0_{t',s}\}
                  \mathcal F_V(s)\,ds .
\end{align*}
The terminal term is controlled by the standard base linear Schauder estimate invoked
in Assumption~4.1(i) of the main paper; see
\cite[Sections~8.9--8.10]{Krylov1996}.  The first
integral is $O(|t-t'|^{1/2})$, and the assumed time-increment bound for the base
evolution controls the second by $C|t-t'|^{\eta/2}$.  Its spatial gradient bound
gives the corresponding spatial estimate.  Applying these bounds to both mild
equations gives
\begin{align}
 &\sup_{\vartheta,y}
 [D_xV^1(\vartheta,\cdot,y,\cdot)]_{C_b^{\eta/2,\eta}}
 +[D_xV^2]_{C_b^{\eta/2,\eta}}
 \le C_\tau(1+\|V\|_{\mathfrak V}).                    \label{eq:supp-flow-holder}
\end{align}
The seminorm in the first term is taken on
$[\vartheta,T]\times\R^d$ with the one-sided parabolic metric.
The terminal contributions are controlled up to $T$ by that base linear Schauder
estimate, using $F,h\in C_b^{2+\gamma}$; the singular gradient bound is needed only
for the source integral.

Take two frozen pairs $(\vartheta,y)$ and $(\vartheta',y')$.  Subtract their mild
equations on the common triangular domain
$t\ge\vartheta\vee\vartheta'$.  The coefficient differences are controlled by the
$\mathfrak C^{\rm cl}_\gamma$ norm.  Since $\eta<\gamma$, the terminal and bounded
source differences are $O(\rho_{\rm ref}^\eta)$.  The bounded-drift estimate yields
\begin{align}
 &\sup_{\substack{(\vartheta,y)\ne(\vartheta',y')\\
                  t\ge\vartheta\vee\vartheta'}}
 \frac{\|V^1(\vartheta,t,y,\cdot)
          -V^1(\vartheta',t,y',\cdot)\|_{\mathcal B_x^1}}
 {\rho_{\rm ref}((\vartheta,y),(\vartheta',y'))^\eta}
 \le C_\tau(1+\|V\|_{\mathfrak V}).                  \label{eq:supp-reference-holder}
\end{align}
For points with different flow times, first use
\eqref{eq:supp-value-time-holder} and \eqref{eq:supp-flow-holder} to move the earlier
point to the later flow time, then apply
\eqref{eq:supp-reference-holder}.  One-sided continuity covers the boundary of the
triangle.

A diagonal increment changes the flow and frozen variables together.  Combining
\eqref{eq:supp-flow-holder} and \eqref{eq:supp-reference-holder} gives
\begin{equation}
 [\mathsf T V^1]_{C_b^{\eta/2,\eta}(\mathcal Q)}
 \le C_\tau(1+\|V\|_{\mathfrak V}).                   \label{eq:supp-diagonal-holder}
\end{equation}
At fixed $\tau$, the first two Gibbs derivatives are bounded on bounded score sets.
The coefficient assumptions and
\eqref{eq:supp-value-time-holder} and \eqref{eq:supp-diagonal-holder} therefore put
the induced drift in $C_b^{\eta/2,\eta}$.  For each frozen pair, the cancelled source
has the same flow regularity.  More precisely, writing
$\Phi^1_{\vartheta,y}$ for that source,
\begin{equation}
 \|B\|_{C_b^{\eta/2,\eta}(\mathcal Q)}
 +\sup_{\vartheta,y}\|\Phi^1_{\vartheta,y}\|_
 {C_b^{\eta/2,\eta}([\vartheta,T]\times\R^d)}
 \le C_\tau(1+\|V\|_{\mathfrak V}).                 \label{eq:supp-source-holder}
\end{equation}

For each frozen pair, whole-space parabolic existence and the global Schauder estimate
give a classical solution $U$ with
\begin{equation}
 \|U\|_{C_b^{1+\eta/2,\,2+\eta}}
 \le C\left\{\|U(T)\|_{C_b^{2+\eta}}
 +\|\partial_tU+\mathcal L_t^0U+B\cdot D_xU\|_
 {C_b^{\eta/2,\eta}}\right\}.                         \label{eq:supp-schauder}
\end{equation}
After reversing time, the whole-space Cauchy problem has bounded uniformly elliptic
$C_b^{\eta/2,\eta}$ diffusion and drift coefficients, a source in the same class,
and terminal data in $C_b^{2+\eta}$.  The existence and estimate therefore follow
from \cite[Sections~8.9--8.10]{Krylov1996}.  The classical solution has the same mild
representation as $V$; uniqueness in the bounded-drift mild estimate gives $U=V$.
The Schauder constant is uniform in the frozen pair because the ellipticity,
coefficient, terminal, and source bounds are uniform.  Combine these uniform flow
norms with \eqref{eq:supp-reference-holder}, using the same two-step comparison as for
the diagonal.  This gives $[V^1]_{\eta,*}$ and $[D_xV^1]_{\eta,*}$ without a Schauder
estimate for frozen-pair differences.  Hence $V\in\mathfrak V_\eta$.  Taking
$\eta<\gamma$ avoids an endpoint trace claim.
\end{proof}

\bibliographystyle{siamplain}
\bibliography{references}